\newtheorem{thm}{Theorem}[section]
\newtheorem{lemma}[thm]{Lemma}
\newtheorem{cor}[thm]{Corollary}
\newtheorem{conj}[thm]{Conjecture} 
\theoremstyle{definition}
\newtheorem{definition}[thm]{Definition}
\theoremstyle{remark}
\newtheorem{remark}[thm]{Remark}
\numberwithin{equation}{section}
\begin{document}

\title{Extreme Nonnegative Quadratics over Stanley-Reisner Varieties}

\author{Kevin Shu}
\address{Department of Mathematics, Georgia Institute of Technology, 
Atlanta, GA}
\email{kshu8@gatech.edu}
\urladdr{www.kevinshu.me}

\begin{abstract}
    We consider the convex geometry of the cone of nonnegative quadratics over Stanley-Reisner varieties.
    Stanley-Reisner varieties (which are unions of coordinate planes) are amongst the simplest real projective varieties, so this is potentially a starting point that can generalize to more complicated real projective varieties.
    This subject has some suprising connections to algebraic topology and category theory, which we exploit heavily in our work.

    These questions are also valuable in applied math, because they directly translate to questions about positive semidefinite (PSD) matrices.
    In particular, this relates to a long line of work concerning the extent to which it is possible to approximately check that a matrix is PSD by checking that some principle submatrices are PSD, or to check if a partial matrix can be approximately completed to full PSD matrix.

    We systematize both these practical and theoretical questions using a framework based on algebraic topology, category theory, and convex geometry.
    As applications of this framework we are able to classify the extreme nonnegative quadratics over many Stanley-Reisner varieties.
    We plan to follow these structural results with a paper that is more focused on quantitative questions about PSD matrix completion, which have applications in sparse semidefinite programming.
\end{abstract}
\maketitle

\section{Introduction}

Positive semidefinite (PSD) matrices, which are symmetric matrices whose eigenvalues are all nonnegative, are of fundamental interest in fields ranging from pure mathematics to engineering applications.
It is well known that if a matrix is PSD, then all of its submatrices are PSD, and there has been recent interest in seeking an approximate converse to this fact.
Specifically, the interest is in understanding whether we can assert that a matrix is `close' to being PSD if enough its submatrices are PSD.

Understanding the connection between the eigenvalues of a symmetric matrix to the eigenvalues of its principal submatrices has had a long history, including theorems such as the Cauchy interlacing theorem, and the Schur-Horn theorem.
These theorems form a bridge between convex geometry and spectral properties of matrices, which we seek to extend in this work.
\subsection{Motivation and Existing Results}
We motivate the questions considered in this paper by describing two lines of work from seemingly disparate fields: positive semidefinite matrix completion and the sums-of-squares paradigm in real algebraic geometry.

Early work in this field includes the work of Grone et al. \cite{GRONE1984109} on the PSD matrix completion problem.
In the PSD matrix completion problem, the entries of a symmetric matrix corresponding to the edges of a graph are given, and we want to know if the remaining entries can be chosen to make the resulting matrix PSD.
If the graph is a chordal graph, then a partial matrix can be completed to a PSD matrix if and only if all of the submatrices corresponding to cliques in the graph are PSD.

This work on PSD matrix completion was extended work by Johnson and Laurent \cite{MR1342017, Laurent2009} to consider the instances where the underlying graph is series-parallel using the cycle-completability conditions.

For more general graphs, it is known that this condition that ensuring that all of the submatrices corresponding to cliques are PSD will not guarantee that a partial matrix is PSD completable.
We will consider whether it is possible to conclude that this condition ensures that a partial matrix is `approximately PSD completable'.

We can even ask this approximation question in cases when all of the entries of the matrix are known, and we want to know if ensuring that some submatrices of the matrix are PSD will guarantee that the whole matrix is `approximately' PSD.
Other work in this line includes considering the extent to which it is possible to check that a matrix is PSD by checking that random submatrices  \cite{bakshi2020testing, blekherman2020sparse} are PSD.
A similar question, where we consider symmetric matrices where all $k\times k$ principal submatrices are PSD was considered in \cite{blekherman2020hyperbolic}.
A generalization of this question involving checking PSDness on general linear subspaces, other than coordinate subspaces, was considered in \cite{song2021approximations}, which shows that unless we are in the PSD-matrix completion setting, we would need to consider exponentially many low dimensional projections of a matrix to approximately check if it is PSD.

Relatively recently, this question has been tackled from the perspective of real algebraic geometry, and in particular, to the question of the difference between sums-of-squares and nonnegative quadratic forms on real projective varieties.
For instance, in \cite{free_resolution}, this theorem of Grone was shown to be implied by a theorem of Fr\"{o}berg in \cite{MR1171260} about the Betti numbers of Stanley-Reisner varieties which are chordal.
In \cite{BS2020}, we make this connection more explicit, and describe how the PSD-completable matrices correspond to sums-of-squares quadratic forms, and how matrices with PSD submatrices can be regarded as nonnegative quadratic forms on certain algebraic varieties.
Using these ideas, we make quantitative guarantees about how close a partial matrix where some of the submatrices are PSD is from being PSD completable.

The theory of nonnegative and sums-of-squares polynomials is of fundamental interest in both the theory of real algebraic geometry and in optimization, ever since Hilbert's 1880 result about the existence of nonnegative polynomials which are not sums of squares \cite{MR1510517}.
This connection between real algebraic geometry and positive semidefinite matrices can be thought of as the cornerstone for the theory of Laserre hierarchies for semidefinite programming \cite{SOSChapter}.
In recent study, the theory of sums-of-squares forms and nonnegative polynomials have been extended to quadratic polynomials defined over a real projective variety, as in \cite{Blekherman2015SumsOS}.
Many of earlier results about sums-of-squares over real projective varieties make use of some homological algebraic; we will only make light use of homological algebra in this paper.

\subsection{Our Contributions}
Many of the above results can be framed as questions about the difference between sum-of-squares and nonnegative quadratic forms on Stanley-Reisner varieties of various types.
Most of our results in this paper concern the structure of the convex cone of nonnegative quadratics on general Stanley-Reisner varieties, and in particular, what the extreme rays of these cones are.
These extreme rays will turn out to be related to the geometric structure of the set of subsets of the matrix we demand to be PSD.

We first observe that because being PSD is closed under taking submatrices, we might as well assume that the set of submatrices we take to be PSD is closed under taking subsets.
Such a set, which is downward closed, is called an abstract simplicial complex, and these are fundamental to the study of algebraic topology.

We next observe that there are various maps that can take simplicial complexes to other simplicial complexes.
Then, we apply some category theoretic thinking to show that these maps of simplicial complexes can be systematically turned into maps of the corresponding convex cones. 
In many cases, geometric properties of these simplicial maps translate into geometric properties of the maps of cones.
We consider this especially idea interesting because it might be applicable to the study of nonnegative quadratics over other algebraic varieties.

\begin{figure}[H]
    \centering
    \includegraphics[width=0.7\linewidth]{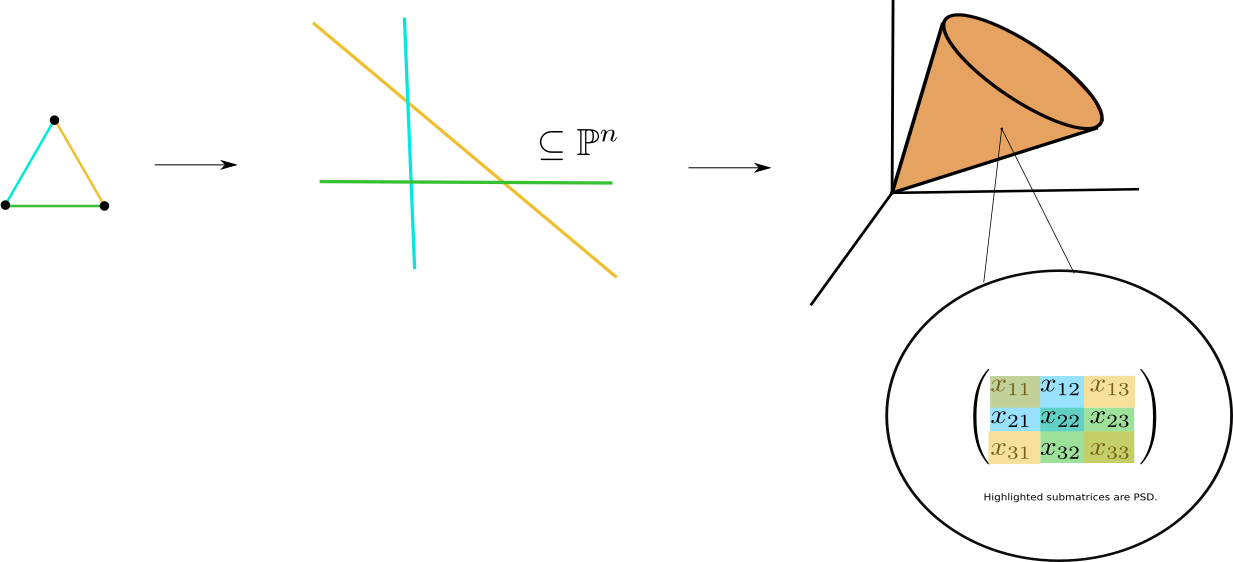}
    \caption{A diagram showing the functors sending a simplicial complex (left) to a Stanley-Reisner variety, and then sending that Stanley-Reisner variety to a convex cone of nonnegative quadratics (right).}
\end{figure}

In particular, it is known that there is a class of simplicial complexes, which we call the chordal complexes, for which nonnegative quadratic forms are all sums-of-squares.
We introduce the idea of expressing a general simplicial complex as a quotient of a chordal complex.
This means that we are able to study the nonnegative quadratic forms associated to a complex in terms of sums-of-squares quadratics on a related chordal complex.

\begin{figure}[H]
    \centering
    \begin{tikzpicture}
        \node[shape=circle,draw=black] (1) at (1,5) {1};
        \node[shape=circle,draw=black] (3) at (2,5) {2};
        \node[shape=circle,draw=black] (5) at (3,5) {3};
        \node (dots) at (3,6) {$\dots$};
        \node[shape=circle,draw=black] (8) at (2,6) {$n$};
        \node[shape=circle,draw=black] (7) at (1,6) {$1^*$};
        \node[shape=circle,draw=black] (9) at (4,5.5) {$4$};

        \path [-](1) edge node[left] {} (3);
        \path [-](3) edge node[left] {} (5);
        \path [-](8) edge node[left] {} (7);
        \path [-](8) edge node[left] {} (dots);
        \path [-](9) edge node[left] {} (dots);
        \path [-](9) edge node[left] {} (5);

        \draw[rounded corners] (0.5, 6.5) rectangle (1.5, 4.5) {};
        \draw [->] (1,4.5) -- (1, 3);

        \node[shape=circle,draw=black] (a1) at (1,2.5) {1};
        \node[shape=circle,draw=black] (a3) at (2,2) {2};
        \node[shape=circle,draw=black] (a5) at (3,2) {3};
        \node (adots) at (3,3) {$\dots$};
        \node[shape=circle,draw=black] (a8) at (2,3) {$n$};
        \node[shape=circle,draw=black] (a9) at (4,2.5) {$4$};

        \path [-](a1) edge node[left] {} (a3);
        \path [-](a3) edge node[left] {} (a5);
        \path [-](a8) edge node[left] {} (a1);
        \path [-](a8) edge node[left] {} (adots);
        \path [-](a9) edge node[left] {} (adots);
        \path [-](a9) edge node[left] {} (a5);
    \end{tikzpicture}
    \caption{An example of a chordal quotient of a path (which is chordal), covering a cycle.}
\end{figure}

We also introduce a notion of rank for nonnegative quadratic forms over Stanley-Reisner varieties, which we call local rank.
This notion of local rank is slightly different than the usual notion of rank for quadratic forms, but it is useful for characterizing how complicated a given nonnegative quadratic is.
We then try to characterize the local ranks of the extreme nonnegative quadratic forms.

We give both upper bounds and lower bounds on the local ranks of extreme nonnegative quadratic forms, which depend on the geometric properties of these simplicial complexes.

Our upper bounds use the chordal quotient idea, and the fact that we can represent a sum-of-squares quadratic form in coordinates as the matrix of inner products of a collection of vectors in a Hilbert space.
That is, every PSD matrix is a Gram matrix.
This is in some sense a categorification of the notion of the notion of a PSD matrix, which allows us to express basic operations like summation in terms of algebraic operations on vectors.
This additional structure will allow us to find decompositions of a given sum-of-squares quadratic which are compatible with a given simplicial map, which will then allow us to decompose any nonnegative quadratic form on a given complex as the sum of low-rank nonnegative quadratics.

Our lower bounds are inspired by homotopy theory. 
We can construct some interesting extreme rays by first finding a map from a given complex to a complex with an extreme ray of high rank, and then pulling this extreme ray back to our starting complex.
As long as this map satisfies some conditions which we call strong connectivity, this procedure produces high rank extreme rays in our starting complex.

We are interested in the case when the local rank of every extreme ray is 1.
There are a rich collection of simplicial complexes which have extreme local rank 1, and the ones we have found have the property that they are homotopy equivalent to a 1-dimensional complex.
This structure is strong enough that it will allow us to obtain quantitative bounds on how well sums-of-squares quadratic forms approximate nonnegative quadratic forms in general, which we will explore in greater detail in a sequel.

\section{Preliminaries}
\subsection{Introduction to Stanley-Reisner Varieties}
We define abstract simplicial complexes, which are a well known combinatorial model for topological spaces used in algebraic topology.
\begin{definition}[Abstract Simplicial Complex]\label{def:abstract_simplicial_complex}
    If $\groundSet$ is a finite set, then $\ASC \subseteq 2^\groundSet$ is a \textbf{simplicial complex} if for each $S \in \ASC$ and each subset $T \subseteq S$, $T \in \ASC$, and also all single element sets in $2^{\groundSet}$ are in $\ASC$.
    We will generally not be concerned with the distinction between an element $x \in E$ and the single element set $\{x\} \in 2^{E}$.
    
    If $S \in \ASC$, we say that $S$ is a \textbf{face} of $\ASC$ of dimension $|S|-1$, and if $S \in \ASC$ is not strictly contained in any faces of $\ASC$, then we call $S$ a \textbf{facet}. We will also refer to elements of $\groundSet$ as being \textbf{vertices} of $\ASC$.
\end{definition}
Any simplicial complex $\ASC$ has a topological realization $|\ASC|$; see \cite{Hatcher} for details.

Given a simplicial complex $\ASC$, we will associate to it a real projective variety, known as the Stanley-Reisner variety of $\ASC$.
See \cite[Chapter 1]{miller2004combinatorial} for details on Stanley-Reisner rings and varieties.
\textbf{Real projective varieties} can be regarded as a subset of real projective space, $\PROJ^n$, defined by the simultaneous vanishing of a collection of homogeneous polynomial equations, or else, in terms of spectra of a graded ring (see \cite{hartshorne2013algebraic}).
 We will denote the projetive variety defined by a homogeneous ideal $\mathcal{I}$ as $\mathcal{V}(\mathcal{I})$.
To define the Stanley-Reisner variety of $\ASC$, we will first define its Stanley-Reisner ideal.
\begin{definition}[Stanley-Reisner Ideal]\label{def:stanley_reisner_ideal}
    If $\ASC$ is a simplicial complex over a ground set $\groundSet$, then the (real) \textbf{Stanley-Reisner ideal} of $\ASC$ is an ideal of $\R[x_i : i \in \groundSet]$ given by
    \[
        \SRI(\ASC) = \langle \prod_{i \in S} x_i : S \subseteq \groundSet, S \not \in \ASC \rangle.
    \]
\end{definition}
The Stanley-Reisner variety is the vanishing locus of the Stanley-Reisner ideal.
\begin{definition}[Stanley-Reisner Variety]\label{def:stanley_reisner_variety}
    If $\ASC$ is a simplicial complex over a ground set $\groundSet$, then the (real) \textbf{Stanley-Reisner variety} of $\ASC$ is a projective variety contained in $\PROJ^{|\groundSet|}$ where
    \[
        \SRV(\ASC) = \mathcal{V}(\SRI(\ASC)) = \{x \in \PROJ^n : \forall f \in \SRI(\ASC), f(x) = 0\}.
    \]
\end{definition}

Equivalently, the Stanley-Reisner variety of $\ASC$ is the union of coordinate planes, which correspond to faces of $\ASC$.
\begin{lemma}\label{lem:coordinate_planes}
\[
    \SRV(\ASC) = \bigcup_{S \in \ASC} \SPAN(\{e_i : i \in S\}),
\]
where $e_i$ denotes the standard basis vector in $\R^{\groundSet}$, and $\SPAN$ denotes the linear span of a set of vectors.
\end{lemma}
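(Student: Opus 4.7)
The plan is to reduce both sides of the asserted equality to a single condition phrased in terms of the \emph{support} of a point $x \in \PROJ^{|\groundSet|}$, which we define as $\mathrm{supp}(x) = \{i \in \groundSet : x_i \neq 0\}$. I claim both the left-hand and right-hand sides of the equality are precisely $\{x \in \PROJ^{|\groundSet|} : \mathrm{supp}(x) \in \ASC\}$, and this will give the result.

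First I would handle $\SRV(\ASC)$. Since the Stanley-Reisner ideal is generated by the squarefree monomials $m_S = \prod_{i \in S} x_i$ for $S \not\in \ASC$, checking vanishing reduces to checking these generators. The monomial $m_S$ vanishes at $x$ if and only if $S \not\subseteq \mathrm{supp}(x)$. Thus $x \in \SRV(\ASC)$ if and only if no non-face of $\ASC$ is contained in $\mathrm{supp}(x)$, equivalently every subset of $\mathrm{supp}(x)$ lies in $\ASC$. By the downward-closed axiom on $\ASC$ (Definition~\ref{def:abstract_simplicial_complex}), this is equivalent to $\mathrm{supp}(x) \in \ASC$ itself.

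Next I would handle the right-hand union. A point $x$ lies in $\SPAN(\{e_i : i \in S\})$ exactly when $\mathrm{supp}(x) \subseteq S$, so $x$ lies in the union if and only if there exists some face $S \in \ASC$ with $\mathrm{supp}(x) \subseteq S$. The forward direction $\mathrm{supp}(x) \in \ASC \Rightarrow x$ in the union is immediate by taking $S = \mathrm{supp}(x)$, while the converse again uses downward closedness. Combining the two characterizations proves the lemma.

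The argument is essentially bookkeeping with supports, and I do not expect a real obstacle. The one small point to mention is that for $x \in \PROJ^{|\groundSet|}$ one has $\mathrm{supp}(x) \neq \emptyset$, so no special treatment of the empty face is needed; the downward-closed axiom does all the work, converting local containment data ("some face contains the support") into global membership data ("the support is itself a face").
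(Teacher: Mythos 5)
Your proof is correct, and it is essentially the standard argument for this well-known fact. The paper does not actually prove this lemma—it is stated as background (with a pointer to Miller and Sturmfels), so there is no in-paper proof to compare against. That said, your support-based reduction, showing that both sides equal the set of points $x$ whose support lies in $\ASC$, with downward closedness of $\ASC$ doing the work in each direction, is exactly how one would prove it, and I see no gap. Your side remark about nonempty supports in projective space is a fair bit of care but not strictly necessary, since the empty face contributes only the zero vector to the right-hand union.
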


Associated to any real projective variety is its graded coordinate ring, whose elements are polynomial functions on the variety.
\begin{definition}[Coordinate Ring]\label{def:coordinate_ring}
    Let $V$ be  a real projective variety, then we will denote the (graded) coordinate ring of $V$ by
    \[
        \CoorR(V) = \R[x_1 ,\dots, x_n] / \IDE(V).
    \]
    We will use $\CoorR_d(V)$ to denote the vector space of degree $d$ homogeneous polynomials in $\CoorR(V)$.
\end{definition}
\subsection{Nonnegative and Sum-of-Squares Quadratics on Stanley Reisner Varieties}
Inside the coordinate ring of any real projective variety, there are two important subsets: the nonnegative polynomials and the sum-of-squares polynomials.
\begin{definition}[Nonnegative Polynomials]\label{def:nonnegative_quadratic}
    If $V$ is a real projective variety, then a homogeneous polynomial of even degree $p \in \CoorR(V)$ is nonnegative if for every point $v \in V$, $p(v) \ge 0$ (while the value $p(v)$ is not technically well defined on projective space, this property is still defined as long as $p$ has even degree).
\end{definition}
\begin{definition}[Sum-of-Squares Polynomials]\label{def:sums_of_squares}
    If $V$ is a real projective variety, then a homogeneous polynomial $p \in \CoorR_{2d}(V)$ is sum-of-squares there exist polynomials $a_1, a_2 ,\dots, a_k \in \CoorR_d(V)$ so that $p = a_1^2 + a_2^2 + \dots + a_k^2$.
\end{definition}

In this paper, we will mostly be interested in nonnegative and sum-of-squares quadratic forms, which lie in $\CoorR_2(V)$.
Some of the ideas described here can be generalized to higher degree polynomials, but our main theorems only apply to quadratic forms.

We will use the notation $\POS(V)$ to denote the set of nonnegative quadratics on $V$, and the notation $\SOS(V)$ to denote the set of sum-of-squares quadratic forms.

$\POS(V)$ and $\SOS(V)$ both have the structure of \textbf{convex cones}; they are closed under addition and nonnegative scalar multiplication. 
If $C$ is a convex cone, $x \in C$ is said to span an \textbf{extreme ray} if there do not exist linearly independent $z, y \in C$ so that $x = z + y$.
More generally, we say that $S \subseteq C$ is a \textbf{face} of $C$ if it is a convex cone, and for any $x \in S$, if $a,b \in C$ so that $a + b = x$, then $a, b \in S$.

It is easy to see that the extreme rays of $\SOS(V)$ are precisely the squares of linear polynomials in $V$. 
Most of this paper will be concerned with the structure of the extreme rays and faces of $\POS(V)$ when $V$ is a Stanley-Reisner variety.

\subsection{Functoriality}
We will make light use of category theory in this paper, as we can express our idea of using combinatorial maps of simplicial complexes to describe linear maps of convex cones using functors.
This idea was described in other language in \cite{laurent2012gram}, in the case when the underlying combinatorial map is a contraction of a simplicial complex along an edge.

So far, we have described three important constructions: the Stanley-Reisner variety of a simplicial complex; the cone of nonnegative quadratics of an algebraic variety, and the cone of sum-of-squares quadratics on an algebraic variety.
We now upgrade these constructions to be functorial.
First, we need to introduce the categories we are working with.

\begin{definition}[$\simp$]\label{def:simp_cat}
    The category of simplicial complexes, denoted $\simp$, has simplicial complexes as objects, and simplicial maps as morphisms.
    A \textbf{simplicial map} $\phi : \ASC \rightarrow \ASCb$ is a function from the ground set of $\ASC$ to the ground set of $\ASCb$ so that the image under $f$ of any face of $\ASC$ is a face of $\ASCb$.
\end{definition}

\begin{definition}[$\lin$]\label{def:lin_cat}
    The category of projective varieties with linear maps, denoted $\lin$, has real projective varieties as objects and linear morphisms of projective varities as morphisms.
\end{definition}

\begin{definition}[$\cone$]\label{def:cone_cat}
    The category of convex cones, denoted $\cone$, has convex cones as objects, and linear maps between convex cones as morphisms.
\end{definition}

It is clear that the operation $\mathcal{V}$ takes objects in $\simp$ to objects in $\lin$, and the operations $\POS$ and $\SOS$ take objects in $\lin$ to objects in $\cone$.
We now describe how these operations act on morphisms.

Given a simplicial map $\phi : \ASC \rightarrow \ASCb$, we let
\[
    \mathcal{V}(\phi) : \mathcal{V}(\ASC) \rightarrow \mathcal{V}(\ASCb)
\]
be the linear map of varieties which sends the coordinate vector $e_i$ to the coordinate vector $e_{\phi(i)}$.
It is easy to see from lemma \ref{lem:coordinate_planes} that $\mathcal{V}(\phi)$ in fact takes $\mathcal{V}(\ASC)$ to $\mathcal{V}(\ASCb)$, so that $\mathcal{V}$ is in fact a covariant functor.

Given a linear map between real projective varieties, $\phi : V_1 \rightarrow V_2$, we can define a map of convex cones $\POS(\phi) : \POS(V_2) \rightarrow \POS(V_1)$, given by pulling back nonnegative quadratic forms on $V_2$ to $V_1$ by $\phi$.
That is, we send the quadratic form $q \in \POS(V_2)$ to the quadratic form whose value at $x \in V_1$ is $q(\phi(x))$.
It is clear then that the pullback of a nonnegative quadratic form is a nonnegative quadratic form.
So, $\POS$ can be regarded as a contravariant functor from $\lin$ to $\cone$.

For any simplicial complex $\ASC$, we will write $\POS(\ASC)$ as shorthand for $\POS(\mathcal{V}(\ASC))$. 
For a map of simplicial complexes $\phi$, we will denote by $\phi^*$ the linear map $\POS(\mathcal{V}(\phi))$.

Similarly, the $\SOS$ construction can also be thought of as a contravariant functor from $\lin$ to $\cone$. 
Our aim at this point is to descibe natural maps of simplicial complexes which give rise to nice maps of convex cones.

\subsection{Coordinates for $\CoorR_2(\mathcal{V}(\ASC))$ and Partial Matrices}
We will work a lot in coordinates for this paper; in particular, we will need to exploit the connection between quadratic forms over $\mathcal{V}(\ASC)$ and partial matrices.

To define these coordinates, we will first need a combinatorial definition,
\begin{definition}[$k$-Skeleton]\label{def:k_skeleton}
    \textbf{The $k$ skeleton} of an abstract simplicial complex $\ASC$ is the abstract simplicial complex defined by 
    \[
        \skel k \ASC = \{S \in \ASC : |S| \le k+1\}.
    \]
    The \textbf{strict} $k$ skeleton of an abstract simplicial complex $\ASC$ is defined as 
    \[
        \stskel k \ASC = \{S \in \ASC : |S| = k+1\}.
    \]
\end{definition}

We can see from the definition that $\CoorR_2(\mathcal{V}(\ASC))$ is spanned by the monomials $x_ix_j$, where $\{i,j\} \in \skel 1 \ASC$.
For any $q \in \CoorR_2(\mathcal{V}(\ASC))$, there is a unique way of writing
\[
    q = \sum_{\{i,j\} \in \skel 1 \ASC} X_{i,j} x_ix_j,
\]
and conversely, for any collection of coefficients $X_{i,j} \in \R$, there is a quadratic form $q = \sum_{\{i,j\} \in \skel 1 \ASC} X_{i,j} x_i x_j$.
(Note that we will use the notation $\{i,j\}$ even when $i = j$, in which case, $X_{i,j} = X_{i,i} = X_{j,j}$).

We can naturally format the coefficients $X_{i,j}$ into a \textbf{partial matrix}, $X$, where the rows and columns are indexed by elements of $\VER{\ASC}$.
This is a matrix where if $\{i,j\} \not \in \skel 1 \ASC$, then the entry $X_{i,j}$ is `forgotten', since the corresponding monomial $x_ix_j$ vanishes on $\mathcal{V}(\ASC)$.
We will say that the partial matrix $X$ \textbf{represents} $q$.

\subsubsection{Partial Matrices and Sum-of-Squares}
If $q$ is sum-of-squares, $q = \ell_1^2 + \ell_2^2+\dots+\ell_k^2$ for linear forms $\ell_i$, then by extending the $\ell_i$ to all of $\PROJ^n$ by linearity, there is an extension of $q$ to a sum-of-squares polynomial on $\PROJ^n$.
That is, there exists some $\hat{q} \in \CoorR_2(\PROJ^n)$, so that
\[
    \hat{q} = \sum_{i,j \le n} \hat{X}_{i,j} x_ix_j,
\]
where, $\hat{X}_{i,j} = X_{i,j}$ for $\{i,j\} \in \skel 1 \ASC$. 

Clearly, $\hat{X}$ can be formatted into a matrix. It is well known that the quadratic form $\hat{q}$ is sum-of-squares (or equivalently, nonnegative) on $\PROJ^n$ if and only if $\hat{X}$ is PSD.

Another way of saying this is that $q$ is sum-of-squares on $\mathcal{V}(\ASC)$ if and only if the partial matrix $X$ has a \textbf{completion} $\hat{X}$ so that $\hat{X}$ is PSD.
We see that there is a clear connection between PSD-completable partial matrices and sum-of-squares quadratic forms on $\mathcal{V}(\ASC)$.

\subsubsection{Partial Matrices and Nonnegativity}
We also need a coordinate-based characterization of nonnegativity.
Say that $q \in \CoorR_2(\mathcal{V}(\ASC))$, and that $X$ is a partial matrix representing $q$.
$q$ is nonnegative if and only if for every face $F \in \ASC$, $q$ is nonnegative on the coordinate subspace $\SPAN \{e_i : i \in F\}$.
Put another way, if we let $2^F$ be the simplicial complex consisting of all subsets of $F$, we obtain a natural inclusion map $\phi_F : 2^F \rightarrow \ASC$, and $q$ is nonnegative if and only if $\phi_F^*(q)$ is nonnegative for all $F \in \ASC$.

It is not hard to see in these coordinates that $\phi_F^*(q)$ is precisely a quadratic form represented by 
\[\phi_F^*(q) = \sum_{i,j \in F} X_{i,j} x_i x_j.\]
Now, $\mathcal{V}(2^F) = \PROJ^{|F|}$, so in particular, $\phi_F^*(q)$ is nonnegative if and only if the matrix representing $\phi_F^*(q)$ is PSD.

We will denote the matrix representing $\phi_F^*(q)$ by $X|_F$, and we notice that $X|_F$ can be regarded as a submatrix of $X$.
So, we have that $q$ is nonnegative if and only if the submatrices $X|_F$ are PSD for each $F \in \ASC$.

\subsubsection{Hadamard Products}
Our first application of this coordinate-based notion is the definition of a binary operation which preserves both $\POS(\ASC)$ and $\SOS(\ASC)$, the \textbf{Hadamard product}, also known as the entry-wise product.
The Hadamard product of partial matrices $X$ and $Y$ is $X \star Y$, where we have that
\[
    (X \had Y)_{i,j} = X_{i,j}Y_{i,j},
\]
for any $\{i,j\} \in \skel 1 \ASC$.

The fact that this preserves both $\POS(\ASC)$ and $\SOS(\ASC)$ follows from the Schur product theorem (see \cite{horn2013matrix}).
This operation turns $\POS(\ASC)$ and $\SOS(\ASC)$ into a semiring under addition and Hadamard product.

\subsection{An Example}
Consider the 1-dimensional simplicial complex with 4-vertices, which we can depict as a 4-cycle:

\begin{figure}[H]
\centering
\begin{subfigure}{0.5\textwidth}
  \centering
    \begin{tikzpicture}

        \node[shape=circle,draw=black] (a1) at (2,3) {1};
        \node[shape=circle,draw=black] (a2) at (2,2) {2};
        \node[shape=circle,draw=black] (a3) at (3,2) {3};
        \node[shape=circle,draw=black] (a4) at (3,3) {4};

        \path [-](a1) edge node[left] {} (a2);
        \path [-](a2) edge node[left] {} (a3);
        \path [-](a3) edge node[left] {} (a4);
        \path [-](a4) edge node[left] {} (a1);
    \end{tikzpicture}
  \caption{The 4-cycle graph $C_4$.}
  \label{fig:sub1}
\end{subfigure}%
\begin{subfigure}{0.5\textwidth}
  \centering
\[
\begin{pmatrix}
1 & 1 & 1 & 1\\
1 & 1 & 1 & 1\\
1 & 1 & 1 & 1\\
1 & 1 & 1 & 1\\
\end{pmatrix}\rightarrow
\begin{pmatrix}
1 & 1 & ? & 1\\
1 & 1 & 1 & ?\\
? & 1 & 1 & 1\\
1 & ? & 1 & 1\\
\end{pmatrix}
\]
  \caption{A partially specified matrix corresponding to $C_4$. The diagonal entries correspond to vertices of $C_4$, and the specified off-diagonal entries correspond to edges in $\EDG{C_4}$. $?$ corresponds to an unknown entry.}
  \label{fig:sub2}
\end{subfigure}
\caption{The partially specified matrix here represents the quadratic form $\sum_i x_i^2 + 2\sum_{\{i,j\}\in E(C_4)}x_ix_j$ on the variety $\mathcal{V}(C_4)$. There are many quadratic forms on $\mathcal{V}(2^{[4]})$ which restrict to the same form on $\mathcal{V}(C_n)$, for instance $\left(\sum_i x_i\right)^2$, which is a sum-of-squares completion of this form.}
\label{fig:partial}
\end{figure}
The natural inclusion $C_4 \rightarrow 2^{[4]}$ induces a projection from the sum-of-squares cone on $2^{[4]}$ to that of $C_4$.
Because $\mathcal{V}(2^{[4]})$ is isomorphic to $\PROJ^4$, we see that a sum-of-squares quadratic on $\mathcal{V}(2^{[4]})$ can be identified with a symmetric PSD matrix using coordinates.

The projection from $\SOS(2^{[4]})$ to $\SOS(C_4)$ is given by `forgetting' the entries corresponding to nonedges of a symmetric matrix.
Since $\skel 1 {{C_4}}$ has exactly 2 fewer elements than $\skel 1 2^{[4]}$, we see that we forget 2 pairs of entries in partial matrices over $C_4$.
A partial matrix represents a sum-of-squares quadratic if and only if it can be completed to a complete PSD matrix.

On the other hand, the following matrix corresponds to an element of $\POS(\ASC)$, since all of its $2\times 2$ complete submatrices are PSD, but it is not in $\SOS(\ASC)$ because it has no PSD completion.
\begin{figure}[H]
\[
    \begin{pmatrix}
        \textbf{1} & \textbf{-1} & ? & 1\\
        \textbf{-1} & \textbf{1} & 1 & ?\\
        ? & 1 & 1 & 1\\
        1 & ? & 1 & 1
    \end{pmatrix}
\]
\caption{A partial matrix with no PSD completion. The bolded $2\times 2$ submatrix is complete, and corresponds to the face $\{1,2\} \in C_4$.}
\end{figure}

\subsection{Clique Complexes and Chordal Complexes}
If $G$ is a 1-dimensional simplicial complex (or equivalently, a graph), then there is a maximal simplicial complex  $\ASC$ so that $\skel 1 \ASC = G$, which we call the \textbf{clique complex} of $G$.

Explicitly, if $G$ is a graph, then the clique complex of $G$ is the complex whose faces are subsets of the vertices of $G$ which induce a clique in $G$.
If we think of $\skel 1 \ASC $ as being a functor from $\simp$ to the category of graphs, then the clique complex functor is a right adjoint.

Such complexes appear in many places in the matrix completion literature.
Indeed, another way of stating the classic result of Grone et al. in \cite{GRONE1984109} is that $\POS(\ASC) = \SOS(\ASC)$ if and only if $\ASC$ is the clique complex of a chordal graph.
A chordal graph is a graph with no induced cycles of size at least 4.

To emphasize the importance of chordal graphs in this work, we will make a definition.
\begin{definition}\label{def:chordal}
    A simplicial complex is said to be \textbf{chordal} if it is the clique complex of a chordal graph.
\end{definition}
\begin{thm}\label{thm:nonnegative_quads_chordal}
    (Grone et al.) If $\ASC$ is a simplicial complex, then $\POS(\ASC) = \SOS(\ASC)$ if and only if $\ASC$ is chordal.
\end{thm}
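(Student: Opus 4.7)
The plan is to prove the two directions separately, drawing on the partial-matrix characterizations developed above.

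For the ``if'' direction (chordal $\Rightarrow \POS(\ASC) = \SOS(\ASC)$), I would induct on $|\groundSet|$, exploiting the existence of a simplicial vertex in every chordal graph. Let $G = \skel 1 \ASC$ and let $v \in \groundSet$ be a simplicial vertex, meaning its neighborhood $N(v)$ induces a clique in $G$. Because $\ASC$ is the clique complex of $G$, the set $\{v\} \cup N(v)$ is a face of $\ASC$. Given $q \in \POS(\ASC)$ represented by a partial matrix $X$, the submatrix $X|_{\{v\} \cup N(v)}$ is PSD. The deletion $\ASC \setminus v$ is again chordal, so by induction the restriction of $X$ to $\groundSet \setminus \{v\}$ admits a PSD completion $\widehat Y$. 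I would then extend $\widehat Y$ to a full completion $\widehat X$ by filling in the row/column for $v$: for $u \in N(v)$ use the already-specified $X_{v,u}$, and for $u \notin N(v)$ choose $\widehat X_{v,u}$ via the Schur-complement formula that forces the $v$-row to lie in the row space of the $N(v)$ block of $\widehat Y$. The Schur complement of $\widehat X$ with respect to the $\{v\}\cup N(v)$ block then equals a submatrix of $\widehat Y$, so $\widehat X$ is PSD.

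For the ``only if'' direction, I would argue by contrapositive. Suppose $\ASC$ is not chordal. Then either (a) $\ASC$ is not a clique complex, so there is a minimal non-face $\sigma \subseteq \groundSet$ of size $\geq 3$ all of whose proper subsets are faces, or (b) $\ASC$ is the clique complex of a non-chordal 1-skeleton, so $\skel 1 \ASC$ contains an induced cycle $C$ of length $k \geq 4$ whose vertex set is not a face of $\ASC$. In both cases I exhibit a partial matrix $X$ on the offending sub-structure which has PSD principal submatrices on every face of $\ASC$ but admits no PSD completion. For case (b) (exactly as in the $C_4$ example from the excerpt), set $X_{ii}=1$ on the cycle, label the edges of $C$ by $+1$ except for one edge labelled $-1$, and set every other entry (including diagonals outside $C$) to $0$. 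Each $2\times 2$ submatrix on an edge is PSD, but any PSD completion would force the product of edge values around the cycle to be positive, contradicting the single $-1$. Case (a) is handled by an analogous construction on the non-face $\sigma$.

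The main obstacle is the ``only if'' direction, specifically verifying that the extension-by-zero actually produces a matrix whose restriction $X|_F$ is PSD for every face $F$ of $\ASC$, not merely for faces contained in the local obstruction. The key observation is that because $\sigma$ (resp.\ the vertex set of $C$) is by assumption not a face of $\ASC$, every face $F$ contains only a proper subset of that structure; hence $X|_F$ splits as a PSD block coming from the local example plus blocks that are identically zero, and is therefore PSD. The impossibility of completing such an $X$ to a full PSD matrix on the ambient $\PROJ^n$ then follows from the classical cycle / minimal-non-face obstruction in the PSD completion literature, pulled back through the inclusion of the sub-structure.
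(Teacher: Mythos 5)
The paper does not actually prove this theorem: it is stated and attributed to Grone et al.\ \cite{GRONE1984109}, and only the direction ``chordal $\Rightarrow \POS(\ASC)=\SOS(\ASC)$'' is implicitly reproven later, in the unnamed lemma at the start of Section~4.4, which establishes the stronger fact that $\locr(q)=\rank q$ for chordal $\ASC$ via a simplicial-vertex induction phrased entirely in terms of Gram vectors. Your ``if'' direction follows the same induction but packages the fill-in step as a Schur complement. This is essentially equivalent, but the Schur-complement phrasing needs more care when the $N(v)$ block $X_{NN}$ is singular: you must either use a pseudoinverse together with the fact that $X_{Nv}$ lies in the column space of $X_{NN}$ (which follows from $X|_{\{v\}\cup N(v)}\succeq 0$), or do what the paper does and extend the Gram vector for $v$ by an orthogonal transformation. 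Also, the sentence claiming the Schur complement of $\widehat X$ with respect to the $\{v\}\cup N(v)$ block ``equals a submatrix of $\widehat Y$'' is not literally true---a Schur complement is not a submatrix---what you actually get, after choosing $\widehat X_{vR}=X_{vN}X_{NN}^{\dagger}\widehat Y_{NR}$, is that the Schur complement of $\widehat X$ with respect to $X_{NN}$ is block diagonal with one block the Schur complement of $X|_{\{v\}\cup N(v)}$ and the other that of $\widehat Y$, each PSD. Your ``only if'' direction is genuine additional content not in the paper and is sound. The case distinction is exactly right (a minimal non-face of size $\geq 3$, or an induced cycle of length $\geq 4$ when $\ASC$ is a clique complex), and the extension-by-zero respects nonnegativity precisely because no face of $\ASC$ contains the whole offending set. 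For case~(a) you should name the matrix: on a minimal non-face of size $k\geq 3$, take diagonal $1$ and constant off-diagonal $-a$ with $1/(k-1)<a\leq 1/(k-2)$; then every $(k-1)\times(k-1)$ principal submatrix is PSD but the full $k\times k$ block is not (for $k=3$ the matrix $2I-J$ works). Since $\sigma$ is a clique in $\skel 1 \ASC$, all its pairwise entries are specified, so any completion restricted to $\sigma$ is forced and not PSD.
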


One advantage of our category theoretic framework is that we can try to leverage Theorem \ref{thm:nonnegative_quads_chordal} more extensively by writing a more general simplicial complex as the image of a chordal complex under a simplicial map, and then applying our category theoretic machinery.
To make this precise, we make a definition.
\begin{definition}\label{def:chordal_quotient}
    A simplicial map $\phi : \ASCb \rightarrow \ASC$ is said to be a \textbf{chordal quotient} if $\ASCb$ is a chordal complex, and $\phi$ is surjective in the sense that for every face $S \in \ASC$, there is some face $T \in \ASCb$ so that $\phi(T) = S$.
\end{definition}
One important note is that if $\phi$ is a surjective simplicial map, then $\phi^*$ is injective.

\subsection{Local Rank}
One last ingredient we will need to state our results is that of the local rank of a nonnegative quadratic form over a Stanley-Reisner variety.

Firstly, recall that the rank of a sum-of-squares form $q$, $\rank(q)$, is the smallest number $k$ so that there exists $k$ linear forms $\ell_1 ,\dots, \ell_k$ so that $q = \ell_1^2  +\dots+ \ell_k^2$.

\begin{definition}
    For $F \in \ASC$, we will denote the inclusion map $i_F : 2^{F} \rightarrow \ASC$. For any $q \in \POS(\ASC)$, we define the \textbf{local rank}
    \[
        \locr(q) = \max \{\rank(i^*_F(q)) : F \in \ASC \}.
    \]
\end{definition}
Equivalently, if $X$ is a partial matrix representing $q$, $\locr(q) = \max \{\rank X|_F :  F\in \ASC\}$.

One nice thing about this definition is that it agrees with the usual notion of rank for chordal complexes:
\begin{lemma}\label{lem:local_rank_cover}
    Let $q \in \POS(\ASC)$, and let $\phi$ be an arbitrary chordal quotient of $\ASC$. The \textbf{local rank} of $q$ satisfies
    \[
        \locr(q) = \rank(\phi^*(q)).
    \]
\end{lemma}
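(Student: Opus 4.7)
The plan is to translate both sides of the claimed equality into statements about partial matrices, then bridge them with a chordal PSD-completion fact. Let $X$ be the partial matrix over $\ASC$ representing $q$, and $Y$ the partial matrix over $\ASCb$ representing $\phi^*(q)$. Since $\mathcal{V}(\phi)$ sends $e_i \mapsto e_{\phi(i)}$, expanding the pullback of $q = \sum_{j,k} X_{j,k} x_jx_k$ and discarding monomials $y_iy_{i'}$ with $\{i,i'\} \notin \skel 1 \ASCb$ (which vanish on $\mathcal{V}(\ASCb)$) gives
\[
    Y_{i,i'} \;=\; X_{\phi(i),\phi(i')} \qquad \text{for every } \{i,i'\} \in \skel 1 \ASCb.
\]
For any face $T \in \ASCb$, the submatrix $Y|_T$ is therefore obtained from $X|_{\phi(T)}$ by duplicating rows and columns along the fibers of $\phi|_T \colon T \to \phi(T)$; duplication does not change rank, so $\rank(Y|_T) = \rank(X|_{\phi(T)})$. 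The face-surjectivity built into the chordal quotient hypothesis then yields
\[
    \locr(q) \;=\; \max_{F \in \ASC} \rank(X|_F) \;=\; \max_{T \in \ASCb} \rank(Y|_T).
\]

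Next I would reinterpret $\rank(\phi^*(q))$. Since $\ASCb$ is chordal, Theorem \ref{thm:nonnegative_quads_chordal} gives $\phi^*(q) \in \SOS(\ASCb)$, and the Gram-matrix correspondence recalled in the preliminaries identifies the SOS rank of $\phi^*(q)$ with the minimum rank of a PSD completion of $Y$. The easy inequality $\max_{T \in \ASCb} \rank(Y|_T) \le \rank(\phi^*(q))$ is immediate: given any decomposition $\phi^*(q) = \sum_{a=1}^r \ell_a^2$, pulling back along the inclusion $i_T \colon 2^T \hookrightarrow \ASCb$ produces $\sum_a (i_T^* \ell_a)^2$, a sum-of-squares form on $\mathcal{V}(2^T) = \PROJ^{|T|}$ whose representing matrix is $Y|_T$; on full projective space SOS rank coincides with matrix rank, so $\rank(Y|_T) \le r$.

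The reverse inequality $\rank(\phi^*(q)) \le \max_T \rank(Y|_T)$ is where chordality enters essentially. I would invoke the rank-aware chordal PSD-completion theorem: for a chordal graph, any partial PSD matrix admits a PSD completion whose rank equals the maximum rank among its clique submatrices. This is a rank-aware refinement of Theorem \ref{thm:nonnegative_quads_chordal} and follows by induction along a perfect elimination ordering of $\skel 1 \ASCb$, extending one simplicial vertex at a time so that the newly completed rows lie in the column span of the already-completed clique, which pins the global rank at $\max_T \rank(Y|_T)$. Combining this with the preceding inequality and the first paragraph gives $\rank(\phi^*(q)) = \locr(q)$. The main obstacle is precisely this rank-preserving completion step: the bare existence of a PSD completion from Theorem \ref{thm:nonnegative_quads_chordal} is not sufficient, and the rank control in the induction would either need to be cited from the PSD matrix completion literature or written out carefully along the elimination ordering.
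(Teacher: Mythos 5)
Your proof is correct and takes essentially the same route as the paper. The rank-aware chordal PSD-completion fact you defer is proved in the paper as a standalone auxiliary lemma (stated as $\locr(q) = \rank(q)$ for chordal $\ASC$) by precisely the induction along a perfect elimination ordering you sketch, and the paper then combines it with the same row/column-duplication observation about the fibers of $\phi$.
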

Note that the statement of this lemma makes use of the fact that for a chordal complex, any nonnegative quadratic form is sum-of-squares.
We will defer the proof of this lemma to a later section.

The \textbf{extreme local rank} of a simplicial complex $\ASC$ is the maximum local rank of any extreme ray of $\POS(\ASC)$ which is not in $\SOS(\ASC)$, or 0 if $\ASC$ is chordal.
We will denote the extreme local rank of $\ASC$ by $\elocr(\ASC)$.
Our main interest is going to be in computing the extreme local rank of some classes of simplicial complexes.

\section{Results}
\subsection{Small Quotients of Chordal Graphs}

We first present a structural result in which a complex which is `close' to being chordal has the property that $\POS(\ASC)$ is `close' to $\SOS(\ASC)$, in the sense that all of the non-sum-of-squares extreme rays of $\POS(\ASC)$ are low rank.

The \textbf{chordal deficiency} of a complex $\ASC$ is defined by
\[
    \rho(\ASC) = \min \{|\VER \ASCb| - |\VER \ASC| : \phi : \ASCb \rightarrow \ASC \text{ is a chordal quotient}\}.
\]
Like the \textbf{minimum-fill-in} of a graph (which is the smallest number of edges that need to be added to a graph to make it chordal), this is a measurement of how far a complex is from being chordal.
\begin{thm}[Theorem \ref{thm:small_chordal_quotient}]
    \label{thm:restate_chordal_quotient}
    \[
        \elocr(\ASC) \le \rho(\ASC).
    \]
\end{thm}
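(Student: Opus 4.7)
The plan is to show that any extreme ray $q \in \POS(\ASC) \setminus \SOS(\ASC)$ with $\locr(q) = r > \rho(\ASC)$ admits a nontrivial decomposition in $\POS(\ASC)$, contradicting extremality. Fix a chordal quotient $\phi \colon \ASCb \to \ASC$ realizing the minimum in the definition of $\rho(\ASC)$. By Lemma \ref{lem:local_rank_cover}, $\phi^*(q) \in \SOS(\ASCb)$ has rank exactly $r$, so I would fix a minimum-rank representation $\phi^*(q) = \sum_{a=1}^{r} \ell_a^2$ in which the linear forms $\ell_a \in \R^{\VER \ASCb}$ are linearly independent.

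Since $\phi$ is surjective on vertex sets, $\phi^*$ is injective on linear forms and realizes $\phi^*(\R^{\VER \ASC})$ as a subspace of $\R^{\VER \ASCb}$ of codimension $\rho(\ASC)$. If $r > \rho(\ASC)$, the images of $\ell_1, \dots, \ell_r$ in the quotient $\R^{\VER \ASCb} / \phi^*(\R^{\VER \ASC})$ are linearly dependent, yielding a unit vector $c \in \R^r$ and a nonzero linear form $\tilde\ell$ on $\ASC$ with $\sum_a c_a \ell_a = \phi^*(\tilde\ell)$ (nonzero because the $\ell_a$ are independent and $\phi^*$ is injective).

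The crux is to verify $q - \tilde\ell^2 \in \POS(\ASC)$. I would choose a rank-$r$ PSD completion $\hat Y = U U^T$ of the partial matrix representing $\phi^*(q)$, so its rows $u_i \in \R^r$ satisfy $\langle u_i, u_{i'}\rangle = X_{\phi(i),\phi(i')}$ for every $\{i,i'\} \in \skel 1 \ASCb$. Given a face $F \in \ASC$, surjectivity of $\phi$ furnishes some $T \in \ASCb$ with $\phi(T) = F$, and one can pick $G \subseteq T$ mapping bijectively onto $F$ under $\phi$. Setting $w_j = u_{i_j}$ for the unique $i_j \in G$ above $j$, and letting $M$ be the $r \times |F|$ matrix with columns $w_j$, a direct calculation shows $X|_F = M^T M$ and $\tilde\ell|_F = M^T c$. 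Hence $X|_F - \tilde\ell|_F \tilde\ell|_F^T = M^T(I_r - c c^T) M \succeq 0$, since $\|c\| = 1$ makes $I_r - c c^T$ positive semidefinite.

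With $q - \tilde\ell^2 \in \POS(\ASC)$ and $\tilde\ell^2 \in \SOS(\ASC) \subseteq \POS(\ASC)$, the identity $q = \tilde\ell^2 + (q - \tilde\ell^2)$ decomposes $q$ into a sum of two nonzero elements of $\POS(\ASC)$ (the hypothesis $q \notin \SOS(\ASC)$ rules out $q = \tilde\ell^2$). Extremality forces both summands to be scalar multiples of $q$, making $q$ itself a single square and therefore sum-of-squares, a contradiction. I expect the face-by-face verification that $q - \tilde\ell^2 \in \POS(\ASC)$ to be the main obstacle, since it is precisely where the chordal quotient hypothesis is essential: selecting $G \subseteq T$ compatibly with the Gram factorization of $\phi^*(q)$ is what allows the rank-$r$ SOS witness on $\ASCb$ to descend to a PSD witness on each face of $\ASC$.
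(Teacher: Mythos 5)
Your proof is correct and takes essentially the same route as the paper's: both find a unit vector $c$ (the paper's $\unit$) via the same codimension count $\dim \R^{\VER\ASCb} - \dim \phi^*(\R^{\VER\ASC}) = \rho(\ASC) < \locr(q)$, use it to split off a rank-1 summand, and derive a contradiction with extremality. The only presentational difference is that you verify $q - \tilde\ell^2 \in \POS(\ASC)$ directly, face by face, via the Gram factorization $X|_F - \tilde\ell|_F\tilde\ell|_F^\intercal = M^\intercal(I - cc^\intercal)M \succeq 0$, whereas the paper packages this same computation into its general $(T,L)$-decomposition machinery (Lemma \ref{lem:decompositions}) and then concludes by comparing local ranks of the two summands instead of invoking the non-SOS hypothesis.
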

Notice that if $\ASC$ is chordal, then $\rho(\ASC) = 0$, so in this case, this is just a restatement of the fact that for chordal complexes, $\POS(\ASC) = \SOS(\ASC)$.

We can strengthen this result slightly when the chordal quotient has some additional structure.
\begin{thm}[Theorem \ref{thm:odd_clique}]
    \label{thm:restate_odd_clique}
    Let $\ASCb$ be a chordal complex.
    Let $S_1, S_2 \in \ASCb$ be disjoint faces, so that $|S_1| = |S_2| = k > 1$ and $k$ is odd.
    We will also suppose that there are no vertices with edges to both $S_1$ and $S_2$,
    Let $f : S_1 \rightarrow S_2$ be a bijection.
    Let $\phi : \ASCb \rightarrow \ASC$ be the quotient map obtained by identifying $x$ with $f(x)$ for each $x \in S_1$.
    We can conclude
    \[\elocr(\ASC) \le k - 1.\]
\end{thm}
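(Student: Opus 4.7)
The strategy is to sharpen Theorem~\ref{thm:restate_chordal_quotient}, which applied to this $\phi$ only yields $\elocr(\ASC) \le k$. Suppose toward a contradiction that $q \in \POS(\ASC) \setminus \SOS(\ASC)$ is extreme with $\locr(q) = r \ge k$. Since $\ASCb$ is chordal, $\phi^*(q) \in \SOS(\ASCb)$ with rank $r$ by Lemma~\ref{lem:local_rank_cover}. Fix a minimal sum-of-squares representation and the corresponding Gram vectors $\{v_p\}_{p \in \VER{\ASCb}} \subset \R^r$, which span $\R^r$. The critical constraint is that the $k$-tuples $(v_{a_i})_{i=1}^k$ and $(v_{b_i})_{i=1}^k$ have the same $k \times k$ Gram matrix, as both realize the restriction of the partial matrix $X$ of $q$ to the identified vertices $c_i := \phi(a_i) = \phi(b_i)$.

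The heart of the argument is to find a nonzero $u \in \R^r$ and a sign $\sigma \in \{+1,-1\}$ such that $\langle u, v_{a_i} \rangle = \sigma \langle u, v_{b_i} \rangle$ for every $i$. Let $D = \SPAN\{v_{a_i} - v_{b_i} : i \in S_1\} \subseteq \R^r$, of dimension at most $k$. If $\dim D < r$, any nonzero $u \in D^\perp$ works with $\sigma = 1$; this automatically handles $r > k$, as well as the $r = k$ subcase in which the differences are linearly dependent. Otherwise $\dim D = r = k$, so $D = \R^k$. The common Gram matrix determines an isometry between the spans of the $v_{a_i}$ and the $v_{b_i}$ which extends to an orthogonal transformation $U \colon \R^k \to \R^k$ with $U v_{a_i} = v_{b_i}$. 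Since $k$ is odd, $U$ has a real eigenvalue. A $+1$-eigenvector $u$ would force $\langle u, v_{a_i} \rangle = \langle u, v_{b_i} \rangle$ for all $i$, placing $u \in D^\perp = 0$, a contradiction; hence $U$ has a $-1$-eigenvector, and $\sigma = -1$ works.

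Set $\beta_p := \langle u, v_p \rangle$ and $v_p^\perp := v_p - \beta_p u$, and define a partial matrix $q_1$ on $\ASC$ by $(q_1)_{p,p'} = \beta_p \beta_{p'}$ for $p,p' \in \VER{\ASC} \setminus \{c_i\}$, $(q_1)_{c_i,c_j} = \beta_{a_i} \beta_{a_j}$, and, on mixed pairs, $(q_1)_{p,c_i} = \epsilon_{p,i} \beta_p \beta_{a_i}$, where $\epsilon_{p,i} = 1$ if $p$ is adjacent to $a_i$ in $\ASCb$ and $\epsilon_{p,i} = \sigma$ if $p$ is adjacent to $b_i$. The hypothesis that no vertex is adjacent to both $S_1$ and $S_2$, combined with $k > 1$, rules out $S_1$-$S_2$ edges and forces every face of $\ASC$ to lift to $\ASCb$ either entirely through $S_1$ or entirely through $S_2$; consequently $q_1|_F$ is a rank-one Gram matrix on every face $F$, and $q_2 := q - q_1$ restricted to $F$ is the Gram matrix (in $u^\perp \cong \R^{r-1}$) of the collection of vectors $v_p^\perp$ and $v_{a_i}^\perp$ (or $v_{b_i}^\perp$) indexed by the chosen lift, hence PSD with $\locr(q_2) \le r - 1$. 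Since $u \ne 0$ and $\SPAN\{v_p\} = \R^r$, some $\beta_p \ne 0$ and so $q_1 \ne 0$; and because $\locr(q_1) \le 1 < k \le r$, $q_1$ cannot be a scalar multiple of $q$. This contradicts extremality, establishing $\locr(q) \le k-1$. The most delicate step is the sign bookkeeping in the $\sigma = -1$ case: one must verify that the definition of $\epsilon_{p,i}$ makes $q_1|_F$ and $q_2|_F$ rank-one and PSD Gram matrices simultaneously on faces of both lift types, and this consistency is precisely what the structural hypotheses on $S_1$, $S_2$, and $f$ secure.
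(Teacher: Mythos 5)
Your proof is correct and follows the paper's argument closely: extend the isometry identifying the two $k$-tuples to an orthogonal transformation, use the parity of $k$ to obtain a real eigenvector, and decompose along the corresponding one-dimensional subspace, relying on the no-common-neighbors hypothesis to push the decomposition down to $\ASC$. The differences are stylistic rather than substantive — you write out the resulting decomposition explicitly in coordinates with the sign bookkeeping $\epsilon_{p,i}$, whereas the paper invokes its abstract $(T,L)$-decomposition machinery (Lemma~\ref{lem:decompositions}); and the paper works uniformly with either eigenvalue $\lambda \in \{\pm 1\}$ via $\lambda^2 = 1$, so your extra case split on $\dim D$ and the argument ruling out $+1$-eigenvectors, while correct, are not needed.
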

Notice that the map here $\phi : \ASCb \rightarrow \ASC$ is a chordal quotient, and in particular, the previous theorem implies that $\rho(\ASC) \le k$, so this conclusion is slightly stronger.
It is perhaps surprising that this parity constraint on $k$ appears in the conditions of this theorem, but it is due to the fact that odd dimensional real vector spaces, any linear automorphism has a real eigenvector.

\subsection{Operations Preserving Local Rank}
One hallmark of a good definition is that it should be preserved by at least some natural operations.
Here, we will give two somewhat natural operations that preserve local rank.
\subsubsection{Cones}
We say that a vertex is a \textbf{cone vertex} of $\ASC$ if it is contained in every facet of $\ASC$.
Given a complex $\ASC$, the \textbf{cone} over $\ASC$ is defined as 
\[
    \hat{\ASC} = \{S \subseteq \VER{\ASC} \cup \{*\} : S \in \ASC \text{ or }S - * \in \ASC\}.
\]
Clearly, $*$ is a cone vertex in $\hat{\ASC}$.
We will see that the cone operation does not significantly impact the structure of the cone $\POS(\ASC)$.

\begin{thm}[Theorem \ref{thm:extremeRayCone}]
    Every extreme ray of $q \in \POS(\hat{\ASC})$ is either sum-of-squares, or it satisfies $q(e_*) = 0$.
\end{thm}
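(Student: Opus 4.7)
My plan is a Schur-complement argument. If $q(e_*) = 0$ there is nothing to show, so assume $c := q(e_*) > 0$. Let $X$ be the partial matrix representing $q$. Since $*$ is a cone vertex of $\hat{\ASC}$, every $i \in \VER{\ASC}$ satisfies $\{i,*\} \in \skel 1 \hat{\ASC}$, so the entries $X_{*,i}$ are all specified and I may define the linear form
\[
    \ell = x_* + \sum_{i \in \VER{\ASC}} \frac{X_{*,i}}{c}\, x_i \in \CoorR_1(\mathcal{V}(\hat{\ASC})).
\]
Then $c\ell^2 \in \SOS(\hat{\ASC})$, and my goal is to show that $r := q - c\ell^2$ lies in $\POS(\hat{\ASC})$; once this is established, the conclusion follows quickly.

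A direct expansion shows that the partial matrix $Y$ representing $r$ has $Y_{*,*} = 0$, $Y_{*,i} = 0$ for each $i \in \VER{\ASC}$, and $Y_{i,j} = X_{i,j} - X_{*,i} X_{*,j}/c$ for $\{i,j\} \in \skel 1 \ASC$. Verifying $r \in \POS(\hat{\ASC})$ reduces, by the face-wise characterization of nonnegativity, to checking that $Y|_F$ is PSD for every $F \in \hat{\ASC}$. The facets of $\hat{\ASC}$ have the form $F \cup \{*\}$ with $F \in \ASC$, and because $F \cup \{*\}$ is a face of $\hat{\ASC}$ the submatrix $X|_{F \cup \{*\}}$ is \emph{fully specified} and PSD. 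The classical Schur complement of $X|_{F \cup \{*\}}$ with respect to its $(*,*)$ entry equals $Y|_F$, so $Y|_F$ is PSD; since $Y$ vanishes identically on the $*$ row and column, $Y|_{F \cup \{*\}}$ is PSD as well, and smaller faces inherit nonnegativity.

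To finish, observe that $(\ell^2)(e_*) = 1$ while $r(e_*) = 0$, so $\ell^2$ and $r$ are linearly independent whenever $r \neq 0$. If $q$ spans an extreme ray of $\POS(\hat{\ASC})$, then the decomposition $q = c\ell^2 + r$ into elements of $\POS(\hat{\ASC})$ forces $r = 0$, whence $q = c\ell^2 \in \SOS(\hat{\ASC})$. The only substantive step is the Schur-complement calculation; the essential structural input is that $*$ being a cone vertex makes every $X|_{F \cup \{*\}}$ a fully specified PSD matrix, so that the usual Schur complement is available rather than a partial-matrix analogue.
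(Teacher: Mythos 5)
Your proof is correct and takes essentially the same approach as the paper: you both peel off the same rank-one sum-of-squares piece (your $c\ell^2$ is exactly the paper's $Z$ with $Z_{i,j} = X_{*,*}^{-1}X_{i,*}X_{*,j}$), verify nonnegativity of the remainder face-by-face via the Schur complement, and conclude from extremality together with the fact that the remainder vanishes at $e_*$ while the rank-one piece does not. Your observation that the submatrices $X|_{F\cup\{*\}}$ are fully specified because $*$ is a cone vertex is the same structural point the paper relies on.
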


The set of points $C = \{q \in \POS(\hat{\ASC}) : q(e_*) = 0\}$ is a face of $\POS(\hat{\ASC})$ (this is because if $q = q_1 + q_2$, where these are all nonnegative, and $q(e_*) = 0$, then $q_1(e_*) = q_2(e_*) = 0$).
$C$ can be seen to be isomorphic to $\POS(\ASC)$, under the map $\iota^* : \POS(\hat{\ASC}) \rightarrow \POS(\ASC)$, where $\iota : \ASC \rightarrow \hat{\ASC}$ is the natural inclusion map.

In particular, $\iota^*$ preserves local rank on $C$, so that this theorem can easily be seen to have a corollary.
\begin{cor}
   \[\elocr(\ASC) = \elocr(\hat{\ASC}).\] 
\end{cor}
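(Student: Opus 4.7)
The strategy is to pass through the face $C = \{q \in \POS(\hat{\ASC}) : q(e_*) = 0\}$ and its isomorphism $\iota^*|_C \colon C \to \POS(\ASC)$ noted just before the corollary. I would set up a local-rank-preserving bijection between the extreme non-SOS rays of $\POS(\hat{\ASC})$ and those of $\POS(\ASC)$, then read off the equality of extreme local ranks.

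For $\elocr(\hat{\ASC}) \le \elocr(\ASC)$, take an extreme non-SOS ray $q \in \POS(\hat{\ASC})$. By the preceding theorem $q \in C$, and since $C$ is a face of $\POS(\hat{\ASC})$, $q$ is extreme in $C$; hence $p := \iota^*(q)$ is an extreme ray of $\POS(\ASC)$. To see $p \notin \SOS(\ASC)$, I would argue contrapositively: an SOS decomposition $p = \sum \ell_k^2$ with $\ell_k \in \CoorR_1(\ASC)$ can be read as an SOS form in $\CoorR_2(\hat{\ASC})$ (the $\ell_k$ involve no $x_*$), and this lift lies in $C$ with image $p$, so by injectivity of $\iota^*|_C$ it equals $q$, contradicting $q \notin \SOS(\hat{\ASC})$. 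The reverse inequality is symmetric: given an extreme non-SOS ray $p \in \POS(\ASC)$, its preimage $(\iota^*|_C)^{-1}(p) \in C$ is extreme in $\POS(\hat{\ASC})$ (because $C$ is a face), and it is not SOS because functoriality of $\iota^*$ would otherwise push the SOS decomposition down to $p$.

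What remains is to verify $\locr(q) = \locr(\iota^*(q))$ for $q \in C$. For such $q$ the partial matrix $X$ has vanishing $*$-th row and column: the diagonal entry is zero by definition of $C$, and each off-diagonal $X_{*,i}$ vanishes because the $2\times 2$ PSD submatrix indexed by $\{*,i\}$ has a zero on the diagonal. From the definition of $\hat{\ASC}$, every face of $\hat{\ASC}$ is either a face $F \in \ASC$ (contributing the same block $X|_F$ in both partial matrices) or of the form $F \cup \{*\}$ with $F \in \ASC$ (whose submatrix is $X|_F$ bordered by zeros, hence of the same rank as $X|_F$). Thus the maxima over faces agree, and the corollary follows. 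The only place requiring real care, though not a genuine obstacle, is the SOS-lifting step and the zero-bordering identification of local ranks; no new idea beyond the structural theorem is needed, since that theorem has already done the substantive work.
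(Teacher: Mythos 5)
Your proof follows essentially the same route as the paper: identify the face $C = \{q \in \POS(\hat{\ASC}) : q(e_*) = 0\}$, use the preceding theorem to confine non-SOS extreme rays to $C$, exploit the isomorphism $\iota^*|_C \colon C \to \POS(\ASC)$, and check that local rank is preserved via the vanishing $*$-row and $*$-column. The paper merely asserts these steps while you correctly spell them out — including the contrapositive SOS-lifting argument and the zero-bordering rank computation — so this is the same argument with the details filled in.
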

\subsubsection{1-Sums}
Given two complexes, $\ASC$ and $\ASCb$ on disjoint vertex sets, and vertices $a \in \ASC$ and $b \in \ASCb$, then the \textbf{1-sum} of $\ASC$ and $\ASCb$ is the complex obtained by identifying $a$ and $b$ in the disjoint union $\ASC \sqcup \ASCb$. 
We denote this 1-sum by $\ASC \oplus \ASCb$ (where we intentionally suppress the dependence on the vertices $a$ and $b$ chosen).

\begin{thm}[Theorem \ref{thm:1sum}]\label{thm:restate_1sum}
    \[\elocr(\ASC \oplus \ASCb) = \max(\elocr(\ASC), \elocr(\ASCb))\]
\end{thm}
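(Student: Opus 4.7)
The plan is to identify $\POS(\ASC \oplus \ASCb)$ with the fiber product $\POS(\ASC) \times_{\R_{\ge 0}} \POS(\ASCb)$ over the evaluation functional at the shared diagonal entry $(a,a)$. This identification is valid because the facets of $\ASC \oplus \ASCb$ are exactly the facets of $\ASC$ together with the facets of $\ASCb$ (no new facets cross the glue vertex $a$), so a partial matrix on $\ASC \oplus \ASCb$ decomposes uniquely into $\ASC$-indexed and $\ASCb$-indexed sub-partial-matrices that may be chosen freely subject only to having the same $(a,a)$-entry. From the same observation, if $\iota_A : \ASC \to \ASC \oplus \ASCb$ and $\iota_B : \ASCb \to \ASC \oplus \ASCb$ are the natural inclusions, then $\locr(q) = \max(\locr(\iota_A^*(q)), \locr(\iota_B^*(q)))$, since every facet of $\ASC \oplus \ASCb$ sits inside exactly one side.

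For the lower bound $\elocr(\ASC \oplus \ASCb) \ge \elocr(\ASC)$, I take an extreme non-SOS $q_A \in \POS(\ASC)$ with $\locr(q_A) = \elocr(\ASC)$ and consider the $q \in \POS(\ASC \oplus \ASCb)$ corresponding to the pair $(q_A,\, (q_A)_{a,a}\, x_a^2)$. A direct extreme-ray analysis of a fiber product of pointed convex cones over $\R_{\ge 0}$ shows that a pair $(r_A, r_B)$ with matching coordinate is an extreme ray iff each factor is extreme in its cone; since $\alpha x_a^2$ is a rank-$1$ extreme SOS element of $\POS(\ASCb)$, this confirms $q$ is extreme. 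Pulling back along $\iota_A$ shows $q$ cannot be SOS (otherwise $q_A$ would be), and since $(q_A)_{a,a}\, x_a^2$ has local rank at most $1$, $\locr(q) = \locr(q_A) = \elocr(\ASC)$. The symmetric argument gives the analogous bound for $\ASCb$.

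For the upper bound, let $q$ be an extreme non-SOS ray of $\POS(\ASC \oplus \ASCb)$ with corresponding pair $(q_A, q_B)$. The extreme-ray classification tells us each of $q_A, q_B$ is extreme in its own cone (the case where one component is zero reduces to an extreme non-SOS form on the other complex, where the bound is immediate). The key claim I need is: \emph{if both $q_A$ and $q_B$ are SOS, then so is $q$.} Granting this, at least one of $q_A, q_B$ must fail to be SOS, and its local rank is bounded by the $\elocr$ of its complex; the other factor is either extreme non-SOS (bounded similarly) or a square of a linear form (local rank at most $1$), and in either case the bound $\locr(q) = \max(\locr(q_A), \locr(q_B)) \le \max(\elocr(\ASC), \elocr(\ASCb))$ follows.

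The main obstacle is the SOS-plus-SOS claim, which I plan to prove by an explicit construction of a single linear form whose square realizes $q$. Write $q_A = \ell^2$ on $\mathcal{V}(\ASC)$ and $q_B = m^2$ on $\mathcal{V}(\ASCb)$ with $\ell_a^2 = m_a^2 = (q_A)_{a,a}$, and replace $m$ by $-m$ if needed so that $\ell_a = m_a$. Then $\tilde{\ell} := \ell + m - \ell_a\, x_a \in \CoorR_1(\mathcal{V}(\ASC \oplus \ASCb))$ has coefficient $\ell_v$ on $x_v$ for $v \in \VER{\ASC}$, coefficient $m_v$ on $x_v$ for $v \in \VER{\ASCb} \setminus \{a\}$, and coefficient $\ell_a = m_a$ on $x_a$. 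Checking entrywise, the only potentially problematic monomials in $\tilde{\ell}^2$ are the ``cross'' products $x_u x_v$ with $u \in \VER{\ASC} \setminus \{a\}$ and $v \in \VER{\ASCb} \setminus \{a\}$, but such pairs are not in $\skel 1 (\ASC \oplus \ASCb)$, so they vanish on the Stanley-Reisner variety. The surviving entries of $\tilde{\ell}^2$ agree with those of $q$ on both the $\ASC$- and $\ASCb$-blocks, exhibiting $q = \tilde{\ell}^2$ as a single square.
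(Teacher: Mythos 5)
Your proof takes the same fiber-product viewpoint as the paper's, but it is more careful in two places. First, the paper's proof leaves implicit the case in which both pullbacks $\phi_1^*(q)$ and $\phi_2^*(q)$ of a non-SOS extreme ray $q$ are themselves squares; your explicit construction $\tilde{\ell} = \ell + m - \ell_a x_a$ showing that two compatible squares glue to a single square on $\mathcal{V}(\ASC \oplus \ASCb)$ closes this cleanly, without any appeal to chordality of the 1-sum. Second, you actually prove the lower bound $\elocr(\ASC \oplus \ASCb) \ge \max(\elocr(\ASC), \elocr(\ASCb))$, which the paper's proof does not address at all.

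One imprecision to fix: the claim that a pair $(r_A, r_B)$ in the fiber product over $\R_{\ge 0}$ is an extreme ray iff each factor is extreme is false in general. If $r_A(e_a) = r_B(e_a) = 0$ with both $r_A, r_B$ nonzero extreme rays, then $(r_A, r_B) = (r_A, 0) + (0, r_B)$ is not extreme; conversely $(r_A, 0)$ is extreme although its second coordinate is zero and hence not an extreme ray. The statement you want is that $(r_A, r_B)$ is extreme iff either both factors are extreme with strictly positive matched coordinate, or exactly one factor is zero and the other is extreme. Where you invoke the claim --- the pair $(q_A, (q_A)_{a,a}\, x_a^2)$, and the decomposition in the upper-bound step --- this condition is satisfied, but your write-up should say so. For the lower bound a more economical route, consistent with the paper's own machinery, is to observe that $\{q \in \POS(\ASC \oplus \ASCb) : q(e_i) = 0 \text{ for all } i \in \VER{\ASCb} \setminus \{*\}\}$ is a face of $\POS(\ASC \oplus \ASCb)$ isomorphic to $\POS(\ASC)$, so each extreme ray of the latter yields one of the former with the same local rank.
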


\subsection{Locally Rank 1 Complexes}

We are particularly interested in complexes with extreme local rank at most 1 because for these, it is possible to use the structure of the extreme rays to get quantitative bounds on the distance between $\POS(\ASC)$ and $\SOS(\ASC)$.
In some senses, locally rank 1 complexes are `almost' chordal, and the extreme rays of the cones of nonnegative quadratics are `almost' squares of linear forms.

Explicitly, let 
\[
    \EXSimp_1 = \{\ASC \in \simp : \elocr(\ASC) \le 1\}.
\]

\subsubsection{Thickened Graphs}
We consider the next definition to be our main contribution, which gives us a rich class of complexes in $\EXSimp_1$ that can be obtained from `gluing together' various chordal graphs in a controlled way.

A \textbf{thickened graph} is essentially a complex in which a graph, $G$, has each of its edges replaced by an arbitrary chordal complex.
We will give a formal definition of a thickened graph in \ref{def:thick_graphs}, but provide an image describing this construction in figure \ref{fig:thickened_graph}.
\begin{figure}[H]
    \centering
\begin{tikzpicture}[
    mainnode/.style={circle, fill=black, minimum size=4mm},
    smallnode/.style={circle, fill=black, minimum size=1mm},
    ]
    \node[mainnode] (1) at (0,0) {}; 
    \node[mainnode] (2) at (0,3) {}; 
    \node[mainnode] (3) at (3,0) {}; 
    \node[mainnode] (4) at (3,3) {}; 

    \draw [-](2) to [out=50, in=130,looseness=15] (2);
    \path [-](1) edge node[left] {} (2);
    \path [-](1) edge node[left] {} (3);
    \path [-](2) edge node[left] {} (4);
    \path [-](3) edge [bend left] node[left] {} (4);
    \path [-](3) edge [bend right] node[left] {} (4);

    \coordinate (5) at (7,0);  
    \coordinate (6) at (7,3);  
    \coordinate (7) at (10,0); 
    \coordinate (8) at (10,3); 

    \coordinate  (9) at (10.5,1.5); 
    \coordinate  (10) at (11,1.5); 

    \coordinate (11) at (8.5,-1) {}; 
    \coordinate (12) at (8.5,-0.4) {}; 

    \coordinate (13) at (6.75,4) {}; 
    \coordinate (14) at (7.25,4) {}; 
    \coordinate (16) at (6.75,4.5) {}; 
    \coordinate (15) at (7.25,4.5) {}; 

    \filldraw[draw=black, fill=gray!20] (7) -- (9) -- (8) -- (10) -- cycle;
    \filldraw[draw=black, fill=gray!50] (7) -- (5) -- (11) -- cycle;
    \filldraw[draw=black, fill=gray!20] (6) -- (13) -- (14) -- (6) to [in = 20, out = 0] (15) -- (16) to [in = 160, out = 180] cycle;

    \node[mainnode] (5n) at (7,0) {};  
    \node[mainnode] (6n) at (7,3) {};  
    \node[mainnode] (7n) at (10,0) {}; 
    \node[mainnode] (8n) at (10,3) {}; 

    \node[smallnode] (9n) at (10.5,1.5) {}; 
    \node[smallnode] (10n) at (11,1.5) {}; 
    \node[smallnode] (11n) at (8.5,-1) {}; 
    \node[smallnode] (12n) at (8.5,-0.4) {}; 

    \node[smallnode] (13n) at (6.75,4) {}; 
    \node[smallnode] (14n) at (7.25,4) {}; 
    \node[smallnode] (16n) at (6.75,4.5) {}; 
    \node[smallnode] (15n) at (7.25,4.5) {}; 

    \path [-](5) edge node[left] {} (6);
    \path [-](5) edge node[left] {} (7);
    \path [-](6) edge node[left] {} (8);
    \path [-](7) edge [bend left] node[left] {} (8);

    \path [-](7) edge node[left] {} (9);
    \path [-](7) edge node[left] {} (10);
    \path [-](8) edge node[left] {} (9);
    \path [-](8) edge node[left] {} (10);
    \path [-](9) edge node[left] {} (10);

    \path [-](11) edge node[left] {} (12);
    \path [-](7) edge node[left] {} (12);
    \path [-](5) edge node[left] {} (12);

    \path [-](13) edge node[left] {} (15);
    \path [-](14) edge node[left] {} (15);
    \path [-](13) edge node[left] {} (16);
\end{tikzpicture}
    \caption{An example of a thickened graph. To the left, is a graph, and to the right is a thickening, where some of the edges have been replaced by other chordal complexes.}%
    \label{fig:thickened_graph}
\end{figure}

The class of thickened graphs contains both the purely 1-dimensional complexes, and all chordal complexes, but also includes more interesting complexes.

\begin{thm}[Theorem \ref{thm:sufficient_rank_1}]\label{thm:restate_sufficient_rank_1}
    $\EXSimp_1$ contains all thickened graphs, and is closed under the 1-sum and cone operations.
\end{thm}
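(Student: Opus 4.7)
The theorem has three parts: (a) every thickened graph lies in $\EXSimp_1$, (b) $\EXSimp_1$ is closed under 1-sums, and (c) $\EXSimp_1$ is closed under the cone operation. Parts (b) and (c) follow immediately from earlier results. Theorem \ref{thm:restate_1sum} gives $\elocr(\ASC \oplus \ASCb) = \max(\elocr(\ASC), \elocr(\ASCb))$, so if both summands have extreme local rank at most 1 then so does the 1-sum, establishing (b); the corollary to Theorem \ref{thm:extremeRayCone} gives $\elocr(\hat{\ASC}) = \elocr(\ASC)$, which trivially preserves the $\le 1$ bound, establishing (c). Neither requires additional work beyond quoting these results.

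Part (a) is the substance. My plan is to induct on the number of ``thickening'' vertices, meaning vertices of the thickened graph that are not part of the underlying graph $G$. For the inductive step, a chordal complex $K_e$ attached along an edge $e = \{u,v\}$ of $G$ can be built up from the single edge $e$ by iteratively adjoining simplicial vertices, because $K_e$ is chordal and contains $e$. Each such adjunction looks locally like a 1-sum with a cone over a smaller chordal complex, or, when the new vertex is adjacent to the entire current subcomplex, like a direct cone operation. Applying the closures (b) and (c) to the inductive hypothesis then propagates the bound $\elocr \le 1$. The base case, where no thickening vertices remain, is the underlying 1-dimensional complex $G$ itself; here I would decompose $G$ at its cut vertices using the 1-sum closure into biconnected subgraphs and, for each of these, exhibit an explicit chordal quotient of deficiency at most 1. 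Theorem \ref{thm:restate_chordal_quotient} then delivers $\elocr \le 1$ for each block.

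The principal obstacle is the base case: showing that every biconnected graph arising as the 1-skeleton of a thickened graph has chordal deficiency at most 1. For a single cycle $C_n$ this is immediate, since a path on $n+1$ vertices is chordal and maps onto $C_n$ by identifying its two endpoints. However, a biconnected graph with several independent cycles generally cannot be made chordal by duplicating only one vertex, so $\rho$ could be larger than 1 for arbitrary biconnected graphs. Resolving this will likely depend on the precise restrictions imposed by Definition \ref{def:thick_graphs}, which may force the biconnected blocks of the underlying graph to have special structure (such as being cycles, or admitting an ear decomposition whose ears can be absorbed by the chordal thickenings). Alternatively, one might circumvent the chordal quotient approach in the base case and instead argue directly on extreme rays of $\POS(G)$ using the Hadamard product semiring structure, which preserves $\POS(\ASC)$ and would allow a tighter analysis of which extreme rays can have local rank exceeding $1$.
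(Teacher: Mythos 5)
Parts (b) and (c) are handled correctly: they are indeed immediate from Theorem \ref{thm:1sum} and the corollary to Theorem \ref{thm:extremeRayCone}, and the paper treats them the same way. The real content of the statement is part (a), that every thickened graph lies in $\EXSimp_1$, and here your proposal has a genuine gap that you yourself half-acknowledge but do not resolve.

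Your inductive plan bottoms out at the underlying 1-dimensional complex $G$, and you want to handle its biconnected blocks via Theorem \ref{thm:restate_chordal_quotient}, which requires chordal deficiency $\rho \le 1$. This is false for general biconnected graphs: already a graph built from two cycles sharing an edge cannot be made chordal by duplicating a single vertex, so $\rho > 1$. You note this, but the suggested fixes (that Definition \ref{def:thick_graphs} might restrict the blocks, or that one might argue via the Hadamard semiring) are not substantiated, and in fact the definition places no such restriction on $D$ --- it is an arbitrary directed multigraph, so the 1-skeleton of a thickened graph can contain any biconnected graph. A further, smaller problem in the inductive step: adjoining a simplicial vertex to a chordal piece $\ASC_e$ is generally neither a 1-sum (which glues along a single vertex) nor a cone operation (which requires the new vertex to be adjacent to the entire current complex), so the inductive propagation of $\elocr\le 1$ via (b) and (c) does not go through.

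The paper proves part (a) directly as Theorem \ref{thm:thickened_graphs_rank_1} by a decomposition argument, not by induction or by chordal deficiency. The key is Lemma \ref{lem:decomp2ver}: if $q$ is extreme with $\locr(q)>1$, then some edge-piece $\ASC_e$ has $\locr(\phi_e^*(q))\ge 2$. Using a Gram representation of a PSD completion, one chooses a one-dimensional subspace $L=\SPAN\,\omega$ such that the projected rank-1 Gram matrix $Y_e$ scales both diagonal entries $X_{T(e),T(e)}$ and $X_{H(e),H(e)}$ by the \emph{same} factor $\alpha\in[0,1]$ (the vector $\omega$ is taken to bisect $v_{T(e)}$ and $v_{H(e)}$ so that $|\langle\omega,v_{T(e)}\rangle| = \alpha\|v_{T(e)}\|$ and likewise for $H(e)$). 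Because the pieces $\ASC_e$ of a thickened graph meet each other only at the attachment vertices, this uniform scaling allows the local decomposition $X_e = Y_e + Z_e$ to be extended to a global decomposition $X = Y + Z$ by taking $Y = \alpha X$ and $Z = (1-\alpha)X$ outside $\ASC_e$, and one checks face-by-face that $Y$ and $Z$ remain in $\POS(\ASC)$. The summands are linearly independent since $\phi_e^*(q_1)$ has rank 1 while $\phi_e^*(q)$ has rank $\ge 2$, contradicting extremality. This ``scale-matched local decomposition'' is precisely the idea your plan is missing, and it is what lets the argument work for \emph{arbitrary} underlying graphs $D$ regardless of chordal deficiency.
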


\subsubsection{Diagonal Classes of Locally Rank 1 Quadratics}
We can fully characterize the locally rank 1 forms in $\POS(\ASC)$, for any complex $\ASC$.

There is a torus action on $\mathcal{V}(\ASC)$, induced by the natural action of the group of invertible diagonal matrices acting on projective space.
That is, we can think of the torus as being the group of invertible diagonal matrices, which acts linearly on projective space, and preserves the coordinate subspaces.

This functorially induces an action on $\CoorR_2(\ASC)$ by treating an element of the torus as an automorphism $\phi : \mathcal{V}(\ASC) \rightarrow \mathcal{V}(\ASC)$.
In coordinates, we consider an $n\times n$ invertible diagonal matrix $D$ and a partial matrix $X$ representing $q \in \CoorR_2(\ASC)$, and the action is given by 
\[(D \cdot X)_{ij} = D_{ii}D_{jj}X_{i,j}. \]

If $q_1, q_2 \in \CoorR_2(\ASC)$, we say that $q_1$ and $q_2$ are \textbf{diagonally congruent} if there is a some torus automorphism on $\mathcal{V}(\ASC)$ whose induced map on $\POS(\ASC)$ sends $q_1$ to $q_2$.
It is not hard to see that local rank is preserved by this torus action.

Hence, the locally rank 1 forms in $\POS(\ASC)$ are divided into orbits of this group action.
We say that an orbit $[q]$ is of full support if $q(e_i) > 0$ for all $i\in \VER{\ASC}$.
Let $K_1$ denote the set of all orbits of locally rank 1 forms in $\POS(\ASC)$ of full support.
It turns out that the Hadamard product is well defined on $K_1$, and makes $K_1$ into a semigroup.
\begin{thm}[Theorem \ref{thm:classify_lrk1_ext}]\label{thm:restate_classify_lrk1_ext}
    The semigroup $K_1$ is isomorphic to the simplicial cohomology group $H^1(\ASC, \Z/2\Z)$.
\end{thm}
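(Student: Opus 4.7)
The plan is to parametrize full-support locally rank 1 elements of $\POS(\ASC)$ explicitly, observe that this parametrization lands in 1-cocycles with $\Z/2\Z$ coefficients, and then show that the diagonal torus action is exactly the coboundary operation.

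First I would normalize representatives of each orbit. Let $q \in \POS(\ASC)$ have full support and local rank 1, and let $X$ be its partial matrix. On each facet $F$, $X|_F$ is rank 1 PSD, so $X|_F = v_F v_F^\top$ for some real vector $v_F$ supported everywhere on $F$. In particular $X_{ii} = (v_F)_i^2 > 0$ for every $i$. The positive part of the diagonal torus acts by $X_{ij} \mapsto D_{ii}D_{jj}X_{ij}$ with $D_{ii}>0$, so choosing $D_{ii} = X_{ii}^{-1/2}$ normalizes the orbit representative to $X_{ii} = 1$ for all $i$. After this normalization, the rank 1 PSD condition on $F$ forces $X_{ij} = \sigma_F(i)\sigma_F(j)$ for some $\sigma_F \colon F \to \{\pm 1\}$, so each edge entry $X_{ij}$ is itself $\pm 1$. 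Write $\epsilon(ij) := X_{ij} \in \Z/2\Z$ (multiplicatively).

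Next I would identify the conditions on $\epsilon$ with the cocycle condition. For a triangle $\{i,j,k\} \in \ASC$, the rank 1 condition on $X|_{\{i,j,k\}}$ forces $\epsilon(ij)\epsilon(jk)\epsilon(ik) = 1$, which is precisely the $\Z/2\Z$ 1-cocycle condition $\delta\epsilon = 0$ on the 2-skeleton. Conversely, if $\epsilon$ is a 1-cocycle and $F \in \ASC$ is any face, then since the 2-skeleton of the simplex on $F$ is simply connected (indeed every closed edge-path bounds a disk of triangles in the complete complex $2^F$), $\epsilon|_F$ is a coboundary on $F$, so there exists $\sigma_F$ with $\epsilon(ij) = \sigma_F(i)\sigma_F(j)$ and hence $X|_F = \sigma_F \sigma_F^\top$ is rank 1 PSD. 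This gives a bijection between normalized full-support locally rank 1 partial matrices and $Z^1(\ASC, \Z/2\Z)$.

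Then I would quotient by the residual torus action. After the positive normalization the remaining diagonal torus is exactly $\{D : D_{ii} \in \{\pm 1\}\} = C^0(\ASC, \Z/2\Z)$, acting on $\epsilon$ by $(D \cdot \epsilon)(ij) = D_{ii}D_{jj}\epsilon(ij)$, which is precisely multiplication by the coboundary $\delta D$. Therefore orbits of full-support locally rank 1 forms under the torus action are in bijection with $Z^1/B^1 = H^1(\ASC, \Z/2\Z)$.

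Finally I would verify that the Hadamard product matches the group operation. Given two normalized representatives with sign patterns $\epsilon_1, \epsilon_2$, their Hadamard product has sign pattern $\epsilon_1 \cdot \epsilon_2$ and diagonal still equal to $1$, and this is exactly the product of cohomology classes (equivalently, addition in $H^1(\ASC, \Z/2\Z)$). The main subtlety is checking that nonnegativity on every face (not just triangles) is captured by the global 1-cocycle condition; this reduces to the easy fact that each simplex $2^F$ is $\Z/2\Z$-acyclic in degree 1, so the triangle conditions imply coherent vertex signings on each facet. With that observation, everything else is a bookkeeping exercise.
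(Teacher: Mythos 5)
Your proof is correct and follows essentially the same route as the paper: normalize the diagonal to $1$ via the positive part of the torus, observe that each edge entry becomes $\pm 1$, identify the rank-$1$/PSD condition on triangles with the $\Z/2\Z$ $1$-cocycle condition, quotient by the residual $\{\pm 1\}$ diagonal congruence which is exactly addition of a coboundary, and check that the Hadamard product matches the group law. The only differences are cosmetic: where you invoke the Gram factorization $X|_F = v_F v_F^\top$ to get the cocycle relation directly, and appeal to $H^1(2^F,\Z/2\Z)=0$ to promote a global cocycle to coherent vertex signings on each face, the paper instead computes the $3\times 3$ determinant to extract the cocycle condition and normalizes $X|_F$ to the all-ones matrix by an explicit diagonal congruence. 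These are the same argument in different dress; your acyclicity-of-the-simplex phrasing is arguably cleaner, and it would still be worth writing out the determinant or Gram computation once to keep the proof self-contained, since that is the one place where positivity of the matrix (rather than just $\pm 1$ entries) is actually used.
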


Note that if $\ASC \in \EXSimp_1$, then all extreme rays of $\POS(\ASC)$ are classified using this theorem.

The condition that $q$ is of full support is only for technical reasons; we see that for any $A \subseteq \VER{\ASC}$, $\{q \in \POS(\ASC) : \forall i \not \in A,\; q(e_i) = 0\}$ is a face of $\POS(\ASC)$ which is isomorphic to $\POS(\ASCb)$, where $\ASCb = \{F \in \ASC : F \subseteq A\}$.
Hence, to classify extxreme rays without full support, it suffices to apply the theorem to vertex induced subcomplexes of $\ASC$.

\subsection{Strongly Connected Maps}
It is not hard to see that if $\phi : \ASC \rightarrow \ASCb$ is any surjective simplicial map, then if $q \in \POS(\ASCb)$, and $\inphi(q)$ spans an extreme ray of $\POS(\ASC)$, then $q$ spans an extreme ray of $\POS(\ASCb)$.

We want to consider maps $\phi:\ASC \rightarrow \ASCb$ for which the converse also holds: if $q$ spans an extreme ray of $\POS(\ASCb)$, then $\inphi(q)$ spans an extreme ray of $\POS(\ASC)$.

\begin{definition}[Strongly Connected Map]\label{def:restate_strong_connected_map}
We say a surjective map $\phi:\ASC \rightarrow \ASCb$ is \textbf{strongly connected} if it satisfies the following two properties:
\begin{enumerate}
    \item If $a, b \in \VER \ASC$, and $\phi(a) = \phi(b) = c \in \VER{{\ASCb}}$, then there is a sequence $a = x_1 ,x_2,x_3,\dots, x_k = b \in \VER \ASC$ so that $\phi(x_i)=c$ and $\{x_i, x_{i+1}\} \in \ASC$ for each $i \in [k-1]$.
    \item If $a, b \in \EDG{\ASC}$, and $\phi(a) = \phi(b) = c \in {\stskel 1 {\ASCb}}$, then there is a sequence $a = e_1 ,e_2,e_3,\dots, e_k = b \in \EDG{\ASC}$ so that $\phi(e_i) = c$, $e_i \cup e_{i+1} \in \ASC$, and $e_i \cap e_{i+1} \neq \varnothing$ for each $i \in [k-1]$.
\end{enumerate}
\end{definition}
In essence, we want the preimage of every edge and vertex of $\ASCb$ to be 2-connected, meaning that there is a path between every pair of vertex, and a `path of  triangles' between every pair of edges
There are a number of interesting strongly connected maps.
An example which we will not descibe in depth is that if we have a clique complex $\chi(G)$ and we contract an edge $e$ of $G$ to get $G / e$, then the resulting quotient map $\phi : \chi(G) \rightarrow \chi(G / e)$ is strongly connected as long as $e$ is not contained in any induced 4-cycles.
The reason we are interested in strongly connected maps is that they give us information about the facial structure of $\POS(\ASC)$.

\begin{thm}[Theorem \ref{thm:strong_conn}]\label{thm:restate_strong_conn}
    If $\phi : \ASC \rightarrow \ASCb$ is strongly connected, then the image of $\inphi$ is a face of $\POS(\ASC)$.
\end{thm}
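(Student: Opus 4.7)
The plan is to work in coordinates: identify $\POS(\ASC)$ with partial PSD-submatrix data, and characterize the image of $\inphi$ as the set of partial matrices $Y$ whose entries $Y_{i,j}$ depend only on the unordered pair $\{\phi(i),\phi(j)\}$. To show this set is a face of $\POS(\ASC)$, I will take $Y \in \mathrm{im}(\inphi)$ and a decomposition $Y = Y_1 + Y_2$ with $Y_1,Y_2 \in \POS(\ASC)$, and show each summand $Y_k$ again has this fiberwise-constancy. Once that is established, I define $X_k$ by $(X_k)_{\phi(i),\phi(j)} = (Y_k)_{i,j}$, and use the face-surjectivity of $\phi$ (every face $F \in \ASCb$ has a preimage face $G \in \ASC$) to write $Y_k|_G = M\,(X_k|_F)\,M^T$ for the indicator matrix $M$ of $\phi|_G$; since $\phi|_G$ is surjective, $M^T$ is surjective, so PSDness of $Y_k|_G$ transfers to $X_k|_F$, giving $X_k \in \POS(\ASCb)$ with $\inphi(X_k) = Y_k$.

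The first main step is the vertex-fiber argument, which uses condition $1$ of strong connectivity. Fix vertices $a,b$ with $\phi(a) = \phi(b)$. Condition $1$ reduces the problem to the case $\{a,b\} \in \ASC$: then the $2\times 2$ principal submatrix $Y|_{\{a,b\}}$ has all four entries equal to $s = X_{\phi(a),\phi(a)} \geq 0$, so it is rank one. A standard PSD fact says that in any PSD decomposition of a rank-one PSD matrix, both summands are proportional to the original; hence $Y_k|_{\{a,b\}}$ also has all four entries equal, giving $(Y_k)_{a,a} = (Y_k)_{a,b} = (Y_k)_{b,b}$ for $k=1,2$. Propagating along the connecting path from condition $1$ shows that $(Y_k)_{a,a}$ depends only on $\phi(a)$, and simultaneously pins down off-diagonal entries $(Y_k)_{i,j}$ whose endpoints $i,j$ share a fiber.

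The second main step is the edge-fiber argument, using condition $2$ together with a $3\times 3$ PSD calculation. Suppose $e, e' \in \EDG{\ASC}$ with $\phi(e) = \phi(e')$. Condition $2$ reduces to the case where $e \cup e' \in \ASC$ is a triangle $T = \{a,b,c\}$, say $e = \{a,b\},\ e'=\{a,c\}$, with $\phi(b) = \phi(c)$ and $\phi(a) \neq \phi(b)$. From the previous paragraph applied to $\{b,c\}$, we know $(Y_k)_{b,b} = (Y_k)_{c,c} = (Y_k)_{b,c}$, so in the $3\times 3$ matrix $Y_k|_T$ the last two rows and columns already agree in their $\{b,c\}$ block. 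Subtracting row/column $c$ from row/column $b$ (a congruence preserving PSDness) produces a matrix whose principal $2\times 2$ minor on $\{a,c-b\}$ equals $-\bigl((Y_k)_{a,b}-(Y_k)_{a,c}\bigr)^2$, which must be nonnegative; hence $(Y_k)_{a,b} = (Y_k)_{a,c}$. Iterating along the connecting sequence of edges yields fiberwise-constancy for all remaining off-diagonal entries.

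Combining the two steps gives that $Y_k$ is a pullback in the sense required, and the face-surjectivity reduction from the first paragraph completes the argument. The most delicate step is the $3 \times 3$ edge argument: it is where the triangle-sharing requirement in condition $2$ is genuinely used, because without a bounding face of $\ASC$ containing both edges one has no PSD constraint rigidly coupling the two off-diagonal entries. The other steps are routine PSD bookkeeping and propagation along the connecting paths provided by strong connectivity.
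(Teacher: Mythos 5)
Your proof is correct and follows essentially the same strategy as the paper's: characterize $\operatorname{im}(\phi^*)$ as the partial matrices constant on $\phi$-fibers of vertices and edges, prove a rank-one $2\times 2$ lemma for vertex fibers and a $3\times 3$ lemma for edge fibers, and chain them along the paths supplied by strong connectivity. The only substantive difference is cosmetic: where the paper proves the $3\times3$ step by writing $Y|_{\{a,b,c\}}$ as a Gram matrix and invoking equality in Cauchy--Schwarz to conclude $v_b = v_c$, you instead apply the congruence ``subtract row/column $b$ from row/column $c$'' and read off the forced vanishing of $(Y_{a,b}-Y_{a,c})^2$ from a $2\times2$ principal minor; both are two-line linear-algebra arguments and buy the same thing. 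Your closing reconstruction of a preimage $X_k \in \POS(\ASCb)$ via the indicator matrix $M$ of $\phi|_G$ is also equivalent to the paper's Lemma on the image of $\phi^*$ (which passes instead to a sub-face $G$ on which $\phi$ is a bijection); both use face-surjectivity in the same way.
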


In particular, this result implies that extreme rays of $\POS(\ASCb)$ pull back to extreme rays of $\POS(\ASC)$.

\begin{cor}\label{cor:strong_extreme_rays}
    If $\phi : \ASC \rightarrow \ASCb$ is strongly connected, then for any $q \in \POS(\ASCb)$ which spans an extreme ray, $\inphi(q)$ spans an extreme ray of $\POS(\ASC)$.
\end{cor}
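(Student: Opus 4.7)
The plan is to derive the corollary directly from Theorem \ref{thm:restate_strong_conn}, which already supplies the hard content: the image of $\inphi$ is a face of $\POS(\ASC)$. Once we have this, extremality transfers along $\inphi$ essentially by definition of a face, provided we also use that $\inphi$ is injective (which follows from the surjectivity of $\phi$, already noted after Definition \ref{def:chordal_quotient}).

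Concretely, let $q \in \POS(\ASCb)$ span an extreme ray, and suppose $\inphi(q) = a + b$ with $a, b \in \POS(\ASC)$. I would first invoke Theorem \ref{thm:restate_strong_conn} to conclude that the image $F := \inphi(\POS(\ASCb))$ is a face of $\POS(\ASC)$. Since $\inphi(q) \in F$ and $F$ is a face, both summands $a$ and $b$ must also lie in $F$. Hence there exist $a', b' \in \POS(\ASCb)$ with $\inphi(a') = a$ and $\inphi(b') = b$.

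Next, I would use injectivity of $\inphi$: from $\inphi(q) = \inphi(a') + \inphi(b') = \inphi(a' + b')$ we conclude $q = a' + b'$ in $\POS(\ASCb)$. Since $q$ spans an extreme ray, $a'$ and $b'$ must be nonnegative scalar multiples of $q$, and applying $\inphi$ shows that $a$ and $b$ are nonnegative scalar multiples of $\inphi(q)$. This is precisely the statement that $\inphi(q)$ spans an extreme ray of $\POS(\ASC)$.

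There is essentially no obstacle here: the entire nontrivial content is packaged into Theorem \ref{thm:restate_strong_conn}, and the only subtlety worth flagging is the need for injectivity of $\inphi$ to lift the decomposition from $\POS(\ASC)$ back to $\POS(\ASCb)$. Without strong connectivity one would only know that $a, b$ are in $\POS(\ASC)$, not that they come from $\POS(\ASCb)$, so this is exactly where the hypothesis is used.
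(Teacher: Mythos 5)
Your proof is correct and follows precisely the same route the paper sketches: invoke Theorem \ref{thm:restate_strong_conn} to get that the image of $\inphi$ is a face, use the face property to lift the decomposition $\inphi(q) = a + b$ to $\POS(\ASCb)$, and then use injectivity of $\inphi$ (from surjectivity of $\phi$) to conclude. The paper only records this argument as a brief remark after the theorem, so your version is a slightly more explicit write-up of the same reasoning.
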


It is clear that if $\phi$ is surjective, then $\phi$ preserves local rank, so that as a corollary,
\begin{cor}[Extreme Local Rank and Strongly Connected Maps]\label{cor:local_rank_sconn}
    If $\phi : \ASC \rightarrow \Gamma$ is strongly connected, then $\elocr(\ASC) \ge \elocr(\Gamma)$.    
\end{cor}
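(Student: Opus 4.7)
The plan is to combine Corollary \ref{cor:strong_extreme_rays} (strongly connected maps pull extreme rays back to extreme rays) with two observations: surjective simplicial maps preserve local rank, and strongly connected maps preserve non-sum-of-squares-ness of extreme rays. If $\Gamma$ is chordal the claim is vacuous, so assume it is not and pick a non-SOS extreme ray $q \in \POS(\Gamma)$ achieving $\locr(q) = \elocr(\Gamma)$. Strong connectedness implies surjectivity, so Corollary \ref{cor:strong_extreme_rays} gives that $\inphi(q)$ spans an extreme ray of $\POS(\ASC)$.

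For the local rank, represent $q$ by a partial matrix $X$ indexed by $\VER{\Gamma}$. Then $\inphi(q)$ is represented by the partial matrix $Y$ with $Y_{ab} = X_{\phi(a), \phi(b)}$. For a face $F \in \ASC$, the submatrix $Y|_F$ is obtained from $X|_{\phi(F)}$ by duplicating rows and columns according to the fibers of $\phi|_F$, so $\rank Y|_F = \rank X|_{\phi(F)}$. Surjectivity of $\phi$ on faces then yields $\locr(\inphi(q)) = \locr(q) = \elocr(\Gamma)$.

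The heart of the proof is ruling out $\inphi(q) \in \SOS(\ASC)$. Suppose it lies there. Since $\inphi(q)$ is extreme in $\POS(\ASC)$ and each square $\ell_i^2$ appearing in an SOS decomposition must be proportional to it, we may write $\inphi(q) = \ell^2$ for some $\ell \in \CoorR_1(\ASC)$. In coordinates this reads $\ell_a \ell_b = X_{\phi(a),\phi(b)}$ whenever $\{a,b\} \in \skel 1 \ASC$; in particular $\ell_a^2 = X_{\phi(a),\phi(a)}$. Now fix $u \in \VER{\Gamma}$ and $a, b \in \phi^{-1}(u)$. Condition 1 of Definition \ref{def:restate_strong_connected_map} provides a path $a = x_1, x_2, \ldots, x_k = b$ inside $\phi^{-1}(u)$ with each $\{x_i, x_{i+1}\} \in \ASC$. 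The relations $\ell_{x_i}^2 = \ell_{x_i}\ell_{x_{i+1}} = X_{u,u}$ force $\ell_{x_i} = \ell_{x_{i+1}}$ (splitting on whether $X_{u,u}$ vanishes), so iterating yields $\ell_a = \ell_b$. Thus $\ell$ is constant on every fiber of $\phi$, which is exactly the condition characterizing the image of $\inphi : \CoorR_1(\Gamma) \to \CoorR_1(\ASC)$. Writing $\ell = \inphi(m)$, we get $\inphi(q) = \inphi(m^2)$, and the injectivity of $\inphi$ (noted after Definition \ref{def:chordal_quotient}) gives $q = m^2 \in \SOS(\Gamma)$, contradicting the choice of $q$. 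Hence $\inphi(q) \notin \SOS(\ASC)$, and $\elocr(\ASC) \ge \locr(\inphi(q)) = \elocr(\Gamma)$.

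The main obstacle is the third paragraph: recognizing that Condition~1 of strong connectedness is precisely the property that forces a witness linear form $\ell$ to descend through $\phi$. The first two paragraphs are essentially bookkeeping around Corollary \ref{cor:strong_extreme_rays}. It is worth noting that Condition~2 of strong connectedness plays no role here, consistent with the fact that the proof of Corollary \ref{cor:strong_extreme_rays} already absorbs that hypothesis.
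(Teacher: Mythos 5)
Your proof is correct and follows the same overall route the paper sketches in the remark preceding the corollary: pull back a maximal-rank non-SOS extreme ray through $\phi^*$, use Corollary~\ref{cor:strong_extreme_rays} to keep extremeness, and use surjectivity to preserve local rank. The genuine value you add is the third paragraph. The definition of $\elocr$ counts only \emph{non-sum-of-squares} extreme rays, so to conclude $\elocr(\ASC) \ge \elocr(\Gamma)$ one must know $\phi^*(q) \notin \SOS(\ASC)$, and the paper's remark never verifies this. When $\elocr(\Gamma) \ge 2$ this is automatic (SOS extreme rays are squares, which have local rank at most one), but when $\elocr(\Gamma) = 1$ something more is needed, and that is exactly where your fiber-descent argument earns its keep: condition~(1) of strong connectedness, combined with the rank-one relations $\ell_a^2 = \ell_a\ell_b = \ell_b^2$ along a path in the fiber, forces a would-be square root $\ell$ to be constant on fibers and hence to descend to a linear form $m$ on $\Gamma$ with $q = m^2$, contradicting non-SOS-ness. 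This neatly closes the case the paper leaves implicit.

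Two small things worth noting. First, your observation that condition~(2) of strong connectedness is not used here is consistent with the paper, since Theorem~\ref{thm:strong_conn} (and hence Corollary~\ref{cor:strong_extreme_rays}) is where both conditions are consumed, and the remainder of the corollary only needs condition~(1) for the descent of $\ell$. Second, as stated in the source the corollary actually reads $\elocr(\ASC) \ge \elocr(\ASC)$ — an evident typo for $\elocr(\ASC) \ge \elocr(\ASCb)$, which your proof correctly treats as the intended statement.
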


Our next collection of results gives further examples coming from algebraic topology.

\subsection{Maps to Spheres and Combinatorial Manifolds}
We would like to apply theorem \ref{thm:strong_conn} to obtain lower bounds on the extreme local rank for some complexes.
To do so, we first wish to consider some complexes with relatively large extreme local rank.

One such class of complexes is the set of simplicial spheres, defined as 
\[
    S_d = \{S \subseteq [d+2] : |S| \le d+1\}.
\]

It was shown in \cite[Theorem 4.4]{gouveia2021sums} what the extreme local rank of simplicial spheres are.
\begin{lemma}[Extreme Local Rank of Spheres]\label{lmma:elr_spheres}
    \[
        \elocr(S_d) = d.
    \]
\end{lemma}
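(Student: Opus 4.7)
The plan is to establish $\elocr(S_d) = d$ by proving both inequalities, exploiting the fact that $\skel 1 {S_d} = K_{d+2}$. Consequently, elements of $\POS(S_d)$ are represented by fully specified $(d+2) \times (d+2)$ symmetric matrices $X$ whose every principal $(d+1) \times (d+1)$ submatrix is PSD, and $q \in \SOS(S_d)$ iff the full matrix $X$ is PSD.

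For the lower bound $\elocr(S_d) \ge d$, I would exhibit the explicit matrix
\[
X = \tfrac{d+1}{d}\, I - \tfrac{1}{d}\, J
\]
of size $(d+2) \times (d+2)$, where $I$ and $J$ are the identity and all-ones matrices. A direct eigenvalue computation using invariance under the symmetric group gives eigenvalue $-1/d$ on the all-ones vector and $(d+1)/d$ on its orthogonal complement, so $X$ has signature $(d+1, 1)$ and is not PSD. Each principal $(d+1)$-submatrix has the same form and eigenvalues $0$ (simple) and $(d+1)/d$ (multiplicity $d$), hence is PSD of rank exactly $d$; this gives $\locr(q_X) = d$. Extremality is verified by imposing the linear system $X|_F \cdot \mathbf{1}_F = 0$ for each of the $d+2$ facets $F$ (since every facet submatrix is singular with kernel $\mathbf{1}_F$); a direct check shows these equations force all diagonal entries equal and all off-diagonal entries equal, cutting out a one-dimensional solution space spanned by $X$. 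Thus $q_X$ spans an extreme ray of the face of $\POS(S_d)$ carved out by these equality constraints, and hence an extreme ray of $\POS(S_d)$ itself.

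For the upper bound $\elocr(S_d) \le d$, let $X$ represent an extreme non-SOS element. Since $X$ is not PSD, it has at least one negative eigenvalue; Cauchy interlacing implies that every principal $(d+1)$-submatrix inherits at least $n-1$ negative eigenvalues, so the PSD constraints force $n = 1$. I then claim every $X|_F$ is singular. Supposing $X|_{F^*}$ is positive definite for some facet $F^* = [d+2] \setminus \{i^*\}$, consider the $(d+2)$-dimensional subspace of symmetric perturbations $E$ supported on row and column $i^*$. Since $E|_{F^*} = 0$, the submatrix $X|_{F^*}$ is unaffected by $X \pm tE$; for each other facet $F'$ where $X|_{F'}$ is on the boundary of the PSD cone, requiring $X \pm tE$ to remain nonnegative imposes the linear constraint $v^\top E v = 0$ for each $v \in \ker(X|_{F'})$. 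A dimension count shows these constraints leave a nonzero $E$, which cannot be a scalar multiple of $X$ (since $E|_{F^*} = 0$ while $X|_{F^*} \ne 0$), so the decomposition $X = \tfrac{1}{2}(X+tE) + \tfrac{1}{2}(X-tE)$ contradicts extremality.

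The main obstacle is making the upper-bound perturbation argument fully rigorous when the kernels of singular facet submatrices have dimension greater than one; in that case the $\binom{k+1}{2}$ constraints per facet threaten to exceed the $(d+2)$-dimensional perturbation budget, and one must exploit $n = 1$ together with interlacing to control the total kernel dimensions carefully. An alternative, more conceptual route for the upper bound would be to invoke Theorem \ref{thm:restate_chordal_quotient} after exhibiting an explicit chordal quotient $\phi : \ASCb \to S_d$ with $|\VER \ASCb| - (d+2) \le d$; such a quotient can be built (as seen for $d=1$ via the path $P_4$ identifying endpoints, and for $d=2$ via a $6$-vertex chordal $2$-complex with four triangular facets mapping onto the four facets of $\partial \Delta^3$) and the construction extends inductively, though verifying chordality at each step is itself nontrivial.
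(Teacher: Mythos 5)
The paper does not actually prove this lemma; it is cited directly from Gouveia--Parrilo--Saunderson \cite[Theorem 4.4]{gouveia2021sums}, so there is no in-paper proof to compare against and your argument is a self-contained alternative. Your lower bound is correct and complete: the matrix $X = \tfrac{d+1}{d}I - \tfrac{1}{d}J$ has the eigenstructure you describe, each facet submatrix is PSD of rank exactly $d$, and the extremality argument is sound --- the set $\{Y \in \POS(S_d) : Y|_F \mathbf{1}_F = 0 \text{ for all facets } F\}$ is a face of $\POS(S_d)$ because $\mathbf{1}_F^\top Y|_F \mathbf{1}_F = 0$ for a PSD matrix forces $Y|_F\mathbf{1}_F=0$, and the linear system you write down does indeed cut this face down to the single ray $\mathbb{R}_{\geq 0}X$.

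The upper bound, as you suspect, has a genuine gap, but the problem is slightly different from the one you flagged and it is fixable within your own strategy. First, the condition for $X|_{F'} \pm tE|_{F'}$ to both remain PSD for small $t$ is not $v^\top E|_{F'} v = 0$ for $v \in \ker X|_{F'}$ but the stronger condition $E|_{F'}v = 0$; the weaker quadratic condition admits perturbations like $\begin{pmatrix}1&0\\0&0\end{pmatrix}\pm t\begin{pmatrix}0&1\\1&0\end{pmatrix}$ that leave the PSD cone immediately. Second, with that corrected constraint the border perturbations supported on row/column $i^*$ typically collapse: when $X|_{F^*}$ is positive definite, one can show that any nonzero $v \in \ker X|_{F_{j'}}$ must have $v_{i^*}\neq 0$, and the condition $E|_{F_{j'}}v = 0$ then forces all entries $E_{i^*,j}$ with $j\in F_{j'}$ to vanish; if two or more other facets are singular this kills $E$ entirely. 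The fix is to replace the border perturbation with a rank-one subtraction. If $X|_{F^*}$ is positive definite, Cauchy interlacing (applied once more) shows $X$ has $d+1$ strictly positive eigenvalues and one negative, and interlacing then forces $\dim\ker X|_{F_j}\leq 1$ for every other facet $F_j$. Embedding each kernel into $\mathbb{R}^{d+2}$ by zero-padding gives a subspace $W$ of dimension at most $d+1$, so one can pick $0\neq v\in W^\perp$. Since $v|_F\perp\ker X|_F$ for every facet $F$, the matrix $X - \epsilon vv^\top$ has all facet submatrices PSD for small $\epsilon>0$, giving the decomposition $X = \epsilon vv^\top + (X-\epsilon vv^\top)$ into two linearly independent elements of $\POS(S_d)$ (linearly independent because $vv^\top$ is PSD while $X$ is not), contradicting extremality. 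This closes the gap and makes the kernel-dimension worry you raised moot; by contrast the alternative route via an explicit chordal cover of $S_d$ with deficiency $d$ does not obviously generalize cleanly beyond $d=1$.
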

Thus, if we wish to show that $\ASC$ has extreme local rank at least $d$, it suffices to obtain a strongly connected map from $\ASC$ to $S_d$.
\begin{thm}[Maps to Spheres]\label{thm:restate_map_to_sphere}
    Suppose that $\ASC$ is a simplicial complex, and $F$ is a facet of $\ASC$ of size $d+1 > 1$ with the following properties.
    (Use $F^c$ to denote $\VER{\ASC}-F$.)
    \begin{enumerate}
        \item For any $a, b \in F^c$, there is a sequence $a_1 ,\dots, a_k \in F^c$ so that $a_1 = a$, $a_k = b$ and $\{a_i, a_{i+1}\} \in \ASC$ for each $i$.
        \item For each $c \in F$, and for any $a, b \in F^c$ so that $\{a,c\}, \{b,c\} \in \ASC$, there is a sequence $a_1 ,\dots, a_k \in F^c$ so that $a_1 = a$, $a_n = b$ and $\{a_i, a_{i+1}, c\} \in \ASC$ for each $i \in [n]$.
        \item For each $c \in F$, there is some $w \in F^c$ so that $F - c + w \in \ASC$.
    \end{enumerate}

    Then there is strongly connected map from $\ASC$ to $S_d$.
\end{thm}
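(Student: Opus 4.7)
The plan is to construct an explicit simplicial map $\phi : \ASC \to S_d$ that ``collapses'' the entire complement $F^c$ onto a single extra vertex of $S_d$, and then to verify surjectivity and the two strong-connectivity axioms directly against the three hypotheses. Enumerate $F = \{c_1, \ldots, c_{d+1}\}$ and identify $\VER{S_d}$ with $[d+2]$; then define $\phi(c_i) = i$ for $i \in [d+1]$ and $\phi(v) = d+2$ for every $v \in F^c$. To see that $\phi$ is a simplicial map into $S_d$, it is enough to rule out a face of $\ASC$ mapping onto all of $[d+2]$; such a face would have to contain all of $F$ together with some $v \in F^c$, contradicting maximality of the facet $F$. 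Hence the image of every face of $\ASC$ is a proper subset of $[d+2]$, which is a face of $S_d$.

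For surjectivity it suffices to realize every facet of $S_d$: the facet $[d+1]$ is the image of $F$ itself, and for each $i \in [d+1]$ hypothesis 3 supplies $w_i \in F^c$ with $(F \setminus \{c_i\}) \cup \{w_i\} \in \ASC$, whose image is exactly $([d+1] \setminus \{i\}) \cup \{d+2\}$. For the vertex axiom of strong connectivity, $\phi^{-1}(i) = \{c_i\}$ is a singleton for $i \in [d+1]$, while $\phi^{-1}(d+2) = F^c$ is path-connected in $\skel 1 \ASC$ by hypothesis 1. For the edge axiom, an edge $\{i,j\} \subseteq [d+1]$ of $S_d$ has a singleton preimage $\{\{c_i, c_j\}\}$ (since $F \in \ASC$), while an edge $\{i, d+2\}$ of $S_d$ has preimage consisting of all edges $\{c_i, v\}$ with $v \in F^c$ and $\{c_i, v\} \in \ASC$; given any two such preimage edges $\{c_i, \alpha\}$ and $\{c_i, \beta\}$, hypothesis 2 applied with $c = c_i$ produces a sequence $\alpha = a_1, \ldots, a_k = \beta$ in $F^c$ with $\{c_i, a_l, a_{l+1}\} \in \ASC$ for all $l$, and then $e_l := \{c_i, a_l\}$ gives the required edge path, since each $e_l \cup e_{l+1}$ is a triangle in $\ASC$ and consecutive edges share the vertex $c_i$.

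The main subtlety I expect is simply to match the triangles produced by hypothesis 2 against the definitional requirement ``$e_l \cup e_{l+1} \in \ASC$ with $e_l \cap e_{l+1} \neq \varnothing$''; this matching forces the shared vertex to be in $F$ rather than in $F^c$, which is exactly the shape in which hypothesis 2 is stated. Past this alignment, the argument is bookkeeping: each hypothesis is tailored to exactly one obligation (surjectivity at facets through $d+2$, vertex connectivity over $d+2$, and edge connectivity over $\{i, d+2\}$), while everything involving only vertices in $[d+1]$ is handled automatically by the fact that $F$ is a face of $\ASC$.
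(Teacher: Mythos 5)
Your proof is correct and follows essentially the same route as the paper's: you construct the identical map $\phi$ (injective on $F$, collapsing $F^c$ to the extra vertex $d+2$), and you discharge simpliciality via maximality of $F$, surjectivity via hypothesis~3, the vertex axiom via hypothesis~1, and the edge axiom via hypothesis~2. If anything you are slightly more explicit than the paper about the trivial cases (preimages of vertices and edges contained in $[d+1]$ being singletons), which is a small improvement in rigor but not a different argument.
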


A particularly illustrative (though not exhaustive) collection of examples come from \textbf{combinatorial manifolds}.
To define these, we will need to define the \textbf{link} of a simplicial complex $\ASC$ at a vertex $i \in E$ to be 
\[
    \lnk_i(\ASC) = \{F \in \ASC : i \not \in F,\; F \cup \{i\} \in \ASC\}.
\]
$\lnk_i(\ASC)$ is clearly a simplicial complex.
\begin{definition}[Combinatorial Manifold]\
A simplicial complex $\ASC$ is said to be a \textbf{combinatorial manifold} of dimension $d$ if for each vertex $i \in \VER \ASC$, the link $\lnk_i(\ASC)$ has a topological realization which is homemomorphic to a sphere of dimension $d-1$, and $|\ASC|$ is also a topological manifold.

\end{definition}
We found this definition in  \cite{penna1978geometry}.
Note that some definitions of combinatorial manifolds require that the link of each vertex be simplicially isomorphic to $S_d$, which is a stronger condition.

\begin{cor}\label{cor:combin_manifolds}
    If $\ASC$ is a connected combinatorial manifold of dimension $d$, then $\ASC$ has a strongly connected map to $S_d$.
    In particular,
    \[\elocr(\ASC) \ge d.\]
\end{cor}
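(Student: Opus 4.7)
The plan is to apply Theorem \ref{thm:restate_map_to_sphere} to an arbitrary facet $F$ of $\ASC$, which has size $d+1$ since $\ASC$ is $d$-dimensional (we may assume $d \ge 1$, since for $d = 0$ a connected combinatorial manifold is a single point and the claim is vacuous). Producing the strongly connected map $\ASC \to S_d$ will yield $\elocr(\ASC) \ge \elocr(S_d) = d$ via Corollary \ref{cor:local_rank_sconn} and Lemma \ref{lmma:elr_spheres}.

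Hypothesis (3) of Theorem \ref{thm:restate_map_to_sphere} is the pseudomanifold property: in a combinatorial $d$-manifold every ridge is contained in exactly two facets. For each $c \in F$, the ridge $F \setminus \{c\}$ thus lies in $F$ and in one other facet $F' = (F \setminus \{c\}) \cup \{w\}$ with $w \in F^c$. This property follows from $|\ASC|$ being a topological manifold: a point in the relative interior of a ridge has a Euclidean neighborhood in $|\ASC|$, forcing exactly two top-dimensional faces to meet along the ridge.

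Let $(\star)$ denote the claim that if $M$ is a connected combinatorial $n$-manifold with $n \ge 1$ and $G$ is a facet of $M$, then the vertex-induced subcomplex $M[V(M) \setminus G]$ has connected $1$-skeleton. Hypotheses (1) and (2) are both instances of $(\star)$: hypothesis (1) is $(\star)$ applied with $(M,G) = (\ASC, F)$; hypothesis (2) is $(\star)$ applied with $(M,G) = (\lnk_c(\ASC), F \setminus \{c\})$, since the link of a vertex in a connected combinatorial $d$-manifold is a connected combinatorial $(d-1)$-manifold, $F \setminus \{c\}$ is a facet of this link, and the adjacency $\{a_i, a_{i+1}, c\} \in \ASC$ is exactly the condition that $\{a_i, a_{i+1}\}$ is an edge of $\lnk_c(\ASC)$.

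I would prove $(\star)$ by induction on $n$. The base case $n = 1$ is immediate: a connected $1$-manifold is a cycle, and removing the two endpoints of an edge leaves a path. For the inductive step, given $a, b \in V(M) \setminus G$, take an edge path from $a$ to $b$ in $M$ and reroute each excursion through $G$: at a length-$1$ excursion $u, c, v$ with $c \in G$ and $u, v \in V(M) \setminus G$, the inductive hypothesis applied to $\lnk_c(M)$ (a connected $(n-1)$-manifold) with facet $G \setminus \{c\}$ yields a path from $u$ to $v$ in $V(\lnk_c(M)) \setminus (G \setminus \{c\})$, whose edges lift to edges of $M$ lying entirely in $V(M) \setminus G$. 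The main obstacle is handling excursions of length greater than $1$ (consecutive path vertices in $G$), which I expect to resolve by shortening them to length-$1$ excursions, exploiting both that $G$ is a simplex whose vertices are pairwise adjacent in $M$ and the pseudomanifold connectivity of the intermediate links guaranteed by the inductive hypothesis.
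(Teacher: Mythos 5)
Your high-level reduction matches the paper's: apply Theorem \ref{thm:restate_map_to_sphere} to an arbitrary facet, observe that hypotheses (1) and (2) are the same connectivity statement applied once to $\ASC$ and once to $\lnk_c(\ASC)$, and derive hypothesis (3) from the local Euclidean structure. Where you diverge is in how that connectivity statement $(\star)$ gets established: the paper invokes a topological fact (Lemma \ref{lem:topological}, that deleting a closed ball from a closed connected manifold leaves a connected set, applied to $|\ASC|$ and to $|\lnk_c(\ASC)|$) together with simplicial approximation to turn continuous paths into edge paths, while you propose a purely combinatorial induction on dimension through vertex links. This is a genuinely different route, and if it worked it would be more self-contained, since Lemma \ref{lem:topological} is asserted in the paper without proof.

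However there are two real gaps. First, well-foundedness: your inductive step applies $(\star)$ to $\lnk_c(M)$ and thus needs $\lnk_c(M)$ to itself be a connected combinatorial $(n-1)$-manifold. Under the paper's definition this means you need $|\lnk_w(\lnk_c(M))| = |\lnk_{\{c,w\}}(M)|$ to be an $(n-2)$-sphere for every vertex $w$ of the link, but the paper's hypothesis only constrains links of \emph{vertices} of $M$, not links of edges or higher faces, and it does not say this property is inherited. The paper's proof never needs this: it only uses that $|\lnk_c(\ASC)|$ is a sphere, hence a closed manifold, which is directly given. You would need either to strengthen the hypothesis to a PL-manifold condition or to prove the descent of sphericity to links, neither of which is trivial. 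Second, the excursion-shortening step, which you flag as ``the main obstacle,'' is not actually carried out, and it is not merely bookkeeping. Pairwise adjacency of the vertices of $G$ lets you assume an excursion $u, c_1, \dots, c_m, v$ has $m \le 2$, but reducing a two-step excursion $u, c_1, c_2, v$ to one-step excursions requires producing a vertex $w \notin G$ with $\{w, c_1, c_2\} \in M$, i.e.\ a vertex of $\lnk_{\{c_1,c_2\}}(M)$ outside $G$; this is where links of edges (and for longer manipulations, links of higher faces) enter, which again presupposes the descent property from the first gap. So as written the proposal is incomplete both in the shortening mechanism and in the structural hypothesis the induction rests on.
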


\begin{remark}
    It is noteworthy that all of the complexes which we know have extreme local rank 1 have the property that they, and all of their vertex induced subcomplexes, are homotopy equivalent to purely 1-dimensional complexes (though there are examples for which the converse does not hold).
    On the other hand, these results show that a complex with a vertex induced subcomplex which is a combinatorial manifold of dimension $d > 1$ does not have extreme local rank 1.
    It is natural to ask then whether the following generalization of these facts hold:
    \begin{conj}
        If $\ASC$ is a complex with extreme local rank 1, then $H^i(\ASC, \mathbb{Z}) = 0$ for $i > 1$. 
    \end{conj}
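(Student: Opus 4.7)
The plan is to prove the contrapositive: if $H^i(\ASC, \mathbb{Z}) \neq 0$ for some $i \geq 2$, then $\elocr(\ASC) \geq 2$. The natural tool for producing high extreme local rank is Corollary \ref{cor:local_rank_sconn} together with Lemma \ref{lmma:elr_spheres}: exhibiting a strongly connected simplicial map $\phi : \ASC \to S_d$ with $d \geq 2$ would force $\elocr(\ASC) \geq d \geq 2$. So the proof reduces to producing such a map out of any complex with non-vanishing higher integral cohomology.

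First I would choose the minimal $i \geq 2$ with $H^i(\ASC, \mathbb{Z}) \neq 0$, so that all Postnikov-type obstructions below degree $i$ vanish. Hopf's classification theorem then furnishes a continuous map $f : |\ASC| \to S^i$ which is not null-homotopic, and simplicial approximation (after a suitable subdivision $\ASC'$ of $\ASC$) yields a non-null-homotopic simplicial map $g : \ASC' \to S_i$. The heart of the argument is then to push $g$ back to a map from $\ASC$ itself and to massage it so that the preimage of every vertex is path-connected and the preimage of every edge is 2-connected through triangles, as Definition \ref{def:restate_strong_connected_map} demands. The local conditions appearing in Theorem \ref{thm:restate_map_to_sphere} give a template for this massaging: the hope is that non-triviality of the cohomology class of $f$ forces preimages of faces of $S_i$ to have the correct connectivity at each stage, via an obstruction-theoretic counting argument where any failure of connectivity produces a lower-degree cohomology class which has been assumed to vanish.

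The main obstacle, and the reason the statement remains a conjecture, is precisely this translation from continuous topological data into strongly connected combinatorial data. Simplicial approximation inherently requires subdivision, and the strongly connected property is substantially more rigid than mere simplicial-ness; there is no functorial way to extract such a map directly from a cohomology class. A more promising intermediate goal would be to prove the stronger statement hinted at in the remark preceding the conjecture: every $\ASC \in \EXSimp_1$ is homotopy equivalent to a 1-dimensional complex. This would follow from showing that the cone, 1-sum, and thickened-graph constructions of Theorem \ref{thm:restate_sufficient_rank_1} exhaust all of $\EXSimp_1$, perhaps by a structural induction on vertex-induced subcomplexes exploiting the face structure of $\POS(\ASC)$ given by Theorem \ref{thm:restate_strong_conn}. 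The cohomological conjecture then follows immediately, since any CW complex homotopy equivalent to a 1-complex has vanishing $H^i$ for $i > 1$.
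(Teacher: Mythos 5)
This statement is presented in the paper as a \emph{conjecture}, not a theorem; the paper offers no proof, so there is nothing to compare against. Your sketch is an honest attempt at a strategy, and you correctly flag the central obstruction yourself. Two additional points are worth raising, though. First, Hopf's classification theorem identifies $[X, S^n]$ with $H^n(X;\mathbb{Z})$ only when $\dim X \le n$; for a complex $\ASC$ of dimension larger than $i$, a nonzero class in $H^i(\ASC;\mathbb{Z})$ does not automatically produce a non-null-homotopic map to $S^i$, so even the first step of your contrapositive argument requires a dimension hypothesis that is not available. Second, and more seriously, the subdivision issue you mention is not merely an inconvenience: $\elocr$ is tied to the face structure of $\ASC$ through $\POS(\ASC)$ and has no reason to be a homotopy or even PL invariant (for instance, subdivisions of chordal complexes need not be chordal), so a strongly connected map out of a subdivision $\ASC'$ gives no information about $\elocr(\ASC)$. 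Corollary \ref{cor:local_rank_sconn} requires the source to be $\ASC$ itself. Finally, your proposed intermediate goal that every $\ASC \in \EXSimp_1$ is built from thickened graphs by cones and 1-sums is strictly stronger than the paper's Theorem \ref{thm:restate_sufficient_rank_1}, which only asserts one inclusion, and the paper gives no evidence that this converse holds; proving it would likely require a classification of extreme rays well beyond the locally-rank-1 case handled by Theorem \ref{thm:restate_classify_lrk1_ext}. None of this means the conjecture is false, but the gap between cohomological data and the combinatorial rigidity of strong connectivity (Definition \ref{def:restate_strong_connected_map}) is, as you say, exactly why it remains open.
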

\end{remark}

The remainder of this paper is devoted to proofs of the results listed above.
\section{Technical Results}
In order to give our proofs, we will need a few more notions that will allow us directly to decompose nonnegative quadratics into sums.
The main technical tool we will need is \ref{lem:decompositions}, which will enable us to find decompositions of quadratic forms with high local rank into quadratic forms with smaller local rank using dimension counting arguments.
\subsection{Coordinatizing Maps Induced by Simplicial Maps}
It will be useful to be able to write down induced maps on quadratic forms in terms of partial matrices.
The following lemma is not hard to show.
\begin{lemma}\label{lem:ind_coor}
    If $\phi : \ASC \rightarrow \ASCb$ is a simplicial map, and $q \in \POS(\ASCb)$ is represented by a partial matrix $X$, then $\inphi(q)$ is represented by a partial matrix $Y$, where for each $\{i,j\} \in \skel 1 \ASC$,
    \[
        Y_{i,j} = X_{\phi(i), \phi(j)}.
    \]
\end{lemma}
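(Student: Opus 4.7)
The plan is to unwind the definitions and verify the identity by a direct computation in the polynomial ring $\R[x_k : k \in \VER \ASC]$, reducing modulo $\SRI(\ASC)$ at the end.

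First, recall that the linear map of varieties $\mathcal{V}(\phi):\mathcal{V}(\ASC)\to\mathcal{V}(\ASCb)$ sends $e_k\mapsto e_{\phi(k)}$, so in coordinates,
\[
    \mathcal{V}(\phi)\Bigl(\sum_{k\in\VER\ASC} x_k e_k\Bigr) = \sum_{j\in\VER\ASCb}\Bigl(\sum_{k\in\phi^{-1}(j)} x_k\Bigr)\, e_j.
\]
By definition of $\POS$ as a contravariant functor, $\inphi(q)$ is the quadratic form $x\mapsto q(\mathcal{V}(\phi)(x))$, viewed in $\CoorR_2(\mathcal{V}(\ASC))$.

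Next, write $q=\sum_{(k,l):\{k,l\}\in\skel 1 \ASCb} X_{k,l}\,x_k x_l$ as a sum over ordered pairs with $X$ symmetric, and substitute. Expanding the resulting double product and then reindexing so that the outer sum runs over ordered pairs $(i,j) \in \VER\ASC \times \VER\ASC$ with $k=\phi(i)$, $l=\phi(j)$ produces
\[
    q(\mathcal{V}(\phi)(x)) = \sum_{(i,j):\{\phi(i),\phi(j)\}\in\skel 1 \ASCb} X_{\phi(i),\phi(j)}\, x_i x_j.
\]
Because $\phi$ is simplicial, any $\{i,j\}\in\skel 1 \ASC$ satisfies $\{\phi(i),\phi(j)\}\in\skel 1 \ASCb$, so the coefficient $X_{\phi(i),\phi(j)}$ is well defined for each such pair.

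Finally, pass to $\CoorR_2(\mathcal{V}(\ASC))$ by reducing modulo $\SRI(\ASC)$. Every monomial $x_i x_j$ with $\{i,j\}\notin\skel 1 \ASC$ lies in $\SRI(\ASC)$ and is killed, leaving
\[
    \inphi(q) = \sum_{(i,j):\{i,j\}\in\skel 1 \ASC} X_{\phi(i),\phi(j)}\, x_i x_j.
\]
Matching this against $\inphi(q) = \sum_{\{i,j\}\in\skel 1 \ASC} Y_{i,j}\, x_i x_j$ yields $Y_{i,j} = X_{\phi(i),\phi(j)}$, as claimed. I anticipate no serious obstacle; the lemma is essentially bookkeeping, and the only place demanding care is checking that the reduction modulo $\SRI(\ASC)$ is legitimate, which is immediate from the definitions, together with the observation that the simplicial-map condition guarantees that every coefficient $X_{\phi(i),\phi(j)}$ that survives the reduction is defined.
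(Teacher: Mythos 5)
The paper does not actually supply a proof of this lemma; it is stated with the remark that it ``is not hard to show,'' so there is no argument in the paper to compare against. Your direct computation is correct and fills in the omitted verification: you pull $q$ back along the ring homomorphism $y_j \mapsto \sum_{k\in\phi^{-1}(j)} x_k$ induced by $\mathcal{V}(\phi)$, expand, reindex the resulting double sum over ordered pairs $(i,j)$ with $k=\phi(i)$, $l=\phi(j)$, and reduce modulo $\SRI(\ASC)$ to kill the monomials $x_ix_j$ with $\{i,j\}\notin\skel 1 \ASC$. You also correctly identify the one point that needs checking, namely that the simplicial-map condition guarantees $\{\phi(i),\phi(j)\}\in\skel 1 \ASCb$ whenever $\{i,j\}\in\skel 1 \ASC$, so that $X_{\phi(i),\phi(j)}$ is defined for every surviving pair. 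One small presentational note: working with the sum over ordered pairs with $X$ symmetric is the right convention here (matching the paper's $C_4$ example, where the all-ones matrix represents $\sum_i x_i^2 + 2\sum_{\{i,j\}}x_ix_j$), even though the paper's earlier formula $q=\sum_{\{i,j\}}X_{i,j}x_ix_j$ is written loosely over unordered pairs; your handling is consistent and avoids the factor-of-two pitfall.
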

In order to decompose elements in the image of some induced map $\inphi$, it will also be useful to have some equivalent conditions for the image.
\begin{lemma}\label{lem:image_of_inphi}
    Let $\phi : \ASC \rightarrow \ASCb$ be a surjective simplicial map.
    The image of $\phi^* : \POS(\ASCb) \rightarrow \POS(\ASC)$ are those nonnegative forms which are represented by partial matrices $X$ with the following property: for any $\{i,j\}, \{\ell, k\} \in \skel 1 \ASCb $ with $\phi(\{i,j\}) = \phi(\{\ell, k\})$,
    \[
        X_{i, j} = X_{\ell, k}.
    \]
\end{lemma}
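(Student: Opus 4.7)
The plan is to prove both inclusions separately, with Lemma \ref{lem:ind_coor} handling the easy direction and the surjectivity of $\phi$ (in the face sense) doing most of the work for the hard direction.

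First I would establish the forward inclusion. Suppose $q = \phi^*(p)$ for some $p \in \POS(\ASCb)$ represented by a partial matrix $Y$. By Lemma \ref{lem:ind_coor}, the partial matrix $X$ representing $q$ satisfies $X_{i,j} = Y_{\phi(i),\phi(j)}$ for every $\{i,j\} \in \skel 1 \ASC$. If $\{i,j\}, \{\ell,k\} \in \skel 1 \ASC$ satisfy $\phi(\{i,j\}) = \phi(\{\ell,k\})$, then $\{\phi(i),\phi(j)\} = \{\phi(\ell),\phi(k)\}$ as unordered pairs, so $Y_{\phi(i),\phi(j)} = Y_{\phi(\ell),\phi(k)}$, giving $X_{i,j} = X_{\ell,k}$.

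For the converse, suppose $q \in \POS(\ASC)$ is represented by a partial matrix $X$ satisfying the compatibility condition. I would define a partial matrix $Y$ on $\skel 1 \ASCb$ by picking, for each $\{a,b\} \in \skel 1 \ASCb$, some edge $\{i,j\} \in \skel 1 \ASC$ with $\phi(\{i,j\}) = \{a,b\}$, and setting $Y_{a,b} := X_{i,j}$. Such a preimage exists because $\{a,b\}$ is a face of $\ASCb$ and $\phi$ is face-surjective, and the value is independent of the choice by the compatibility hypothesis. So $Y$ is well-defined on all of $\skel 1 \ASCb$ and represents some quadratic $p \in \CoorR_2(\mathcal{V}(\ASCb))$. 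By construction, $\phi^*(p) = q$ via Lemma \ref{lem:ind_coor}.

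The main step is verifying that $p \in \POS(\ASCb)$, i.e., that $Y|_{F'}$ is PSD for every face $F' \in \ASCb$. By face-surjectivity of $\phi$, pick $F \in \ASC$ with $\phi(F) = F'$. Choose a subset $F_0 \subseteq F$ on which $\phi|_{F_0} : F_0 \to F'$ is a bijection; since $\ASC$ is closed under taking subsets, $F_0 \in \ASC$, and so $X|_{F_0}$ is PSD by nonnegativity of $q$. The bijection $\phi|_{F_0}$ then identifies $X|_{F_0}$ with $Y|_{F'}$ entrywise, so $Y|_{F'}$ is PSD as well. I expect this PSDness step to be the main obstacle, since it requires combining face-surjectivity with the ability to prune a preimage face down to a bijective copy of the target face; the compatibility condition on $X$ is what guarantees that different preimage choices give the same restriction and, indirectly, that we never get an inconsistent value of $Y$.
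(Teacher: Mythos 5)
Your proof is correct and follows essentially the same route as the paper's: define $Y$ on $\skel 1 \ASCb$ by choosing preimage edges (well-defined by the compatibility hypothesis), use Lemma \ref{lem:ind_coor} to check $\phi^*(p) = q$, and verify PSDness of $Y|_{F'}$ by pruning a preimage face $F$ down to a subset $F_0$ on which $\phi$ restricts to a bijection with $F'$. The only difference is that you explicitly write out the forward inclusion (that images of $\phi^*$ satisfy the compatibility condition), which the paper treats as immediate from Lemma \ref{lem:ind_coor} and leaves unstated.
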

\begin{proof}
    Suppose that $q \in \POS(\ASC)$ has the desired property; we wish to show that $q$ is the image of some $q' \in \ASCb$.
    Let $Y$ be a partial matrix representing a quadratic form $r \in \CoorR_2(\mathcal{V}(\ASCb))$ defined so that for any $\{i,j\} \in \EDG{\ASCb}$,
    \[
        Y_{i,j} = X_{a, b},
    \]
    for some (any) $\{a,b\}$ so that $\phi(\{a,b\}) = \{i,j\}$.

    By lemma \ref{lem:ind_coor}, we have that the partial matrix $X'$ representing $\phi^*(r)$ satisfies
    \[
        X'_{a,b} = Y_{\phi(a),\phi(b)} = X_{a.b}.
    \]

    Now, we must show that $r$ is nonnegative, which means that the submatrices of $Y$ corresponding to faces of $\ASCb$ must be PSD.
    Let $F \in \ASCb$ be a face, and let $G \in \ASC$ so that $\phi(G) = F$.
    By passing to a subset of $G$, we can assume that $\phi$ restricts to a bijection of $F$ and $G$.

    Let $Y|_F$ denote the submatrix of $Y$ defined by $F$,

    We see then that under this bijection, $Y|_F$ and $X|_G$ are equal, i.e. for any $i,j \in G$,
    \[Y_{\phi(i),\phi(j)} = X_{i, j}.\]
    Because $X|_G$ is PSD, we have that $Y|_F$ is also PSD.
    Hence, $Y$ represents some quadratic form $\POS(\ASCb)$ whose image is $q$, as desired.
\end{proof}

\subsection{Positive Semidefinite Matrices as a Moduli Space}\label{subsec:psd_matrices_moduli}
Positive semidefinite matrices are of fundamental interest in a number of fields.
We will review some facts about positive semidefinite matrices and their characterization as Gram matrices.
\begin{definition}[Positive Semidefinite]\label{def:psd}
    A matrix $M \in \Sym_n$ is said to be positive semidefinite (PSD) if it satisfies any of the following (equivalent) conditions.
    \begin{enumerate}
        \item All eigenvalues of $M$ are nonnegative.
        \item The quadratic form $v^{\intercal}Mv$ is nonnegative.
        \item The quadratic form $v^{\intercal}Mv$ is sums of squares.
        \item There exists a Hilbert space $H$ and vectors $v_1 ,\dots, v_n$ so that in coordinates, $M_{ij} = \langle v_i, v_j \rangle$.
    \end{enumerate}
\end{definition}

\begin{definition}[Gram Matrix]\label{def:gram_matrix}
    If $H$ is a Hilbert space, and $v_1 ,\dots, v_n \in H$, then the Gram matrix $v_1 ,\dots, v_n$ is a matrix $G(v_1 ,\dots, v_n)$ where
    \[
        G(v_1 ,\dots, v_n)_{ij} = \langle v_i, v_j\rangle.
    \]
\end{definition}
In some cases, we will use the vertices of a simplicial complex as indices in a matrix, and we will want to use those same vertices to index the vectors.
In this case, we might use notation like $G(v_i : i \in \VER{\ASC})$ to indicate this.

A linear map between Hilbert spaces $L : H_1\rightarrow H_2$ is said to be \textbf{orthogonal} if $\langle L(v), L(w)\rangle_{H_2} = \langle v, w\rangle_{H_1}$.
The Gram matrix of a collection of vectors in a Hilbert space determines a collection of vectors up to orthogonal transformation, (see for example \cite[Theorem 7.3.11]{horn2013matrix}).
\begin{thm}[Gram Matrices Determine Vectors up to Orthogonal Transformation]\label{thm:gram_matrices_moduli}
    Suppose  $v_1 ,\dots, v_n \in H_1$, $w_1 ,\dots, w_n \in H_2$, and assume that $v_1,\dots, v_n$ span $H_1$. Then, $G(v_1 ,\dots, v_n) = G(w_1 ,\dots, w_n)$ if and only if  there is an orthogonal map $L : H_1 \rightarrow H_2$ so that for each $i \in [n]$,
    \[
        L(v_i) = w_i.
    \]
\end{thm}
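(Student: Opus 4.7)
The plan is to prove both directions of the biconditional, with the forward direction (from the existence of $L$ to the equality of Gram matrices) being essentially immediate, and the reverse direction being the substantive content.

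For the forward direction, I would simply note that if $L : H_1 \to H_2$ is an orthogonal map with $L(v_i) = w_i$, then by definition of orthogonality, $\langle w_i, w_j \rangle_{H_2} = \langle L(v_i), L(v_j) \rangle_{H_2} = \langle v_i, v_j \rangle_{H_1}$, so the Gram matrices coincide entry by entry.

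For the reverse direction, the strategy is to define $L$ on the spanning set by $L(v_i) = w_i$ and extend by linearity, then check that this prescription is well-defined and inner-product preserving. The well-definedness step is the main technical point: I need to show that if $\sum_i a_i v_i = 0$ for scalars $a_i$, then $\sum_i a_i w_i = 0$ as well. Using the shared Gram matrix $G$, observe that the hypothesis $\sum_i a_i v_i = 0$ implies $\sum_i a_i G_{ij} = \langle \sum_i a_i v_i, v_j \rangle = 0$ for every $j$. Then compute
\[
\Bigl\| \sum_i a_i w_i \Bigr\|^2 = \sum_{i,j} a_i a_j \langle w_i, w_j \rangle = \sum_{i,j} a_i a_j G_{ij} = \sum_j a_j \Bigl(\sum_i a_i G_{ij}\Bigr) = 0,
\]
which forces $\sum_i a_i w_i = 0$ as needed. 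This lets me extend $L$ linearly to the span of the $v_i$, which by hypothesis is all of $H_1$.

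Once $L$ is defined, preservation of inner products follows from a direct calculation: for arbitrary $v = \sum_i a_i v_i$ and $v' = \sum_j b_j v_j$, bilinearity gives $\langle L(v), L(v') \rangle = \sum_{i,j} a_i b_j G_{ij} = \langle v, v' \rangle$. The main obstacle, such as it is, is really just the well-definedness step; everything else is formal. No completeness assumption on $H_2$ is needed because $L(H_1)$ lands in the finite-dimensional subspace spanned by the $w_i$.
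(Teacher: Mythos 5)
Your proof is correct, and it is the standard argument: the well-definedness of the prescription $L(v_i) = w_i$ reduces to checking that relations among the $v_i$ are respected, which you establish by the computation $\bigl\|\sum_i a_i w_i\bigr\|^2 = \sum_{i,j} a_i a_j G_{ij} = 0$ whenever $\sum_i a_i v_i = 0$; extending by linearity and verifying inner-product preservation is then routine. Note that the paper does not actually prove this statement --- it cites it directly to Horn and Johnson \cite[Theorem~7.3.11]{horn2013matrix} --- so there is no in-text proof to compare against; your argument is self-contained and fills that gap cleanly. One small point worth being explicit about, given that the paper works entirely over $\R$: your expansion $\bigl\|\sum_i a_i w_i\bigr\|^2 = \sum_{i,j} a_i a_j\langle w_i, w_j\rangle$ tacitly uses real scalars (otherwise one of the coefficients would need to be conjugated), which is consistent with the paper's conventions but should be stated if the lemma were ever used in a complex setting.
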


Because a Gram matrix uniquely specifies a collection of vectors up to orthogonal transformation, many basic properties about a PSD matrix that can be read off of a Gram matrix representation.
\begin{lemma}[Rank]\label{lem:gram_matrices_rank}
    If $M = G(v_1 ,\dots, v_n)$, then the rank of $M$ is equal to the dimension of $\SPAN(v_1 ,\dots, v_n)$.
\end{lemma}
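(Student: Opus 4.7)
The plan is to identify $\ker M$ with the space of linear dependencies among $v_1, \dots, v_n$, and then apply rank–nullity. The first step is a direct computation: for any coefficient vector $c = (c_1, \dots, c_n)^\intercal \in \R^n$, the $i$-th entry of $Mc$ is
\[
    (Mc)_i = \sum_j \langle v_i, v_j\rangle c_j = \Bigl\langle v_i, \sum_j c_j v_j\Bigr\rangle.
\]
From this formula it is immediate that $\sum_j c_j v_j = 0$ implies $Mc = 0$. For the converse I would use positivity of the inner product: if $Mc = 0$ then
\[
    0 = c^\intercal M c = \Bigl\langle \sum_j c_j v_j,\; \sum_j c_j v_j\Bigr\rangle = \Bigl\|\sum_j c_j v_j\Bigr\|^2,
\]
so $\sum_j c_j v_j = 0$.

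Having established that $\ker M$ equals the kernel of the linear map $T : \R^n \to H$ sending the standard basis vector $e_j$ to $v_j$, I would conclude by rank–nullity. The image of $T$ is by definition $\SPAN(v_1, \dots, v_n)$, so
\[
    \rank(M) = n - \dim \ker M = n - \dim \ker T = \dim \SPAN(v_1, \dots, v_n),
\]
which is exactly the claim.

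There is no real obstacle here — this is standard linear algebra, and the only step that requires any care is the direction $Mc = 0 \Rightarrow \sum_j c_j v_j = 0$, which relies on the inner product being positive-definite rather than merely symmetric bilinear.
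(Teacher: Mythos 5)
Your proof is correct and is the standard argument; the paper states this lemma without proof (regarding it as a routine consequence of the Gram matrix definition), and what you have written is essentially the expected justification. The key step you correctly flag — that $Mc = 0$ forces $\sum_j c_j v_j = 0$ via $c^\intercal M c = \|\sum_j c_j v_j\|^2 = 0$ — is exactly where positive-definiteness of the inner product enters, and the rank–nullity conclusion then identifies $\rank M$ with $\dim \mathrm{im}\, T = \dim \SPAN(v_1, \dots, v_n)$.
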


Moreover, basic operations on the level of Gram matrices correspond to basic operations on the level of Hilbert spaces.
We will list some of these operations without proof, for the sake of completeness, but the proofs follow easily from the definitions.
The fact which is key to these operations is that they operate entrywise on a matrix, so that they can also be applied to partial matrices, and by extension, nonnegative quadratics over Stanley-Reisner varieties.
\begin{lemma}
    If $v_1 ,\dots, v_n \in H_1$, $w_1 ,\dots, w_n \in H_2$, and $v_i \oplus w_i \in H_1 \oplus H_2$ denotes the direct sum of $v_i$ and $w_i$, then
    \[
        G(v_1 ,\dots, v_n) +  
        G(w_1 ,\dots, w_n) = 
        G(v_1 \oplus w_1,\dots, w_n \oplus v_n).
    \]\label{lem:gram_matrices_sum}

\end{lemma}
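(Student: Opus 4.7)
The plan is to verify the claimed matrix identity entry by entry, using only the defining property of the inner product on a direct sum of Hilbert spaces. Recall that $H_1 \oplus H_2$ is equipped with the inner product
\[
    \langle a \oplus b,\; c \oplus d\rangle_{H_1 \oplus H_2} \;=\; \langle a, c\rangle_{H_1} + \langle b, d\rangle_{H_2}
\]
for $a, c \in H_1$ and $b, d \in H_2$. This is essentially the only fact about direct sums of Hilbert spaces that the argument requires.

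Applied to the pair $v_i \oplus w_i$ and $v_j \oplus w_j$, this formula immediately gives that the $(i,j)$ entry of $G(v_1 \oplus w_1, \dots, v_n \oplus w_n)$ equals $\langle v_i, v_j\rangle_{H_1} + \langle w_i, w_j\rangle_{H_2}$, which is by definition the $(i,j)$ entry of $G(v_1, \dots, v_n) + G(w_1, \dots, w_n)$. Since this holds for every choice of indices $i, j \in [n]$, the two matrices agree entry by entry, and the lemma follows.

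There is no real obstacle in the argument; the content is a direct unpacking of the definition of the inner product on a direct sum. The reason the lemma is being recorded here is presumably so that addition of two sum-of-squares quadratic forms, i.e.\ addition of the PSD matrices that represent them via Definition \ref{def:gram_matrix}, can be translated into the Hilbert-space operation of forming direct sums of the underlying Gram vector systems. Combined with Theorem \ref{thm:gram_matrices_moduli} (which controls vector systems up to orthogonal equivalence) and Lemma \ref{lem:gram_matrices_rank} (which reads off the rank from the span), this dictionary gives the algebraic language needed for the decomposition-of-quadratic-forms arguments that the paper has advertised in its technical section. In that sense, the lemma is preparatory bookkeeping rather than a result that needs its own proof strategy.
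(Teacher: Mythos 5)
Your proof is correct. The paper does not actually spell out a proof of this lemma (it states that the Gram-matrix operations ``follow easily from the definitions''), and your entry-by-entry verification using the inner product on $H_1 \oplus H_2$ is exactly the intended argument.
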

\begin{lemma}\label{lem:hadamard}
    If $v_1 ,\dots, v_n \in H_1$ and $w_1 ,\dots, w_n \in H_2$, so that $v_i \otimes w_i \in H_1 \otimes H_2$ in the tensor product of $H_1$ and $H_2$, then
    \[
        G(v_1 ,\dots, v_n) \star G(w_1 ,\dots, w_n) = G(v_1\otimes w_1, \dots, v_n\otimes w_n).
    \]
    (recall that $\had$ denotes the \textbf{Hadamard product})

\end{lemma}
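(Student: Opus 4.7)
The plan is to verify the identity entrywise, leveraging the defining property of the inner product on a tensor product of Hilbert spaces. Recall that for Hilbert spaces $H_1$ and $H_2$, the inner product on $H_1 \otimes H_2$ is characterized (on pure tensors) by
\[
    \langle a \otimes b,\, c \otimes d \rangle_{H_1 \otimes H_2} = \langle a, c\rangle_{H_1} \cdot \langle b, d\rangle_{H_2}.
\]
So my approach is simply to compute the $(i,j)$ entry of $G(v_1 \otimes w_1, \dots, v_n \otimes w_n)$ using this identity and match it against the $(i,j)$ entry of the Hadamard product on the left-hand side.

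Concretely, I would first unfold the definition of the Gram matrix on the right-hand side to get $\langle v_i \otimes w_i, v_j \otimes w_j \rangle_{H_1 \otimes H_2}$. Applying the tensor-product inner product identity gives $\langle v_i, v_j\rangle_{H_1} \langle w_i, w_j\rangle_{H_2}$. By the definitions of the two Gram matrices on the left-hand side, this is exactly $G(v_1,\dots,v_n)_{ij} \cdot G(w_1,\dots,w_n)_{ij}$, which is the $(i,j)$ entry of $G(v_1,\dots,v_n) \star G(w_1,\dots,w_n)$ by the definition of the Hadamard product.

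Since this equality holds for every pair $(i,j)$, the two matrices are equal, completing the proof. The only subtlety, and thus the ``main obstacle'' (though it is very mild), is making sure the inner product on $H_1 \otimes H_2$ is correctly understood: one should either take the algebraic tensor product and equip it with the unique inner product satisfying the identity above, or pass to its Hilbert-space completion. Either convention works here because we are dealing with only finitely many vectors, so their span lies in the algebraic tensor product and no completion issues arise.
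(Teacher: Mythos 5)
Your proof is correct and is exactly the kind of entrywise verification the paper has in mind: the paper explicitly states that Lemma~\ref{lem:hadamard} and its neighbors are ``listed without proof, for the sake of completeness, but the proofs follow easily from the definitions,'' and your computation via $\langle v_i \otimes w_i, v_j \otimes w_j\rangle = \langle v_i, v_j\rangle\langle w_i, w_j\rangle$ is that easy proof. Your remark about the algebraic tensor product sufficing for finitely many vectors is a reasonable aside, though unnecessary here since the paper works in finite dimensions throughout.
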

\begin{lemma}\label{lem:schur_proj}
    If $v_1 ,\dots, v_n \in H$, and $\pi : H \rightarrow H$ denotes the projection of $H$ onto the orthogonal complement of $v_1$, then
    \[
        G(\pi(v_2) ,\dots, \pi(v_n)) = G(v_1 ,\dots, v_n) / (1,1),
    \]
    where $A / (1,1)$ denotes the \textbf{Schur complement} of $A$ with respect to the $1\times 1$ submatrix $A|_{\{1\}}$, given by
    \[
        A / (1,1) = A|_{\{2 ,\dots, n\}} - A_{11}^{-1} A_{1\cdot}A_{\cdot 1}.
    \]
    ($A_{\cdot i}$denotes the $i^{th}$ column of $A$ and  $A_{i \cdot}$ denotes the $i^{th}$ row of $A$.)
\end{lemma}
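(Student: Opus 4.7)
The plan is to do an entry-by-entry calculation using the explicit formula for orthogonal projection. Assuming $v_1 \neq 0$ (equivalently $A_{11} = \langle v_1, v_1 \rangle > 0$, which is needed for the Schur complement as displayed to be defined), the orthogonal projection of any $v \in H$ onto $v_1^\perp$ has the closed form
\[
    \pi(v) = v - \frac{\langle v, v_1\rangle}{\langle v_1, v_1\rangle}\, v_1.
\]

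Given this, for each pair $i, j \in \{2,\dots,n\}$ I would expand
\[
    \langle \pi(v_i), \pi(v_j)\rangle = \left\langle v_i - \tfrac{\langle v_i, v_1\rangle}{\langle v_1, v_1\rangle} v_1,\; v_j - \tfrac{\langle v_j, v_1\rangle}{\langle v_1, v_1\rangle} v_1\right\rangle
\]
bilinearly. Two of the four terms are cross terms of the form $-\tfrac{\langle v_i, v_1\rangle \langle v_1, v_j\rangle}{\langle v_1, v_1\rangle}$, while the last term contributes $+\tfrac{\langle v_i, v_1\rangle \langle v_1, v_j\rangle}{\langle v_1, v_1\rangle^2} \langle v_1, v_1\rangle = +\tfrac{\langle v_i, v_1\rangle \langle v_1, v_j\rangle}{\langle v_1, v_1\rangle}$, so exactly one of the cross terms survives, yielding
\[
    \langle \pi(v_i), \pi(v_j)\rangle = \langle v_i, v_j\rangle - \frac{\langle v_i, v_1\rangle\langle v_1, v_j\rangle}{\langle v_1, v_1\rangle} = A_{ij} - A_{11}^{-1}\, A_{i1} A_{1j}.
\]
This is exactly the $(i,j)$ entry of $A|_{\{2,\dots,n\}} - A_{11}^{-1} A_{1\cdot} A_{\cdot 1}$, so the two matrices agree entrywise.

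There is no real obstacle here; the proof is a mechanical verification. The only caveat worth noting is the degenerate case $v_1 = 0$, in which $A_{11} = 0$ and the Schur complement as written is undefined, but the identity still holds in the limiting sense since $\pi$ becomes the identity and $A_{i1} = A_{1j} = 0$ for all $i, j$. Since all subsequent applications of the lemma will invoke it with $A_{11} > 0$, the nondegenerate case suffices.
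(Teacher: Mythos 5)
Your proof is correct; the paper states this lemma without proof (it's listed among facts that "follow easily from the definitions"), and your mechanical entrywise expansion of $\langle\pi(v_i),\pi(v_j)\rangle$ is exactly the intended verification. Your remark on the degenerate case $v_1 = 0$ is also accurate and worth keeping in mind.
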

\subsection{Decompositions for $\POS(\ASC)$}
In this paper, we will want to argue that elements of $\POS(\ASC)$ with large local rank are not extreme points.
For that, we will need ways of decomposing $q \in \POS(\ASC)$ into a sum of $q_1, q_2 \in \POS(\ASC)$, where $q_1$ and $q_2$ are lower rank than $q$.
To do this, we will characterize the ways of decomposing PSD matrices, and more generally, nonnegative quadratics over Stanley-Reisner varieties, into sums of PSD matrices.

The lemmas \ref{lem:ind_coor}, \ref{lem:gram_matrices_sum} and \ref{thm:gram_matrices_moduli} can be combined together to understand the possible decompositions of a PSD matrix $X$ as the sum of two other PSD matrices in a more structured way.
Specifically, let $v_1 ,\dots, v_n \in H$ for some Hilbert space $H$.
Further let $T : H \rightarrow R$ be an orthogonal map of Hilbert spaces, and let $L \subseteq R$ be some linear subspace of $R$.
Let $\pi_L : R \rightarrow L$ denote the projection of $R$ onto $L$.
We say that the \textbf{$(T, L)$-decomposition} of $G(v_1 ,\dots, v_n)$ is the decomposition
\[
    G(v_1 ,\dots, v_n) = G(\pi_L(Tv_1) ,\dots, \pi_L(Tv_n))+ G(\pi_{L^{\intercal}}(Tv_1) ,\dots, \pi_{L^{\intercal}}(Tv_n)).\tag{*}
\]
We will usually be interested in the case when $T$ is the identity transformation.
\begin{lemma}\label{lem:TLdecomp}
    The equation (*) in the definition of a $(T,L)$ decomposition is valid.
    Conversely, whenever $X = G(v_1 ,\dots, v_n)$ and $G(v_1 ,\dots, v_n) = G(w_1 ,\dots, w_n) + G(u_1 ,\dots, u_n)$, then this has the form of a $(T,L)$-decomposition.
\end{lemma}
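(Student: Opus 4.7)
The plan is to handle the two directions separately. For the forward statement that equation (*) is valid, the key observation is that since $L$ and $L^{\perp}$ are orthogonal complements in $R$, every vector $Tv_i$ decomposes as $Tv_i = \pi_L(Tv_i) + \pi_{L^{\perp}}(Tv_i)$ with the two summands mutually orthogonal. So I would expand
\[
\langle \pi_L(Tv_i), \pi_L(Tv_j)\rangle + \langle \pi_{L^{\perp}}(Tv_i), \pi_{L^{\perp}}(Tv_j)\rangle
= \langle Tv_i, Tv_j\rangle,
\]
using that the cross terms $\langle \pi_L(Tv_i), \pi_{L^{\perp}}(Tv_j)\rangle$ vanish by orthogonality of $L$ and $L^{\perp}$. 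Finally, because $T$ is orthogonal, $\langle Tv_i, Tv_j\rangle = \langle v_i, v_j\rangle$, giving the claimed entrywise equality of Gram matrices.

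For the converse, suppose $G(v_1,\dots,v_n) = G(w_1,\dots,w_n) + G(u_1,\dots,u_n)$ with $w_i \in H_w$ and $u_i \in H_u$. The idea is to reconstruct $T$ and $L$ from this data. Using Lemma \ref{lem:gram_matrices_sum}, I would combine the two summands into a single Gram matrix in the direct sum:
\[
G(w_1,\dots,w_n) + G(u_1,\dots,u_n) = G(w_1 \oplus u_1,\dots, w_n \oplus u_n),
\]
living in $R := H_w \oplus H_u$. Hence $G(v_1,\dots,v_n) = G(w_1 \oplus u_1,\dots, w_n \oplus u_n)$.

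Now I would apply Theorem \ref{thm:gram_matrices_moduli}. Without loss of generality we may take $H$ to be the span of $v_1,\dots,v_n$ (any component of the ambient Hilbert space orthogonal to this span is irrelevant and $T$ can be extended arbitrarily there). The theorem then provides an orthogonal map $T \colon H \to R$ with $Tv_i = w_i \oplus u_i$ for every $i$. Taking $L = H_w \oplus \{0\} \subseteq R$, the projections are $\pi_L(Tv_i) = w_i \oplus 0$ and $\pi_{L^{\perp}}(Tv_i) = 0 \oplus u_i$, which (after identifying $H_w \oplus \{0\}$ with $H_w$ and $\{0\} \oplus H_u$ with $H_u$) recover the original Gram matrices and exhibit the decomposition as a $(T,L)$-decomposition.

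I do not expect a serious obstacle. The only mild subtlety is that Theorem \ref{thm:gram_matrices_moduli} requires the $v_i$ to span the source Hilbert space, which is why I restrict $H$ to their span at the outset; this is harmless because both the projections and the Gram matrices depend only on the $Tv_i$'s. The result is essentially a direct translation of the fact that Gram matrix decompositions correspond to orthogonal decompositions of the underlying vectors into two mutually orthogonal components.
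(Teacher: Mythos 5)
Your proof is correct and follows essentially the same approach as the paper: expand into an orthogonal direct sum for the forward direction, and for the converse combine via Lemma~\ref{lem:gram_matrices_sum} and apply Theorem~\ref{thm:gram_matrices_moduli} to extract $T$, then take $L$ to be the first summand. Your explicit handling of the spanning hypothesis in Theorem~\ref{thm:gram_matrices_moduli} (by restricting $H$ to $\SPAN(v_1,\dots,v_n)$) is a small point of care that the paper's proof leaves implicit.
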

\begin{proof}
    Firstly, we show that the definition of a $(T,L)$-decomposition is in fact valid. Because $T : H \rightarrow R$ is orthogonal, we have the equality of Gram matrices,
    \[
        G(v_1 ,\dots, v_n) = G(Tv_1, \dots, Tv_n).
    \]
    Notice that the decomposition $H = L \oplus L^{\intercal}$ implies that the map $\pi_L \oplus \pi_{L^{\intercal}}$ is orthogonal as well, so that
    \[
        G(v_1 ,\dots, v_n) = G(\pi_L(Tv_1)\oplus G(\pi_{L^{\intercal}}(Tv_1)), \dots, \pi_L(Tv_n) \oplus\pi_{L^{\intercal}}(Tv_n)).
    \]
    Finally, we apply lemma \ref{lem:gram_matrices_sum} to obtain that
    \[
        G(v_1 ,\dots, v_n) = G(\pi_L(Tv_1) ,\dots, \pi_L(Tv_n))+ G(\pi_{L^{\intercal}}(Tv_1) ,\dots, \pi_{L^{\intercal}}(Tv_n)).
    \]

    On the other hand, we can apply \ref{lem:gram_matrices_sum} in the following expression:
    \begin{align*}
        G(v_1 ,\dots, v_n) &= G(w_1 ,\dots, w_n) + G(u_1 ,\dots, u_n)\\
                       &= G(w_1 \oplus u_1,\dots, w_n\oplus u_n).
    \end{align*}
    Here, $w_1 ,\dots, w_n \in W$, and $u_1 ,\dots, u_n \in U$ for Hilbert spaces $U$ and $W$.
    Theorem \ref{thm:gram_matrices_moduli} then implies that there is an orthogonal transformation $T : H \rightarrow W\oplus U$ sending $v_i$ to $w_i\oplus u_i$.
    Setting $L = W$ in this decomposition, we obtain our desired $(T, L)$-decomposition.
\end{proof}
Now, we can generalize the notion of a $(T,L)$-decomposition for chordal covers.

Let $\phi : \ASC \rightarrow \ASCb$ be a chordal cover, and let $q \in \POS(\ASCb)$.
Let $X$ be a partial matrix representing $\inphi(q)$, and let $\hat{X}$ be a PSD matrix completing $X$ (which exists because $\ASC$ is a chordal complex).
Suppose that $\hat{X} = G(v_1 ,\dots, v_n)$ for $v_1 ,\dots, v_n \in H$.
A pair $(T,L)$ is said to be \textbf{compatible} with $\phi$ and $v_1 ,\dots, v_n \in H$ if for all pairs $\{i,j\} \in \ASC$ and $\{k,\ell\} \in \ASC$ so that $\phi(\{i,j\}) = \phi(\{k,\ell\})$, we have

\[ 
    \langle \pi_L(T(v_i)), \pi_L(T(v_j))\rangle = \langle \pi_L(T(v_k)), \pi_L(T(v_{\ell}))\rangle.
\]

Let $\hat{X} = \hat{Y} + \hat{Z}$ be $(T,L)$-decomposition, for some $(T,L)$ compatible with $\phi$ and the $v_i$.
 We say that it \textbf{induces} a decomposition $q = q_1 + q_2$ if $X$ represents $\phi^*(q)$, $Y$ represents $\phi^*(q_1)$, and $Z$ represents $\phi^*(q_2)$.
\begin{lemma} \label{lem:decompositions}
    Let $\phi : \ASCb \rightarrow \ASC$ be a chordal cover, and let $q \in \POS(\ASC)$.
    Let $X$ be a partial matrix representing $\phi^*(q)$.

    Let $\hat{X} = G(v_1, \dots, v_n)$ be any PSD completion of $X$. Then, any $(T,L)$-decomposition of $\hat{X}$ compatible with $\phi$ and $v_1, \dots, v_n$ induces a decomposition $q = q_1 + q_2$ for $q, q_1, q_2 \in \POS(\ASC)$.

    Conversely, for any decomposition $q = q_1 + q_2$, there exists some PSD completion of $X$, $\hat{X}=  G(v_1, \dots, v_n)$, and a $(T,L)$-decomposition of $\hat{X}$ compatible with $\phi$ and $v_1, \dots, v_n$ inducing the decomposition.
\end{lemma}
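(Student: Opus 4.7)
The plan is to deduce the result from Lemma \ref{lem:TLdecomp} by using Lemma \ref{lem:image_of_inphi} to translate between partial matrices on $\ASCb$ and partial matrices on $\ASC$, with Theorem \ref{thm:nonnegative_quads_chordal} providing PSD completions on the chordal side.

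For the forward direction, begin with a $(T,L)$-decomposition $\hat{X} = \hat{Y} + \hat{Z}$ compatible with $\phi$ and the chosen Gram vectors $v_1,\dots,v_n$. Let $Y$ and $Z$ denote the restrictions of $\hat{Y}$ and $\hat{Z}$ to entries indexed by $\skel 1 \ASCb$. Unwinding the definition, the compatibility condition says precisely that $Y_{ij} = Y_{k\ell}$ (and analogously for $Z$) whenever $\phi(\{i,j\}) = \phi(\{k,\ell\})$. By Lemma \ref{lem:image_of_inphi}, this means $Y$ and $Z$ are of the form $\phi^*(q_1)$ and $\phi^*(q_2)$ for some $q_1, q_2 \in \CoorR_2(\mathcal{V}(\ASC))$, provided we can show these forms are nonnegative on $\ASC$. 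To verify nonnegativity, fix any facet $F' \in \ASC$; by surjectivity of $\phi$ on faces, there exists $F \in \ASCb$ such that $\phi$ restricts to a bijection $F \to F'$, and then the matrix representing $q_1|_{F'}$ coincides (under this bijection) with the principal submatrix $\hat{Y}|_F$, which is PSD as a submatrix of the globally PSD matrix $\hat{Y}$; the same argument works for $q_2$. Finally, $q_1 + q_2 = q$ follows by applying $\phi^*$ to both sides and invoking injectivity of $\phi^*$ (which holds because $\phi$ is surjective).

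For the converse, start with a decomposition $q = q_1 + q_2$ in $\POS(\ASC)$ and apply $\phi^*$ to obtain $\phi^*(q) = \phi^*(q_1) + \phi^*(q_2)$ in $\POS(\ASCb)$. Because $\ASCb$ is chordal, Theorem \ref{thm:nonnegative_quads_chordal} guarantees both summands are sums of squares and hence admit PSD completions $\hat{Y}$ and $\hat{Z}$ of their representing partial matrices. Then $\hat{X} := \hat{Y} + \hat{Z}$ is a PSD completion of $X$. Writing $\hat{X} = G(v_1,\dots,v_n)$, the proof of Lemma \ref{lem:TLdecomp} produces an orthogonal map $T$ and a subspace $L$ realizing $\hat{X} = \hat{Y} + \hat{Z}$ as a $(T,L)$-decomposition, with $\hat{Y}_{ij} = \langle \pi_L(Tv_i), \pi_L(Tv_j)\rangle$. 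The compatibility condition reduces to the statement that $\hat{Y}_{ij} = \hat{Y}_{k\ell}$ whenever $\{i,j\}, \{k,\ell\}$ are edges of $\ASCb$ with the same $\phi$-image, which is exactly the characterization of the image of $\phi^*$ from Lemma \ref{lem:image_of_inphi} applied to $\phi^*(q_1)$.

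The main obstacle is the bookkeeping in the forward direction: the compatibility condition only directly controls entries of $\hat{Y}, \hat{Z}$ indexed by edges of $\ASCb$, so one must argue that nonnegativity of $q_1, q_2$ on all of $\ASC$ (which requires \emph{every} face restriction to be PSD) still follows, using only the PSD property of the full completions together with surjectivity of $\phi$ on faces. Once that is handled the converse is essentially formal, packaging the chordal PSD completions into Gram data and quoting Lemma \ref{lem:TLdecomp}.
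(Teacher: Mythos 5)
Your proof is correct and follows essentially the same route as the paper's: both directions hinge on Lemma \ref{lem:TLdecomp} together with the consistency characterization of $\operatorname{im}\phi^*$ from Lemma \ref{lem:image_of_inphi}, with chordality of $\ASCb$ supplying the PSD completions in the converse. The only cosmetic difference is that you verify nonnegativity of $q_1, q_2$ by checking faces of $\ASC$ directly (using surjectivity of $\phi$ on faces), whereas the paper first notes $Y, Z$ are nonnegative on $\ASCb$ because they restrict the PSD matrices $\hat{Y}, \hat{Z}$, and then applies Lemma \ref{lem:image_of_inphi} as a black box; these are the same argument unfolded differently.
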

\begin{proof}
    Firstly, if $\hat{X} = G(v_1 ,\dots, v_n)$, and $(T,L)$ is compatible with $\phi$ and $v_1 ,\dots, v_n$, then let 
    \[
        \hat{A} = G(\pi_L(T(v_1)) ,\dots, \pi_L(T(v_n))).
    \]

    For any $i,j \in \EDG{\ASCb}$,
    \[
        A_{i,j} = \langle \pi_L(T(v_i)), \pi_L(T(v_j))) \rangle.
    \]

    If $\phi(\{i,j\}) = \phi(\{k,\ell\})$, then because $(T,L)$ is compatible with $\phi$ and $v_1 ,\dots, v_n$, we obtain
    \[
        A_{i,j} = \langle \pi_L(T(v_i)), \pi_L(T(v_j))) \rangle = \langle \pi_L(T(v_k)), \pi_L(T(v_{\ell}))) \rangle = A_{k,\ell}.
    \]

    We conclude from \ref{lem:image_of_inphi} that $A$ is in the image of $\phi^*$. Similarly, we have that $B$ is in the image of $\inphi$.
    We then know that $A$ represents $\phi^*(q_1)$, and $B$ represents $\phi^*(q_2)$, for $q_1,q_2\in \POS(\ASC)$, so that $\phi^*(q) = \phi^*(q_1+q_2)$.
    Because $\phi$ is surjective, $\phi^*$ is injective, and so we have that $q = q_1+q_2$.

    For the other direction, let $q=q_1+q_2$.
    Let $A$ and $B$ be partial matrices representing $\inphi(q_1)$ and $\inphi(q_2)$, and let $\hat{A}$ and $\hat{B}$ be corresponding PSD completions of $A$ and $B$.
    Letting $X = A+B$, and $\hat{X} = \hat{A}+\hat{B}$, we see that $\hat{X}$ is in fact a PSD completion of $X$.
    Let $\hat{X} = G(v_1 ,\dots, v_n)$.
    $\hat{X}=\hat{A}+\hat{B}$ is a $(T,L)$-decomposition of $X$ from lemma \ref{lem:TLdecomp}, so that
    \[
        \hat{A} = G(\pi_L(T(v_1)) ,\dots, \pi_L(T(v_n))).
    \]

    Moreover, from lemma \ref{lem:image_of_inphi}, we obtain that because $A$ represents a form in the image of $\inphi$, whenever $\phi(\{i,j\}) = \phi(\{k,\ell\})$,
    \[
        A_{i,j} = A_{k,\ell}.
    \]
    From this, we obtain that 
    \[
        \langle \pi_L(T(v_i)), \pi_L(T(v_j)) \rangle = A_{i,j} = A_{k,\ell} = 
        \langle \pi_L(T(v_k)), \pi_L(T(v_{\ell})).
    \]

    Hence, this $(T,L)$ decomposition is compatible with $\phi$ and $v_1 ,\dots, v_n$, as desired.
\end{proof}

\subsection{Chordal Covers and Local Rank}\label{subsec:local_rank}
We give a proof of lemma \ref{lem:local_rank_cover}, which will be made easier with the following lemma.
\begin{lemma}
    Let $\ASC$ be a chordal complex, and $q \in \POS(\ASC)$, then $\locr(q) = \rank q$.
\end{lemma}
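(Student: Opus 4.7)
The plan is to prove both inequalities separately, with $\locr(q) \le \rank(q)$ being immediate and $\rank(q) \le \locr(q)$ requiring an inductive argument that exploits the chordal structure. For the easy direction, write $q = \ell_1^2 + \cdots + \ell_k^2$ for $k = \rank(q)$; then for any $F \in \ASC$, the pullback $i_F^*(q) = \sum_j (i_F^*\ell_j)^2$ is SOS with $k$ terms, so $\rank(i_F^*(q)) \le k$. Maximizing over $F$ gives $\locr(q) \le \rank(q)$.

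For the reverse inequality, let $r = \locr(q)$. Since $\ASC$ is chordal, Theorem \ref{thm:nonnegative_quads_chordal} guarantees $q \in \SOS(\ASC)$, so the partial matrix $X$ representing $q$ admits a PSD completion, and $\rank(q)$ equals the minimum rank over all such completions. I would show by induction on $|\VER\ASC|$ that $X$ admits a PSD completion of rank at most $r$. Using the fact that every nonempty chordal graph has a simplicial vertex (Dirac's lemma), pick $v \in \VER\ASC$ whose neighborhood $N(v)$ is a clique, so that $F := \{v\} \cup N(v)$ is a face of $\ASC$. By hypothesis, $\rank(X|_F) \le r$, so we can write $X|_F = G(w_i : i \in F)$ with each $w_i$ in a Hilbert space $H$ of dimension $r$. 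The induced subcomplex $\ASC' := \ASC \setminus v$ is again chordal (a simplicial vertex can always be removed while preserving chordality), and the restricted form has local rank at most $r$, so by induction $X|_{\ASC'}$ has a PSD completion of rank at most $r$, say $\hat{X}' = G(u_i : i \neq v)$ with $u_i$ in a Hilbert space $H'$ of dimension $r$.

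The crucial gluing step uses Theorem \ref{thm:gram_matrices_moduli}: the vector families $\{w_i : i \in N(v)\} \subset H$ and $\{u_i : i \in N(v)\} \subset H'$ both have Gram matrix equal to $X|_{N(v)}$, so there is an orthogonal map between the subspaces they span sending $w_i \mapsto u_i$ for each $i \in N(v)$. Since $\dim H = \dim H' = r$, this extends to an orthogonal isomorphism $\tilde L : H \to H'$. Setting $w_v' := \tilde L(w_v)$ and defining the Gram representation $G(u_i,\, w_v' : i \neq v)$ in $H'$ produces a PSD completion of $X$ of rank at most $r$. This shows $\rank(q) \le r = \locr(q)$, completing the proof.

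The main obstacle is the gluing step: one must verify both that the orthogonal identification on $N(v)$ extends to the full $r$-dimensional Hilbert space (ensured by the common dimension), and that after identification the new vector $w_v'$ still realizes the correct inner products $X_{v,i}$ for $i \in N(v)$, which follows because $\tilde L$ is an isometry and only pairs $(v,i)$ with $i \in N(v)$ correspond to specified entries of the partial matrix involving $v$. The simpliciality of $v$ is what makes this work: every specified edge at $v$ lies inside the single clique $F$, so the local Gram picture at $F$ captures all compatibility constraints that must match the inductive completion.
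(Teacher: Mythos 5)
Your proof is correct and follows essentially the same strategy as the paper's: the easy inequality $\locr(q)\le\rank(q)$ by pulling back a sum-of-squares representation to each face, and the hard inequality $\rank(q)\le\locr(q)$ by induction on the number of vertices using a simplicial vertex of the chordal complex, with the Gram-matrix moduli result (Theorem \ref{thm:gram_matrices_moduli}) providing the orthogonal identification that extends the inductive completion across the clique containing the simplicial vertex. The only difference is cosmetic (you transport $w_v$ into the inductive Hilbert space $H'$, while the paper maps the clique's vectors into $H$ and takes $v_1 = T(w_1)$), and you correctly flag the two points that make the gluing sound: matching dimensions to extend the partial orthogonal map, and simpliciality of $v$ ensuring every specified entry at $v$ already appears in the single face $F$.
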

\begin{proof}
    Let $\ASC = \chi(C)$, for a chordal graph $C$.
    Let $q \in \POS(\ASC)$.

    To see that $\rank q \ge \locr(q)$, notice that if $q = \ell_1^2 + \ell_2^2 +\dots+\ell_r^2$, and $\iota_F : 2^F \rightarrow \ASC$ is the inclusion of a face into $\ASC$, then $\iota_F^*(q) = \iota_F^*(\ell_1)^2 + \iota_F^*(\ell_2)^2 +\dots+\iota_F^*(\ell_r)^2$, so that the rank of $\iota_F^*(q)$ is at most $r$.

    To see that $\rank q \le \locr(q)$, we must show the following: given a partial matrix $X$ so that for each face $F \in \ASC$, $X|_F$ is PSD of rank at most $k$, there must be some PSD completion of $X$ with rank at most $k$.
    Equivalently, we need to show that there is some $k$-dimensional Hilbert space $H$, and some vectors $v_1, \dots, v_n$ so that for each $\{i,j\} \in E(C)$,
    \[X_{i,j} = \langle v_i, v_j \rangle\]

    To find these $v_i$, we will use induction, as well as a characterization of chordal graphs as being graphs with a \textbf{simplicial vertex}.
    That is, $C$ is a chordal graph if and only if either $C$ is a single vertex graph, or there is a vertex $x \in C$ so that the neighborhood of $x$ in $C$ induces a clique, and the subgraph obtained from removing $x$, $C - x$ is also chordal.

    Now, as a base case, if $C$ is a single vertex graph, then it is clear that the local rank of any positive $q$ is 1, and the rank of $q$ is also 1.

    On the other hand, if $C$ is not a single vertex graph, then suppose that the vertices of $C$ are enumerated $V(C) = \{1,\dots, n\}$, and suppose that the vertex $1$ is a simplicial vertex, whose neighborhood is $\{2,3\dots,\ell\}$.
    Consider $C'$, the graph obtained by deleting the first vertex of $C$.
    $C'$ is chordal, so by induction, there is a $k$-dimensional Hilbert space $H$ and vectors $v_2, \dots v_n$ so that for all $i,j \in E(C')$,
    \[
        X_{i,j} = \langle v_i, v_j \rangle.
    \]

    So, it remains to find $v_1$ so that for $j \in \{2,\dots, \ell\}$,
    \[
        X_{1,j} = \langle v_1, v_j \rangle.
    \]
    To do this, notice that the set of vertices $\{1, \dots, \ell\} \subseteq V(C)$ induces a clique of $C$, and hence a face of $\ASC$.
    $X|_{\{1, \dots, \ell\}}$ is PSD and rank at most $k$ by definition of the local rank, so there are vectors $w_1, \dots, w_{\ell} \in H'$ so that $H'$ is $k$-dimensional, and for $i,j \in [\ell]$,
    \[
        X_{i,j} = \langle w_i, w_j \rangle.
    \]
    Now, notice that for $i, j \in \{2, \dots, \ell\}$,
    \[
        \langle w_i, w_j \rangle = X_{i,j} = \langle v_i, v_j \rangle.
    \]
    Hence, there is an orthogonal transformation $T:\SPAN \{w_2, \dots, w_{\ell}\} \rightarrow H$ so that $T(w_i) = v_i$ for $i \in \{2, \dots, \ell\}$.
    If $\SPAN \{w_2, \dots, w_{\ell}\} \neq H'$, then we can extend the map $T$ to an orthogonal map from $H'$ to $H$, since these are both $k$-dimensional Hilbert spaces.

    So, let us take $v_1 = T(w_1)$, which gives us that for each $i \in \{1, \dots, \ell\}$,
    \[
        X_{1,j} = \langle w_1, w_j \rangle = \langle T(w_1), T(w_j) \rangle = \langle v_1, v_j\rangle.
    \]
    This gives us the desired Gram matrix representation for $X$, showing that $X$ has a rank-$k$ PSD completion.
\end{proof}

We now give a proof of lemma \ref{lem:local_rank_cover}.
\begin{proof} (of lemma \ref{lem:local_rank_cover})
    Let $\phi : \ASCb \rightarrow \ASC$ be a chordal cover, and let $q \in \POS(\ASC)$.
    We want to show that $\rank \phi^*(q) = \locr(q)$, which by the previous lemma is equivalent to saying that $\locr(\phi^*(q)) = \locr(q)$.
    
    To show that $\locr(\phi^*(q)) \ge \locr(q)$, let $F$ be a face of $\ASC$, with the inclusion of $\iota_F : 2^{F} \rightarrow \ASC$, so that $\locr(q) = \rank \iota_F^*(q)$.
    Because $\phi$ is surjective, there is some $T \in \ASCb$ so that $\phi(T) = F$, and by passing to a subset of $T$, we can assume that $\phi$ is a bijection between $T$ and $F$.
    This implies that $\iota_T^*(\phi^*(q))$ gets sent to $\iota_F^*(q)$ under the induced isomorphism of $\POS(2^T)$ and $\POS(2^F)$.
    In particular, $\rank \iota_T^*(\phi^*(q)) = \rank \iota_F^*(q)$, which implies that $\locr(\phi^*(q)) \ge \locr(q)$.

    On the other hand, to show that $\locr(\phi^*(q)) \le \locr(q)$, let $T \in \ASCb$, and let $F = \phi(T) \in \ASC$. We want to show that $\rank \iota_T^*(\phi^*(q)) = \rank \iota_F^*(q)$.
    To see that this is the case, let $\phi^*_F(q)$ be represented by the matrix $G(v_i : i \in \VER{\ASC})$, where $v_1, \dots, v_n \in H$ and $H$ has dimension $\rank \iota_F^*(q)$. 
    Then, $\iota_T^*(\phi^*(q))$ is represented by $G(v_{\phi(i)} : i \in \VER{\ASCb})$, which has rank at most $\rank \iota_F^*(q)$, since the $v_i$ still lie in a Hilbert space of dimension at most $k$.

    We see then that $\locr(\phi^*(q)) = \locr(q)$, as desired.
\end{proof}
\section{Small Chordal Quotients}
The next theorem, concerning complexes which are quotients of chordal complexes, can be considered an application of lemma \ref{lem:decompositions}.

\begin{thm}[Small Chordal Quotients]
    \label{thm:small_chordal_quotient}
    \[
        \elocr(\ASC) \le \rho(\ASC).
    \]
\end{thm}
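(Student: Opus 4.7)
The plan is to prove the stronger statement that any $q \in \POS(\ASC)$ with $\locr(q) > \rho(\ASC)$ fails to span an extreme ray, which immediately implies the theorem. If $\rho(\ASC) = 0$ then $\ASC$ is chordal and $\elocr(\ASC) = 0$ by convention, so I may assume $k := \rho(\ASC) \ge 1$. Fix a chordal quotient $\phi : \ASCb \to \ASC$ achieving $|\VER{\ASCb}| - |\VER{\ASC}| = k$. By Theorem \ref{thm:nonnegative_quads_chordal}, $\phi^*(q) \in \SOS(\ASCb)$, and by Lemma \ref{lem:local_rank_cover}, $\rank \phi^*(q) = \locr(q) > k$. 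So I can write $\phi^*(q) = G(v_i : i \in \VER{\ASCb})$ with the $v_i$ spanning a Hilbert space $H$ of dimension $\locr(q) > k$.

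The main construction is to set
\[ D := \SPAN\{v_i - v_j : i,j \in \VER{\ASCb},\ \phi(i) = \phi(j)\} \subseteq H. \]
A dimension count gives $\dim D \le k$: each equivalence class $\phi^{-1}(a) \subseteq \VER{\ASCb}$ of size $m_a$ contributes at most $m_a - 1$ independent differences among the $v_i$, and summing over $a \in \VER{\ASC}$ yields $\sum_a (m_a - 1) = |\VER{\ASCb}| - |\VER{\ASC}| = k$. Since $\dim H > k \ge \dim D$, the orthogonal complement $D^{\intercal}$ is nonzero, and I would pick any $1$-dimensional subspace $L \subseteq D^{\intercal}$.

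Next I would apply Lemma \ref{lem:decompositions} to the $(\text{id}, L)$-decomposition $\phi^*(q) = G(\pi_L(v_i)) + G(\pi_{L^{\intercal}}(v_i))$. The compatibility check is immediate: if $\phi(\{i,j\}) = \phi(\{r,s\})$ then, after relabeling, $\phi(i) = \phi(r)$ and $\phi(j) = \phi(s)$, so $v_i - v_r$ and $v_j - v_s$ lie in $D$ and are thus killed by $\pi_L$; hence $\pi_L(v_i) = \pi_L(v_r)$ and $\pi_L(v_j) = \pi_L(v_s)$, and the required inner products agree. Lemma \ref{lem:decompositions} then produces a decomposition $q = q_1 + q_2$ in $\POS(\ASC)$.

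The final task is to verify that $q_1$ and $q_2$ are nonzero and linearly independent, so that $q$ cannot be extreme. Both are nonzero because $L$ and $L^{\intercal}$ are nontrivial subspaces (using $\dim H > k \ge 1$) and the $v_i$ span $H$. For linear independence, note that $\rank \phi^*(q_1) = \rank G(\pi_L(v_i)) \le \dim L = 1$, so Lemma \ref{lem:local_rank_cover} gives $\locr(q_1) \le 1 < \locr(q)$; hence $q_1$ is not a positive scalar multiple of $q$, ruling out $q_1$ and $q_2$ being proportional. I do not anticipate any step being a serious obstacle; the entire argument is driven by the dimension bound $\dim D \le k$, which is precisely what guarantees that $D^{\intercal}$ contains a direction along which we can split $\phi^*(q)$ while respecting the fibers of $\phi$.
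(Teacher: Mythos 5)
Your proof is correct and follows essentially the same route as the paper: both fix a chordal quotient achieving $\rho(\ASC)$, write $\phi^*(q)$ as a Gram matrix of vectors spanning a Hilbert space of dimension $\locr(q)$, use a dimension count to find a one-dimensional $L$ along which to split via Lemma \ref{lem:decompositions}, and conclude non-extremality because the rank-$\le 1$ piece cannot be proportional to $q$. Your subspace $D = \SPAN\{v_i - v_j : \phi(i) = \phi(j)\}$ is just a packaging of the paper's homogeneous linear system (the $v_i - v_j$ are exactly its coefficient vectors), and the bound $\dim D \le \sum_a(|\phi^{-1}(a)|-1) = k$ is the same counting argument.
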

\begin{proof}
    The result is clear if $\ASC$ is chordal, since $\rho(\ASC) = 0$ in that case, and $\elocr(\ASC)$ is defined to be 0.

    Now, suppose that $\ASC$ is not chordal, and let $\phi : \ASCb \rightarrow \ASC$ be a chordal cover so that $|\VER{\ASCb}| - |\VER{\ASC}| = \rho(\ASC)$.

    Suppose for a contradiction that $q$ spans a non-sum-of-squares extreme ray of $\POS(\ASC)$, and
     \[
         \locr(q) > |\VER{\ASC}| - |\VER{\ASCb}|.
    \]

    Let $X$ be a partial matrix which represents $\inphi(q)$.
    Because $\ASCb$ is chordal, $X$ is PSD-completable.

    By lemma \ref{lem:local_rank_cover}, there is PSD completion of $X$, $\hat{X}$, so that the rank of $\bar{X}$ is exactly $\locr(q)$.

    Because $\hat{X}$ is PSD, $\hat{X}$ is a Gram matrix, so that $\bar{X} = G(v_i : i \in \VER{\ASCb})$, where each $v_i \in H$, for some Hilbert space $H$.
    Without loss of generality, we assume that $\{v_i\}$ spans $H$, so that by lemma \ref{lem:gram_matrices_rank}, $H$ is exactly $\locr(q)$ dimensional.

    We now wish to find a $(T,L)$-decomposition of $\hat{X}$, which is compatible with $\phi$ and the $v_i$ so that $T$ is the identity map on $H$, and $L$ is 1-dimensional.
    Equivalently, we wish to find some 1-dimensional subspace $L \subseteq H$ so that for every $\{i, j\}, \{k, \ell\} \in \skel 1 \ASCb$ with $\phi(\{i,j\}) = \phi(\{k, \ell\})$,
    \[
        \langle \pi_L(v_i), \pi_L(v_j) \rangle =
        \langle \pi_L(v_k), \pi_L(v_\ell) \rangle.
    \]

    Suppose now that $L$ is the one dimensional subspace spannned by the unit vector $\unit$.
   We then have that for any $v \in H$, $\pi_L(\unit) = \langle \unit, v\rangle \unit$.
    Applying this formula for $\pi_L$, the previous equation reduces to:
    \[
        \langle \unit, v_i\rangle \langle \unit, v_j \rangle =
        \langle \unit, v_k\rangle \langle \unit, v_{\ell} \rangle
    \]

    Note now that it suffices that for each $a, b$ so that $\phi(a) = \phi(b)$,
    \[
        \langle \unit, v_a\rangle =
        \langle \unit, v_b\rangle
    \]
    This is a system of homogeneous linear equations, so to show that the desired $\unit$ exists, we just need to show that these equations span a space of dimension at most $\locr(q) - 1$.

    For each $a \in \VER{\ASC}$, choose some $x_a \in \VER{\ASCb}$ so that $\phi(x_a) = a$.
    The equations are then equivalent to the statement that for each $i \in \phi^{-1}(a) - x_a$,
    \[
        \langle \unit, v_i \rangle = 
        \langle \unit, v_{x_a} \rangle
    \]

    For each $a \in \VER{\ASC}$, this is a system of $|\phi^{-1}(a)|-1$ linear equations.

    The total number of equations is $\sum_{a \in \VER{\ASC}} (|\phi^{-1}(a)| - 1) = |\VER{\ASCb}| - |\VER{\ASC}| < \locr(q)$.
    We see then that the number of equations is less than the dimension of $H$, so there is a nonzero solution $\unit$ to this system.
    
    We conclude that there are some $q_1$ and $q_2 \in \POS(\ASC)$, so that $q = q_1 + q_2$, and $\phi^*(q_1)$ is represented by $G(\pi_L(v_1), \dots, \pi_L(v_n)))$.
    Because $L$ is 1-dimensional, the local rank of $q_1$ must then be 1, or else $q_1 = 0$.

    If $q_1 = 0$, then $\langle \omega, v_i \rangle = 0$ for all $i \in [n]$, but this would imply that $\omega = 0$, since the $v_i$ span $H$, a contradiction.
    Therefore, $q_1$ has local rank 1, and is linearly independent from $q$, which has local rank strictly greater than 1.
    This shows that $q$ is not an extreme ray.
\end{proof}

We can obtain a slightly stronger result when $\phi$ `glues together' two odd cliques of a chordal complex.
\begin{thm} \label{thm:odd_clique}
    Let $\ASCb$ be a chordal complex.
    Let $S_1, S_2 \in \ASCb$ be disjoint faces, so that $|S_1| = |S_2| = k > 1$ and $k$ is odd.
    We will also suppose that there are no vertices with edges to both $S_1$ and $S_2$,
    Let $f : S_1 \rightarrow S_2$ be a bijection.
    Let $\phi : \ASCb \rightarrow \ASC$ be the quotient map obtained by identifying $x$ with $f(x)$ for each $x \in S_1$.
    We can conclude
    \[\elocr(\ASC) \le k - 1.\]
\end{thm}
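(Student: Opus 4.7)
The plan is to imitate the proof of Theorem \ref{thm:small_chordal_quotient} while extracting one extra dimension of slack from the odd-$k$ hypothesis---the key fact being that any orthogonal automorphism of an odd-dimensional real Hilbert space has a real eigenvalue $\pm 1$. Since $\phi$ is a chordal quotient with $|\VER \ASCb| - |\VER \ASC| = k$, Theorem \ref{thm:small_chordal_quotient} already yields $\elocr(\ASC) \le k$, so it suffices to rule out a non-sum-of-squares extreme ray $q \in \POS(\ASC)$ with $\locr(q) = k$ exactly.

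Assume for contradiction that such a $q$ exists. Let $X$ be the partial matrix representing $\phi^*(q)$ on $\ASCb$, and use Lemma \ref{lem:local_rank_cover} to write $\hat X = G(v_i : i \in \VER \ASCb)$ for a PSD completion of rank exactly $k$, with the $v_i$ spanning a Hilbert space $H$ of dimension $k$. The hypothesis that $S_1$ and $S_2$ are disjoint with no vertex adjacent to both implies that the only non-trivial edge identifications induced by $\phi$ in $\skel 1 \ASCb$ are $\{i,j\} \sim \{f(i), f(j)\}$ for $i, j \in S_1$. Thus Lemma \ref{lem:image_of_inphi} forces the Gram matrix identity $G(v_i : i \in S_1) = G(v_{f(i)} : i \in S_1)$, and Theorem \ref{thm:gram_matrices_moduli} produces an orthogonal isomorphism $T : \SPAN(v_i : i \in S_1) \to \SPAN(v_{f(i)} : i \in S_1)$ with $T(v_i) = v_{f(i)}$. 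Since these two subspaces of $H$ have equal dimension, $T$ extends to an orthogonal automorphism $\tilde T$ of $H$.

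Now the oddness of $k$ enters: $\tilde T$ acts on the $k$-dimensional real Hilbert space $H$, so it has a real eigenvalue $\epsilon \in \{+1, -1\}$ with a unit eigenvector $\omega$. Setting $L = \SPAN(\omega)$ and forming the $(I, L)$-decomposition of $\hat X$, compatibility of this decomposition with $\phi$ reduces (on the only non-trivial identifications) to
\[
\langle \omega, v_i\rangle \langle \omega, v_j\rangle = \langle \omega, v_{f(i)}\rangle \langle \omega, v_{f(j)}\rangle \quad \text{for all } i, j \in S_1,
\]
which holds because $\langle \omega, v_{f(i)}\rangle = \langle \omega, \tilde T v_i\rangle = \langle \tilde T^{-1}\omega, v_i\rangle = \epsilon \langle \omega, v_i\rangle$, making both sides equal to $\epsilon^2 \langle \omega, v_i\rangle \langle \omega, v_j\rangle$. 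Lemma \ref{lem:decompositions} then induces a decomposition $q = q_1 + q_2$ in $\POS(\ASC)$ with $\locr(q_1) \le 1$, and $q_1 \ne 0$ because the $v_i$ span $H$ and $\omega \ne 0$. Since $\locr(q) = k > 1$, $q_1$ is linearly independent from $q$, contradicting extremity.

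The main obstacle I expect is the bookkeeping step identifying exactly which pairs of edges in $\skel 1 \ASCb$ get collapsed by $\phi$, since this is where the ``no vertex adjacent to both $S_1$ and $S_2$'' hypothesis (together with its natural reading that there are no cross-edges between $S_1$ and $S_2$) is used. Once the identifications are pinned to $\{i,j\} \sim \{f(i), f(j)\}$ for $i,j \in S_1$, the Gram-matrix moduli viewpoint of Subsection \ref{subsec:psd_matrices_moduli} combined with the odd-dimension eigenvalue trick cleanly closes the argument.
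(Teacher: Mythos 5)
Your proposal is correct and follows essentially the same path as the paper's proof: invoke Theorem \ref{thm:small_chordal_quotient} to reduce to ruling out local rank exactly $k$, lift to a rank-$k$ Gram completion, use the Gram-matrix equality on $S_1$ and $S_2$ plus Theorem \ref{thm:gram_matrices_moduli} to produce an orthogonal automorphism $T$ of $H$, exploit odd-dimensionality of $H$ to find a real unit eigenvector $\omega$ with eigenvalue $\pm 1$, and check $(I,\SPAN\omega)$-compatibility to split $q$. The only cosmetic difference is that you extend $T$ from $\SPAN(v_i : i \in S_1)$ to all of $H$ explicitly and work with an eigenvector of $\tilde T$ rather than $T^\intercal$, which amounts to the same thing since $T^\intercal = T^{-1}$ for orthogonal maps and the eigenvalue is $\pm 1$.
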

\begin{proof}
    Using theorem \ref{thm:small_chordal_quotient}, we see that every extreme ray of $\POS(\ASC)$ has local rank at most $k$. 
    Let $q$ be such an extreme ray, and suppose that $q$ has local rank exactly $k$.

    Let $G(v_i : i \in \VER{\ASCb})$ be a PSD completion of a partial matrix representing $\inphi(q)$, where the $v_i$ span $H$, a $k$-dimensional Hilbert space.

    We have from the definition of $\inphi$ that for all $i. j \in S_1$,
    \[
        \langle v_i, v_j \rangle = 
        \langle v_{f(i)}, v_{f(j)} \rangle
    \]
    This implies that $G(v_i : i \in S_1) = G(v_j : j \in S_2)$, so lemma \ref{thm:gram_matrices_moduli} implies that there is an orthogonal map $T : H \rightarrow H$ so that $T v_i = v_{f(i)}$ for all $i$.

    Because $H$ is an odd dimensional vector space, the linear map $T^{\intercal}$ has a real eigenvector, $\unit$, so that $\|\unit\| = 1$.
    Because $T$ is orthogonal, this eigenvector has eigenvalue $\lambda \in \{-1, 1\}$.

    We wish to argue that $L = \SPAN \unit$ satisfies the $(T,L)$ compatibility conditions for $T$ being the identity on $H$ to apply lemma \ref{lem:decompositions}.

    Because of our assumption that there are no vertices with edges to both $S_1$ and $S_2$, it suffices to check the following compatibility condition for $i, j \in S_1$,
    \begin{align*} 
        \langle \unit, v_i\rangle \langle \unit, v_j \rangle &= \langle \unit, v_{f(i)}\rangle \langle \unit, v_{f(j)} \rangle\\
             &= \langle \unit, Tv_i\rangle \langle \unit, Tv_{j} \rangle\\
             &= \langle T^{\intercal}\unit, v_i\rangle \langle T^{\intercal}\unit, v_{j} \rangle\\
             &= \lambda^2\langle \unit, v_i\rangle \langle \unit, v_{j} \rangle\\
             &= \langle \unit, v_i\rangle \langle \unit, v_{j} \rangle.
    \end{align*}

    Furthermore, because we assume that $k > 1$ and $G(\pi_L(v_i) : i \in \VER{\ASCb})$ has rank 1,  this $(T,L)$-decomposition produces linearly independent summands.

    Thus, $q$ is not an extreme ray, and our result is shown.
\end{proof}

\section{Thickened Graphs}

The class of complexes with extreme local rank 1 is of natural interest, since these are the nonchordal complexes where $\POS(\ASC)$ has the simplest extreme rays.
We can use theorem \ref{thm:small_chordal_quotient} to see that if $\ASC$ can be obtained from a chordal complex $\ASCb$ by identifying two vertices, then $\ASC$ will have extreme local rank 1.
In fact, there is a fairly rich class of complexes with extreme local rank 1, which can be obtained by `gluing together' chordal compexes in a controlled way.

To define our class of complexes, we start with a definition of a directed graph (possibly with loops and multi-edges), which is also sometimes called a quiver.
\begin{definition}[Directed Graph]\label{def:directed_graph}
    A directed graph $D$ consists of 2 sets, $V(D)$ and $E(D)$, and two functions $h : E(D) \rightarrow V(D)$, and $t : E(D) \rightarrow V(D)$.

    We think of $E(D)$ as being the set of edges of $D$, and $h$ as the map sending an edge to its head, and $t$ as the map sending an edge to its tail.
    Notice that this definition implicitly allows for multiple edges and loops.
\end{definition}
\begin{definition}[Thickened Graphs]\label{def:thick_graphs}
We now replace the edges of that graph with some arbitrary chordal complex with two distinguished vertices.
This construction takes in three inputs:
    \begin{enumerate}
        \item A directed graph $D$.
        \item For each edge in $E(D)$, a chordal complex $\ASC_e$.
        \item For each $\ASC_e$, two distinguished vertices, $H(e) \in \VER{\ASC}$ and $T(e) \in \VER{\ASC}$, so that $\{H(e), T(e)\}$ is not a face of $\ASC_e$.
    \end{enumerate}
    Given this data, we consider first the disjoint union (coproduct) of all of the $\ASC_e$ with the zero dimensional complex obtained by considering only the vertices of $V(D)$.
    \[
        \ASCb = \left(\bigsqcup_{e\in E(D)} \ASC_e\right) \bigsqcup V(D).
    \]

    Then, we define the \textbf{thickening} of $D$ by this data, $D / \{\ASC_e\}$ to be the quotient of $\ASCb$ obtained by identifying $h(e)$ with $H(e)$ and $t(e)$ with $T(e)$ for each $e \in E(D)$.
    \[
        \ASC = \ASCb / \{h(e) \sim H(e), t(e) \sim T(e)\}
    \]
    A \textbf{thickened graph} is any complex which can be obtained by thickening a graph by a family of chordal complex.
\end{definition}
Note that $\ASCb$ in the above definition is chordal, and the quotient map sending $\ASCb$ to $\ASC$ is a chordal cover.

Note in particular that any pure 1-dimensional complex can be taken as a thickened graph where the underlying directed graph is simple, and the chordal complexes are all single edge complexes.
\begin{thm}\label{thm:thickened_graphs_rank_1}
    Any thickened graph has extreme locally rank 1.
\end{thm}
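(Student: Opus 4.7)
The plan is to apply Lemma \ref{lem:decompositions} to the canonical chordal cover $\phi\colon\ASCb\to\ASC$, where $\ASCb=\bigsqcup_{e\in E(D)}\ASC_e\sqcup V(D)$ and $\phi$ identifies $H(e)\sim h(e)$ and $T(e)\sim t(e)$. Suppose for contradiction that $q\in\POS(\ASC)\setminus\SOS(\ASC)$ spans an extreme ray with $\locr(q)\ge 2$; I will exhibit a nontrivial decomposition $q=q_1+q_2$ in $\POS(\ASC)$. A routine reduction lets me assume $q(e_v)>0$ for every $v\in\VER{\ASC}$, since otherwise $q$ lies in a face of $\POS(\ASC)$ isomorphic to $\POS$ of a smaller thickened graph, to which induction applies.

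The key combinatorial observation is that, because $\{H(e),T(e)\}\notin\ASC_e$ for every $e$ and the pieces $\ASC_e$ are pairwise disjoint in $\ASCb$, no two distinct edges of $\ASCb$ are identified by $\phi$: the only identifications are vertex identifications within each class $\phi^{-1}(v)$ for $v\in V(D)$. Consequently, for any $(T,L)$-decomposition the compatibility condition of Lemma \ref{lem:decompositions} reduces to requiring that $\|\pi_L v_i\|$ be constant on each such class.

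Because $\ASCb$ is a disjoint union, the partial matrix of $\phi^*(q)$ has no specified cross-block entries and admits a block-diagonal PSD completion. I pick minimum-rank PSD completions $\hat X_e=G(u_i^{(e)}:i\in\VER{\ASC_e})$ with $u_i^{(e)}\in H_e$ of dimension $r_e$, take $H_V$ with orthonormal basis $\{f_v\}_{v\in V(D)}$, and set $v_v=\sqrt{q(e_v)}\,f_v$. Then $\hat X=G(v_i:i\in\VER{\ASCb})$ inside $H=H_V\oplus\bigoplus_e H_e$ is a PSD completion of the partial matrix of $\phi^*(q)$. Setting $L=H_V\oplus\bigoplus_e\SPAN\{u_{H(e)}^{(e)},u_{T(e)}^{(e)}\}$, every $v_i$ with $\phi(i)\in V(D)$ lies in $L$ and has norm $\sqrt{q(e_{\phi(i)})}$ depending only on $\phi(i)$, so the $(\mathrm{id},L)$-compatibility with $\phi$ holds and Lemma \ref{lem:decompositions} yields the desired decomposition $q=q_1+q_2$.

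The summand $q_2$ is represented by $G(\pi_{L^\perp}v_i)$ and vanishes at every $V(D)$-vertex, while $q$ does not, so $q_1$ and $q_2$ are linearly independent as soon as $q_2\ne 0$. In turn $q_2\ne 0$ whenever some non-distinguished $u_i^{(e)}$ escapes $\SPAN\{u_{H(e)}^{(e)},u_{T(e)}^{(e)}\}$, which is automatic when some $r_e\ge 3$ or when some rank-$2$ block has linearly dependent distinguished vectors. The main obstacle is the residual case in which $r_e\le 2$ for every $e$ and the distinguished vectors span $H_e$ whenever $r_e=2$; there the above $L$ collapses to $H$ and the decomposition is trivial. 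To dispose of it I plan to parametrise the one-parameter family of rank-$2$ decompositions $\hat X_e=\alpha_e^2+\beta_e^2$ by a unit vector $\omega_e\in H_e$ and, using the freedom of orthogonal rotations within each block together with a sign-matching argument at each $v\in V(D)$ in the spirit of Theorem \ref{thm:odd_clique}, find a consistent system $(\omega_e)_{e\in E(D)}$ producing two linearly independent, locally rank-$1$ summands of $q$ that descend to $\ASC$.
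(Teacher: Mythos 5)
The approach you take is genuinely different from the paper's, and it has a real gap in the residual case you yourself identify, which your sketch does not actually close.

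The paper's proof avoids the chordal cover $\phi\colon\ASCb\to\ASC$ altogether. It argues locally: choose one block $\ASC_e$ on which $\phi_e^*(q)$ has rank $\ge 2$, invoke Lemma~\ref{lem:decomp2ver} to write $\phi_e^*(q)=Y_e+Z_e$ with $Y_e$ rank one and, crucially, with the \emph{same} scalar $\alpha$ satisfying $(Y_e)_{H(e)H(e)}=\alpha X_{H(e)H(e)}$ and $(Y_e)_{T(e)T(e)}=\alpha X_{T(e)T(e)}$. That equal-scaling invariant is what lets the decomposition of the single block propagate: define $Y$ to agree with $Y_e$ on $\ASC_e$ and to equal $\alpha X$ everywhere else, and positivity on every other block is automatic because it is just a scalar multiple of $X$. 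No global subspace $L$ is required, and the problem of reconciling scaling factors across different blocks never arises. Your approach, by contrast, insists on a single global $(T,L)$-decomposition, and this is exactly where it breaks down.

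In your residual case ($r_e\le 2$ throughout, with $u^{(e)}_{H(e)},u^{(e)}_{T(e)}$ spanning $H_e$ whenever $r_e=2$), the bisector construction from Lemma~\ref{lem:decomp2ver} gives, for each such $e$, only two admissible rays $L_e$ (spanned by the sum or difference of the normalized distinguished vectors), and the corresponding scaling $\alpha_e=|\cos(\theta_e/2)|$ or $|\sin(\theta_e/2)|$ is rigidly determined by the angle $\theta_e$ between $u^{(e)}_{H(e)}$ and $u^{(e)}_{T(e)}$. The compatibility condition of Lemma~\ref{lem:decompositions} then demands that all edges incident to a fixed $w\in V(D)$ produce the same $\alpha_e$, and that this common value also equals $\|\pi_{L_V}(f_w)\|$. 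There is no reason for these angle-determined quantities to coincide, and your ``sign-matching in the spirit of Theorem~\ref{thm:odd_clique}'' remark does not address this mismatch of magnitudes, only of signs. This is not a technicality that can be waved away: it is precisely the case the paper's local argument is engineered to handle, and it does occur (it is the generic case once $\locr(q)=2$).

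A smaller point: your claim that $\phi$ identifies no two distinct edges of $\ASCb$ is false whenever $D$ has a loop $e$ (which the definition explicitly permits and the paper's figure depicts): if $h(e)=t(e)$ and $b$ is a vertex of $\ASC_e$ adjacent to both $H(e)$ and $T(e)$, then the edges $\{H(e),b\}$ and $\{T(e),b\}$ are identified. For your initial choice of $L$ (containing both distinguished vectors) the edge-compatibility equations happen to be automatic, so this does not sink the first part of your argument, but it does add further constraints in the residual case that you have not accounted for.
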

\begin{proof}
    We will let $\ASC = D / \{\ASC_e\}$, and we will let $q \in \POS(\ASC)$ be extreme, with local rank greater than 1.

    While it is possible to argue in a similar way to the proof of theorem \ref{thm:small_chordal_quotient}, using the chordal cover $\phi : \ASCb \rightarrow \ASC$, it will be convenient to argue in a more coordinate-centered way here.

    For each $e \in E(D)$, let $\phi_e : \ASC_e \rightarrow \ASC$ be the natural inclusion.
    Each face of $\ASC$ is contained in some $\ASC_e$, so if for all $e \in E(D)$, $\phi_e^*(q)$ is locally rank 1, then in fact, $q$ must be locally rank 1.

    So, let $e \in E(D)$ be some edge so that $\phi^*_e(q)$ has local rank at least 2.
    We will first decompose $\phi^*_e(q)$ into a sum, and then extend this decomposition to a decomposition of $q$.
    Our decomposition for $\phi^*_e(q)$ comes from the next lemma (whose proof we defer momentarily).
    \begin{lemma}\label{lem:decomp2ver}
        Suppose that $X$ represents a quadratic in $\POS(\ASC_e)$, then there is some decomposition $X = Y+Z$ so that $Y, Z$ represent quadratic forms in $\POS(\ASC_e)$, $Y$ has rank 1, and for some $0 \le \alpha \le 1$, $Y_{T(e)T(e)} = \alpha X_{T(e)T(e)}$ and $Y_{H(e)H(e)} = \alpha X_{H(e)H(e)}$.
    \end{lemma}

    Let $X_e$ represent $\phi^*_e(q)$, and let $X_e = Y_e+Z_e$ denote the decomposition and $\alpha$ the constant given by lemma \ref{lem:decomp2ver}.

    Now, let $X$ be a partial matrix representing $q$.
    Let $Y$ be a partial matrix so that for  $\{i,j\} \in \ASC_e$, $Y_{i,j} = (Y_e)_{i,j}$, and for $\{i,j\} \not \in \ASC_e$, $Y_{i,j} = \alpha X_{i,j}$.
    Similarly, let $Z$ be a partial matrix so that for  $\{i,j\} \in \ASC_e$, $Z_{i,j} = (Z_e)_{i,j}$, and for $\{i,j\} \not \in \ASC_e$, $Z_{i,j} = (1-\alpha) X_{i,j}$.

    We see that $X = Y + Z$, since $X_{i,j} = Y_{i,j} + Z_{i,j}$ for each $i, j \in \VER{\ASC}$.

    We claim that $Y$ and $Z$ represent forms in $\POS(\ASC)$.
    To see this, it suffices to check that for each face $F \in \ASC$, the submatrix of $Y$ indexed by $F$, $Y|_F$ is PSD.

    If $F$ is contained in $\ASC_e$, then $Y|_F$ is PSD because it equals $Y_e|_F$, which is PSD, since $Y_e$ is PSD completable.
    On the other hand, if $F$ is not contained in $\ASC_e$, then $Y|_F = \alpha X|_F$, which is PSD since $\alpha \ge 0$ and $X|_F$ is PSD.
    Thus, $Y$ represents a form in $\POS(\ASC)$.

    By an identical argument $Z$ represents a form in $\POS(\ASC)$.

    Letting $q_1$ and $q_2$ denote the forms represented by $Y$ and $Z$ respectively, we see that $\phi^*_e(q_1)$ is rank 1, whereas $\phi^*_e(q)$ has rank greater than 1.

    $q = q_1 + q_2$, with $q_1, q_2 \in \POS(\ASC)$, but $q_1$ is not in the span of $q$, so this implies that $q$ does not span an extreme ray.

\end{proof}

We now prove lemma \ref{lem:decomp2ver}.
\begin{proof} (of lemma \ref{lem:decomp2ver})
    Because $\ASC_e$ is chordal, $X$ is PSD completable, we have that there is a PSD completion of $X$, $G(v_i : i \in \VER{{\ASC_e}})$, where the $v_i$ are in some Hilbert space $H$.

    If either $v_{T(e)}$ or $v_{H(e)}$ is 0, then choose $\unit$ to be any unit vector in the span of the $v_i$.
    If neither are 0, then consider $\frac{v_{T(e)}}{\|v_{T(e)}\|} + \frac{v_{H(e)}}{\|v_{H(e)}\|}$.
    If this is 0, then $v_{T(e)}$ and $v_{H(e)}$ are linearly dependent, and once again, we can simply choose $\unit$ to be any unit vector in the span of the $v_i$.

    Finally, if $\frac{v_{T(e)}}{\|v_{T(e)}\|} + \frac{v_{H(e)}}{\|v_{H(e)}\|}$ is nonzero, then let $\unit$ be a unit vector in the span of this vector.

    In any case, it is clear that for some $0 \le \alpha \le 1$,
    \[
        |\langle \unit, v_{T(e)} \rangle| =  \alpha \|v_{T(e)}\|,
    \]
    and
    \[ 
        |\langle \unit, v_{H(e)} \rangle| =  \alpha \|v_{H(e)}\|.
    \]
    
    Thus, if we let $T$ be the identity on $H$, and let $L = \SPAN \omega$, this gives us a $(T,L)$ decomposition of $X$ with the desired property.
\end{proof}
\section{Cones}
We recall that a cone vertex of a simplicial complex $\ASC$ is a vertex $v \in \VER{\ASC}$ so that $v$ is contained in all facets of $\ASC$.
Given a complex $\ASC$, the cone over $\ASC$ is defined as 
\[
    \hat{\ASC} = \{S \subseteq \groundSet \cup \{*\} : S \in \ASC \text{ or }S - * \in \ASC\}.
\]

\begin{thm}\label{thm:extremeRayCone}
    Every extreme ray of $q \in \POS(\hat{\ASC})$ is either sum-of-squares, or it satisfies $q(e_*) = 0$.
\end{thm}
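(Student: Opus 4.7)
The plan is to peel off a rank one sum of squares piece from $q$ using the Schur complement with respect to the cone vertex $*$, and then use extremality to conclude that this piece is all of $q$.

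Concretely, suppose $q \in \POS(\hat{\ASC})$ spans an extreme ray and let $X$ be the partial matrix representing $q$. Assume $q(e_*) = X_{*,*} > 0$. Because $*$ is a cone vertex, the pair $\{i,*\}$ is a face of $\hat{\ASC}$ for every $i \in \VER{\ASC}$, so the entry $X_{i,*}$ is specified, and I can form the linear form
\[
    \ell = X_{*,*}\, x_* + \sum_{i \in \VER{\ASC}} X_{i,*}\, x_i,
\]
together with the candidate rank one form $q_1 := \frac{1}{X_{*,*}} \ell^2$. The partial matrix $Y$ representing $q_1$ has $Y_{*,*} = X_{*,*}$, $Y_{i,*} = X_{i,*}$, and $Y_{i,j} = \frac{X_{i,*} X_{j,*}}{X_{*,*}}$ for $\{i,j\} \in \skel 1 \ASC$. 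I would check these expressions by squaring $\ell$ and restricting to the specified entries of $\hat{\ASC}$.

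Next I would set $Z = X - Y$ and verify that $q_2 := q - q_1 \in \POS(\hat{\ASC})$. By construction, $Z$ vanishes on the $*$ row and column. Every face $F \in \hat{\ASC}$ either contains $*$, in which case $F = G \cup \{*\}$ with $G \in \ASC$ and $Z|_F$ is block diagonal with the block $Z|_G = X|_G - \frac{1}{X_{*,*}} X_{G,*} X_{*,G}$; or it does not contain $*$, in which case $F \in \ASC$ so $F \cup \{*\} \in \hat{\ASC}$ and $Z|_F$ is a principal submatrix of $Z|_{F \cup \{*\}}$. In either case PSDness of $Z|_F$ reduces to the Schur complement identity
\[
    X|_{G \cup \{*\}} = \frac{1}{X_{*,*}}\begin{pmatrix} X_{*,*} \\ X_{G,*} \end{pmatrix}\bigl(X_{*,*}\; X_{*,G}\bigr) + \begin{pmatrix} 0 & 0 \\ 0 & Z|_G \end{pmatrix},
\]
applied to the PSD matrix $X|_{G \cup \{*\}}$ with the strictly positive pivot $X_{*,*}$; this forces $Z|_G \succeq 0$. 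Thus $q_2 \in \POS(\hat{\ASC})$.

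Finally, $q = q_1 + q_2$ with $q_1, q_2 \in \POS(\hat{\ASC})$. By extremality of $q$, the summands must be scalar multiples of $q$; writing $q_1 = \alpha q$ and evaluating at $e_*$ gives $\alpha\, q(e_*) = q_1(e_*) = X_{*,*} = q(e_*)$, so $\alpha = 1$ and $q = q_1 = \frac{1}{X_{*,*}} \ell^2 \in \SOS(\hat{\ASC})$. The main obstacle is the verification that $Z$ represents a nonnegative form, but this is essentially the classical Schur complement argument; the cone vertex property is what makes the argument global, since it guarantees that every face of $\hat{\ASC}$ is contained in one that already involves $*$.
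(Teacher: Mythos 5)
Your proof is correct and follows essentially the same route as the paper: assume $q(e_*)>0$, peel off the rank-one form given by the Schur-complement decomposition at $*$ (you write it as $\frac{1}{X_{*,*}}\ell^2$, the paper writes its partial matrix directly), show the remainder stays in $\POS(\hat{\ASC})$ via the Schur complement and the cone-vertex property, and then invoke extremality.
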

\begin{proof}
    Let $q$ be an extreme ray of $\POS(\hat{\ASC})$, and let $X$ be a partial matrix representing $q$.

    If $X_{*,*} = 0$, then $q(e_*) = X_{*,*} = 0$, and we are done.
    So, assume that $X_{*,*} > 0$.

    Define $Z$ so that for $\{i,j\} \in \hat{\ASC}$.
    \[Z_{i,j} = X_{*,*}^{-1}X_{i,*} X_{*,j}.\]
    Notice that if we let $Z$ has a completion, $\hat{Z}$, where for all $i,j \in \VER{\hat{\ASC}}$,
    \[\hat{Z}_{i,j} = X_{*,*}^{-1}X_{i,*} X_{*,j}.\]
    Then $\hat{Z} = X_{*,*}^{-1}vv^{\intercal}$, where $v$ is the vector so that $v_i = X_{i,*}$.
    This is a rank 1 PSD matrix, so $Z$ represents a sum-of-squares form on $\POS(\hat{\ASC})$.

    We then define $Y = X - Z$.
    Specifically, $Y$ is a partial matrix so that for $\{i,j\} \in \skel 1 {\hat{\ASC}}$,
    \[Y_{i,j} = X_{i,j} - X_{*,*}^{-1}X_{i,*} X_{*,i}.\]
    It is not hard to see from this definition that $Y_{i,*} = 0$ for all $i \in \VER{\hat{\ASC}}$.

    We claim that $Y$ represents a quadratic in $\POS(\hat{\ASC})$.
    To check this, we need to check that $Y|_F$ is PSD for all faces of $\hat{\ASC}$.
    Firstly, if $F \in \ASC$, then $T = F \cup \{*\} \in \hat{\ASC}$, and we can consider the submatrix $X|_{T}$, which is PSD.
    The Schur complement of $X|_{T}$ with respect to the $*$ entry is then defined coordinate-wise as
    \[(X|_T / (*,*))_{i,j} = X_{i,j} -  X_{*,*}^{-1}X_{i,*} X_{*,j} = (Y|_F)_{i,j}.\]
    By lemma \ref{lem:schur_proj}, $(X|_T / (*,*))$ is PSD, and hence, $Y|_F$ is PSD.
    On the other hand, if $F \not \in \ASC$, then $F = T \cup \{*\}$ for some $T \in \ASC$, and we see then that $Y|_F$ is simply $Y|_T$ with an appended row and column of zeros.
    Since we have established that $Y|_T$ is PSD, this implies that $Y|_F$ is PSD.

    Notice that $Y + Z = X$. $X$ represents an extreme ray of $\POS(\ASC)$ which implies that $Y$ and $Z$ are linearly dependent, but $Y_{*,*} = 0 \neq Z_{*,*}$, so we must have that $Y$ is identically 0.
    Therefore, $X = Z$, and $X$ is rank 1 sum-of-squares.
\end{proof}
\section{Locally Rank 1 Extreme Rays}
In this section, we will consider complexes with extreme local rank 1, and classify the locally rank 1 extreme rays of $\POS(\ASC)$ for arbitrary complexes $\ASC$.

\subsection{Complexes of Extreme Local Rank 1}
Define
\[
    \EXSimp_1 = \{\ASC \in \simp : \elocr(\ASC) \le 1\}.
\]

From \ref{thm:thickened_graphs_rank_1}, we have that all thickened graphs are locally rank 1, and from \ref{thm:extremeRayCone}, the cone over a simplicial complex with extreme locally rank 1 has extreme local rank 1.
We will show one more operation preserves the class of locally rank 1 complexes.

Given two complexes, $\ASC$ and $\ASCb$ on disjoint vertex sets, and vertices $a \in \ASC$ and $b \in \ASCb$, then the 1-sum of $\ASC$ and $\ASCb$ is the complex obtained by identifying $a$ and $b$ in the disjoint union $\ASC \sqcup \ASCb$.
Denote this 1-sum by $\ASC \oplus \ASCb$ (where we intentionally suppress the choice of vertices in $\ASC$ and $\ASCb$).

\begin{thm}\label{thm:1sum}
    \[\elocr(\ASC \oplus \ASCb) = \max(\elocr(\ASC), \elocr(\ASCb))\]
\end{thm}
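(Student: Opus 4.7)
The plan is to identify $\POS(\ASC \oplus \ASCb)$ with a fiber product of $\POS(\ASC)$ and $\POS(\ASCb)$ over the value at the shared vertex, and then analyze extreme rays componentwise. Letting $\iota_\ASC : \ASC \to \ASC \oplus \ASCb$ and $\iota_{\ASCb} : \ASCb \to \ASC \oplus \ASCb$ be the inclusions and $a = b$ the glued vertex, the key observation is that every face of $\ASC \oplus \ASCb$ lies entirely in either $\ASC$ or $\ASCb$, so $(\iota_\ASC^*, \iota_{\ASCb}^*)$ identifies $\POS(\ASC \oplus \ASCb)$ with the cone of compatible pairs
\[
\{(q_\ASC, q_{\ASCb}) \in \POS(\ASC) \times \POS(\ASCb) : q_\ASC(e_a) = q_{\ASCb}(e_b)\},
\]
and $\locr(q) = \max(\locr(\iota_\ASC^* q), \locr(\iota_{\ASCb}^* q))$.

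For the lower bound, given an extreme non-SOS $q_\ASC \in \POS(\ASC)$ of local rank $k = \elocr(\ASC)$, I would set $t := q_\ASC(e_a)$ and define $q$ as the pair $(q_\ASC,\ t \cdot x_b^2)$. It has local rank $\max(k,1) = k$ and is non-SOS because $\iota_\ASC^*$ preserves SOS. Extremeness of $q$ follows from the extremeness of $q_\ASC$, the easy extremeness of $t \cdot x_b^2$ in $\POS(\ASCb)$ (a one-line calculation from the $2\times 2$ PSD constraint at $b$), and the consistency condition at the shared vertex pinning down the scalar multipliers in any decomposition; the $t=0$ case is simpler. A symmetric construction yields $\elocr(\ASC \oplus \ASCb) \ge \elocr(\ASCb)$.

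For the upper bound, given an extreme non-SOS $q \in \POS(\ASC \oplus \ASCb)$ with restrictions $q_\ASC, q_{\ASCb}$ and $t := q_\ASC(e_a) = q_{\ASCb}(e_b)$, I would prove a short lifting lemma: any decomposition $q_\ASC = q_\ASC^{(1)} + q_\ASC^{(2)}$ in $\POS(\ASC)$ lifts to a decomposition of $q$ with linearly independent summands by setting $q_{\ASCb}^{(i)} = (q_\ASC^{(i)}(e_a)/t) \cdot q_{\ASCb}$ when $t > 0$, and placing $q_{\ASCb}$ entirely on one summand when $t = 0$. Extremeness of $q$ therefore forces each of $q_\ASC$ and $q_{\ASCb}$ to be zero or extreme in its own cone. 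A short case analysis on whether each restriction is zero, SOS extreme, or non-SOS extreme then gives $\locr(q) \le \max(\elocr(\ASC), \elocr(\ASCb))$, using that extreme SOS rays are squares of linear forms (hence have local rank $1$) and that non-chordality of $\ASC \oplus \ASCb$ implies non-chordality of at least one side.

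The main obstacle I anticipate is the subcase in which both $q_\ASC$ and $q_{\ASCb}$ are extreme SOS; here each is a single square $\ell_\ASC^2, \ell_{\ASCb}^2$ with $\ell_\ASC(e_a)^2 = \ell_{\ASCb}(e_b)^2 = t$, and I must show that $q$ itself is then SOS (contradicting the non-SOS hypothesis and ruling this subcase out). This reduces to choosing signs so that $\ell_\ASC(e_a) = \ell_{\ASCb}(e_b)$ and gluing the two linear forms into a single linear form on $\ASC \oplus \ASCb$, which is well-defined precisely because every face of $\ASC \oplus \ASCb$ sits inside one of the two pieces.
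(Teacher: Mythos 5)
Your proposal is correct and follows essentially the same strategy as the paper: identify $\POS(\ASC \oplus \ASCb)$ with the fiber product of $\POS(\ASC)$ and $\POS(\ASCb)$ over the value at the glued vertex, lift any decomposition on one side to a decomposition of the glued form via scalar multiples of the other side, and conclude that each restriction of an extreme ray must itself be extreme. Where you go further than the paper is in two edge cases the paper's proof glosses over: it divides by $q(e_*)$ without treating $q(e_*)=0$, and it applies the bound $\locr(\phi_i^*(q)) \le \elocr$ to a restriction that could be an SOS-extreme ray (a square of a linear form), where $\elocr$ is by definition only a bound for non-SOS extreme rays; your observation that two SOS-extreme restrictions glue (after a sign choice) into a single square on $\ASC \oplus \ASCb$, contradicting non-SOS-ness, is exactly what is needed to close that case. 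You also supply the lower-bound direction $\elocr(\ASC \oplus \ASCb) \ge \max(\elocr(\ASC), \elocr(\ASCb))$ explicitly via the pair $(q_{\ASC},\, t\,x_b^2)$, which is implicit but unstated in the paper (it amounts to the observation that $\{q : q(e_i)=0 \text{ for } i \in \VER{\ASCb}\setminus\{*\}\}$ is a face isomorphic to $\POS(\ASC)$).
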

\begin{proof}
    Let $\phi_1 : \ASC \rightarrow \ASC \oplus \ASCb$ and $\phi_2 : \ASCb \rightarrow \ASC \oplus \ASCb$ denote the natural inclusion maps.
    Let $*$ denote vertex which was identified in $\ASC$ and $\ASCb$ to make $\ASC \oplus \ASCb$.
    
    Because every face of $\ASC \oplus \ASCb$ is either a face of $\ASC$ or $\ASCb$, if $q \in \CoorR_2(\ASC\oplus \ASCb)$, then $q \in \POS(\ASC)$ if and only if $\phi_1^*(q) \in \POS(\ASC)$ and $\phi_2^*(q) \in \POS(\ASC)$.
    By similar reasoning, $\locr(q) = \max(\locr(\phi_1^*(q)), \locr(\phi_2^*(q)))$.

    Moreover, given $q_1 \in \POS(\ASC)$ and $q_2 \in \POS(\ASCb)$ so that $q_1(e_*) = q_2(e_*)$, there is a unique $q \in \POS(\ASC)$ so that $\phi_1^*(q) = q_1$ and $\phi_2^*(q) = q_2$.
    To see this, if we let $Y$ be a partial matrix representing $q_1$ and $Z$ be a partial matrix representing $q_2$, then we can define a partial matrix $X$ so that
    \[
        X_{i,j} = Y_{i,j} \text{ if }\{i,j\} \in \ASC,\text{ and}
    \]
    \[
        X_{i,j} = Z_{i,j} \text{ if }\{i,j\} \in \ASCb.
    \]
    Note that there is no ambiguity here because if $\{i,j\} \in \ASC \cap \ASCb$, then $\{i,j\} = \{*\}$, and $Y_{*,*} = q_1(e_*) = q_2(e_*) = Z_{*,*}$.
    It can easily be seen that if $q$ is represented by $X$, then $q$ has the desired properties.

    Now, let $q$ be an extreme ray of $\POS(\ASC \oplus \ASCb)$.
    We can write $\phi_1^*(q)$ as the sum of linearly independent extreme rays of $\POS(\ASC)$, so that
    \[
        \phi_1^*(q) = r_1 + \dots + r_k,\text{ for extreme }r_1, \dots, r_k \in \POS(\ASC).
    \]

    Now, let $s_i = \frac{r_i(e_*)}{q(e_*)}\phi_2^*(q)$ for $i \in [k]$.
    Since we have that $s_i(e_*) = \frac{r_i(e_*)}{q(e_*)}\phi_2^*(q)(e_*) = r_i(e_*)$, we can define $q_i$ so that for each $i \in [k]$,
    \[
        \phi^*_1(q_i) = r_i, \text{ and}
    \]
    \[
        \phi^*_2(q_i) = s_i.
    \]

    If we sum the $q_i$, we see that
    \[
        \phi^*_1(\sum_{i = 1}^k q_i) = \sum_{i = 1}^k r_i = \phi^*_1(q), \text{ and}
    \]
    \[
        \phi^*_2(\sum_{i = 1}^k q_i) = \sum_{i = 1}^k s_i = \sum_{i = 1}^k \frac{r_i(e_*)}{q(e_*)}\phi_2^*(q) = \phi_2^*(q).
    \]
    Hence, $q = \sum_{i=1}^k q_i$.
    Since the $q_i$ are linearly independent, it is clear then that $k = 1$, and $\phi_1^*(q)$ spans an extreme ray of $\POS(\ASC)$.
    
    An identical argument shows that $\phi^*_2(q)$ spans an extreme ray of $\POS(\ASCb)$.

    Thus, both  $\phi_1^*(q)$ and $\phi_2^*(q)$ are extreme rays of $\POS(\ASC)$ and $\POS(\ASCb)$ respectively, and in particular, we can use the extreme local ranks of these complexes to bound the local ranks of these quadratics.

    Combining these observations, we obtain
    \[
        \locr(q) = \max(\locr(\phi_1^*(q)), \locr(\phi_2^*(q))) \le  \max(\elocr(\ASC), \elocr(\ASCb)).
    \]
\end{proof}

We can collect our existing results on locally rank 1 complexes in one theorem.
\begin{thm}\label{thm:sufficient_rank_1}
    $\EXSimp_1$ contains all thickened graphs, and is closed under the 1-sum and cone operations.
\end{thm}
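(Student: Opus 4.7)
The plan is to assemble this statement directly from three results that have already appeared earlier in the paper, since each of the three claims corresponds to one of them. The theorem really functions as a summary collecting the closure properties of the class $\EXSimp_1$.

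First, for containment of thickened graphs in $\EXSimp_1$, I would simply invoke Theorem \ref{thm:thickened_graphs_rank_1}, which asserts that any thickened graph has extreme local rank at most $1$. That gives the first clause immediately.

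Second, for closure under $1$-sums, let $\ASC, \ASCb \in \EXSimp_1$. Then by definition $\elocr(\ASC) \le 1$ and $\elocr(\ASCb) \le 1$. Applying Theorem \ref{thm:1sum} yields
\[
\elocr(\ASC \oplus \ASCb) = \max(\elocr(\ASC), \elocr(\ASCb)) \le 1,
\]
so $\ASC \oplus \ASCb \in \EXSimp_1$.

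Third, for closure under the cone operation, let $\ASC \in \EXSimp_1$. The corollary following Theorem \ref{thm:extremeRayCone} (which identifies the non-sum-of-squares extreme rays of $\POS(\hat{\ASC})$ with the extreme rays of the face $\{q \in \POS(\hat{\ASC}) : q(e_*) = 0\} \cong \POS(\ASC)$ via $\iota^*$, and observes that $\iota^*$ preserves local rank) gives $\elocr(\hat{\ASC}) = \elocr(\ASC) \le 1$, so $\hat{\ASC} \in \EXSimp_1$. There is no real obstacle here; the theorem is an assembly statement, and all of the technical content has been discharged by the earlier results.
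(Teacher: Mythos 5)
Your proposal is correct and matches the paper's treatment exactly: the paper presents this theorem with no proof, introducing it with the remark that it ``collect[s] our existing results,'' and the three clauses follow directly from Theorem \ref{thm:thickened_graphs_rank_1}, Theorem \ref{thm:1sum}, and the corollary to Theorem \ref{thm:extremeRayCone}, just as you cite them.
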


\subsection{Classification of Locally Rank 1 Extreme Rays}
The locally rank 1 extreme rays of any complex $\ASC$ are of a fairly simple type.
We will see that in fact, that they are in some ways connected to the simplicial cohomology of $\ASC$ and its subcomplexes.

Firstly, it is worth pointing out that if $q$ is locally rank 1, and $\ASC$ is a connected complex, then $q$ is an extreme ray.
This follows from the fact that rank 1 PSD matrices are extreme inside the set of all PSD matrices.

Recall that if $q \in \POS(\ASC)$ is a complex, and $X$ is a partial matrix representing $q$, then the support of $q$ is 
\[
    \{i \in \VER{\ASC} : X_{ii} > 0\}
\]
Recall that $q \in \POS(\ASC)$ is of full support if $q(e_i) > 0 $ for all $i \in \VER{\ASC}$.
Recall that $q_1$ and $q_2$ are diagonally congruent if there is a diagonal linear transformation $D$ so that the induced map of $D : \mathcal{V}(\ASC) \rightarrow \mathcal{V}(\ASC)$ has the property that $\POS(D)(q_1) = q_2$.
If $D$ is such a diagonal linear transformation, we will use $D^*$ to denote the induced map $\POS(D)$.

Note that if $q$ is some quadratic represented by a partial matrix $X$, then $D^*(q)$ is represented by a matrix $D^*(X)$ so that
\[
    D^*(X)_{i,j} = D_{i,i}D_{j,j}X_{i,j}.
\]

Let $K_1$ denote the set of all orbits of locally rank 1 forms in $\POS(\ASC)$ of full support, treated as a semigroup under the Hadamard product.

We will refer to \cite{Hatcher} as a reference for the definition of simplicial cohomology.

\begin{thm}[Classification of Locally Rank 1 Extreme Rays]\label{thm:classify_lrk1_ext}
    The semigroup $K_1$ is isomorphic to the simplicial cohomology group $H^1(\ASC, \Z/2)$.
\end{thm}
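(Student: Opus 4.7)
The plan is to produce a canonical representative for each orbit in $K_1$ and identify the resulting combinatorial data with $1$-cocycles of $\ASC$ with $\Z/2\Z$ coefficients. Given a full-support locally rank $1$ form $q$ represented by a partial matrix $X$, the positive diagonal matrix $D$ with $D_{ii} = X_{ii}^{-1/2}$ produces a representative of the orbit $[q]$ with all diagonal entries equal to $1$. For such a normalized representative, the rank $1$ PSD condition on the two-vertex face $\{i,j\}$ forces $X_{ij}^2 = X_{ii}X_{jj} = 1$, so the data reduces to a function $\sigma \in C^1(\ASC, \Z/2\Z)$, where we identify $\Z/2\Z$ multiplicatively with $\{\pm 1\}$.

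Next I would characterize which $\sigma$ arise. On a triangular face $\{i,j,k\} \in \ASC$, rank $1$ of the normalized $3 \times 3$ matrix is equivalent to $\sigma(\{i,j\})\sigma(\{j,k\})\sigma(\{i,k\}) = 1$, which is precisely the simplicial $1$-cocycle condition at this $2$-simplex. For a face $F$ of size $\geq 4$, the cocycle conditions on all triangles in $F$ already force rank $1$ on $F$: fix $i_0 \in F$, set $v_{i_0} = 1$ and $v_i = \sigma(\{i_0, i\})$ for $i \neq i_0$, and the cocycle relation applied to $\{i_0, i, j\}$ gives $\sigma(\{i,j\}) = v_i v_j$, so that $X|_F = vv^{\intercal}$. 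Thus the rank $1$ condition on every face of $\ASC$ is equivalent to the cocycle condition on every triangle of $\ASC$, giving a bijection between normalized representatives of $K_1$ and $Z^1(\ASC, \Z/2\Z)$.

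I would then identify the residual diagonal symmetry acting on normalized representatives. A diagonal matrix $D$ preserving the normalization must satisfy $D_{ii}^2 = 1$, so $D_{ii} = \epsilon_i \in \{\pm 1\}$; the induced action on cocycles is $\sigma(\{i,j\}) \mapsto \epsilon_i \epsilon_j \sigma(\{i,j\})$, which is exactly multiplication by the coboundary $d\epsilon$ of the $0$-cochain $\epsilon \in C^0(\ASC, \Z/2\Z)$. Passing to orbits yields $Z^1(\ASC,\Z/2\Z)/B^1(\ASC,\Z/2\Z) = H^1(\ASC, \Z/2\Z)$. Finally, the Hadamard product of rank $1$ matrices is rank $1$ via $(uv^{\intercal}) \star (u'v'^{\intercal}) = (u \star u')(v \star v')^{\intercal}$, so $K_1$ is closed under $\star$ and preserves full support; on normalized representatives this product is the pointwise product of cocycles, matching the group operation in $H^1(\ASC, \Z/2\Z)$.

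The main subtlety I anticipate is the reconstruction step in the second paragraph, namely verifying that rank $1$ on arbitrarily large faces is controlled by the triangle cocycle condition rather than simply invoking that $H^1$ only sees the $2$-skeleton; the explicit basepoint argument handles this cleanly. The rest amounts to formal bookkeeping between the multiplicative presentation $\{\pm 1\}$ of $\Z/2\Z$, the torus action on partial matrices, and the Hadamard product.
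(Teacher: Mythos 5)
Your proposal is correct and follows essentially the same route as the paper's proof: normalize via positive diagonal congruence to force unit diagonal, observe that locally rank $1$ on edges gives $\pm 1$ entries, extract the cocycle condition from the $3\times3$ determinant, show the basepoint argument $X|_F = vv^{\intercal}$ handles larger faces, and identify the residual $\{\pm 1\}$-diagonal action with coboundaries. The paper presents the same steps (it phrases the large-face reconstruction as a diagonal congruence to the all-ones matrix rather than the explicit $vv^{\intercal}$ factorization, but this is the same computation).
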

\begin{proof}
    Let $q$ be a locally rank 1 extreme ray of $\POS(\ASC)$ with full support.

    We say that $q$ is normalized if $q(e_i) = 1$ for all $i \in \VER{\ASC}$.
    It is not hard to see that by performing a diagonal congruence, every $q \in \POS(\ASC)$ with full support is congruent to a normalized quadratic form in $\POS(\ASC)$.
    Also, it is not hard to see that normalized, locally rank 1 forms in $\POS(\ASC)$ form a semigroup under the Hadamard product.

    Let $X$ be a partial matrix representing a normalized $q$, so that $X_{ii} = 1$ for each $i \in \VER{\ASC}$.
    Moreover, for all $\{i,j\} \in \EDG{\ASC}$, we have that $X|_{\{i,j\}}$ is rank 1, so that
    \[
        X_{i,j}^2 = X_{ii} X_{jj} = 1.
    \]
    Therefore, $X_{i,j} \in \{-1, 1\}$. Say that $\sigma$ is the group isomorphism between $\{-1,1\}$ (as a multiplicative group) with $\Z / 2\Z$.
    
    We can assign to each normalized, locally rank 1, $q \in \POS(\ASC)$ an element of $\Z / 2\Z^{\EDG{\ASC}}$, namely, the function $f_X$ which assigns
    \[
        f_X(\{i,j\}) = \sigma(X_{i,j}).
    \]
    It is not hard to see that this defines a map of semigroups, since the Hadamard product is defined entry-wise.

    To construct our desired isomorphism, we need to show a few things: firstly, $f_X$ should be a cocycle, secondly, if $X_1$ and $X_2$ represent normalized, locally rank 1 forms in $\POS(\ASC)$, and $X_1$ and $X_2$ are diagonally congruent, then $f_{X_1}$ and $f_{X_2}$ should differ by a coboundary.
    Lastly, we need to show that the map sending $X$ to $f_X$ is in fact bijective.

    To check that $f_X$ is in fact a cocycle, we need to check that for any triangle $\{i,j,k\} \in \stskel 2 \ASC$, $f(\{i,j\}) + f(\{j,k\}) + f(\{i,k\}) = 0$, or equivalently that $X_{i,j}X_{j,k}X_{i,j} = 1$.
    To see this, simply note that the submatrix $X|_{\{i,j,k\}}$ is PSD, and its determinant is
    \[
        \det \begin{pmatrix} 1 & X_{i,j} & X_{i,k}\\ X_{i,j} & 1 & X_{j,k} \\ X_{i,k} & X_{j,k} & 1\end{pmatrix} = 1 + 2X_{i,j}X_{i,k}X_{j,k} - X_{i,j}^2 - X_{j,k}^2 - X_{i,k}^2 = 0.
    \]
    Noting that $X_{i,j}^2 = 1$ for all $\{i,j\} \in \stskel 1 \ASC$, we have that this implies that $X_{i,j}X_{i,k}X_{j,k} = 1$, as desired.

    On the other hand, suppose that $X_1$ and $X_2$ are diagonally congruent.
    Say that $D$ is this diagonal linear transformation so that $D^*(X_1) = D^*(X_2)$.

    We have that since $D^*(X_1)_{i,i} = D_{i,i}^2(X_1)_{i,i} = (X_2)_{i,i}$, we have that $D_{i,i}^2=1$, so that $D_{i,i} \in \{-1,1\}$.
    The equation $(X_1)_{i,j} = D_{i,i}D_{j,j}(X_2)_{i,j}$ then implies that
    \[
        f_{X_1}(\{i,j\}) = f_{X_2}(\{i,j\}) + \sigma(D_{i,i}) + \sigma(D_{j,j}).
    \]
    We see that function $g(\{i,j\}) = \sigma(D_{i,i}) + \sigma(D_{j,j})$ is a coboundary, so we have shown that $f$ in fact descends to a map from diagonal congruence classes to cohomology classes.

    Thus, the map sending $X$ to $f_X$ defines a semigroup homomorphism from $K_1$ to $H^1(\ASC, \Z / 2\Z)$.

    Finally, we need to show that this map is bijective. 
    Let $f \in \Z / 2\Z^{\EDG{\ASC}}$ be a coboundary.
    Let $X$ be the partial matrix so that $X_{i,i} = 1$ for $i \in \VER{\ASC}$, and for $\{i,j\} \in \EDG{\ASC}$,
    \[
        X_{i,j} = \sigma^{-1}(\{i,j\}).
    \]
    It is clear that this is well defined in the sense that if $f, g$ differ by a cocycle,  then the corresponding partial matrices are related by a diagonal congruence.

    To check that this represents a locally rank 1 quadratic form in $\POS(\ASC)$, we need to check that for all faces $F \in \ASC$, $X|_F$ is rank 1 and PSD.
    It is clear that for all $\{i,j\} \in \skel 1 \ASC$, $X|_{\{i,j\}}$ is rank 1 and PSD.
    On the other hand, if $\{i,j,k\} \in \stskel 2 \ASC$, then consider $X|_{\{i,j,k\}}$.
    To see that $X|_{\{i,j,k\}}$ is rank 1 and PSD, it suffices to show that all of the $2\times 2$ and $3\times 3$ minors are 0.
    It is clear that the $2\times 2$ minors of $X|_{\{i,j,k\}}$ are 0, and the determinant of $X|_{\{i,j,k\}}$ is 0 because of the cocycle condition.

    On the other hand, if $F$ is a general face of $\ASC$, then $X|_F$ is a symmetric matrix where all of the $2\times 2$ and $3\times 3$ minors are 0.
    Choosing some $i \in F$, and then by applying an appropriate diagonal congruence to $X|_F$, we may assume that $(X|_F)|_{i,j} = 1$ for all $j \in F$ without changing the signature of $X$.
    Since we have that for any $j,k \in F$, $X_{i,j}X_{i,k}X_{j,k} = 1$, so that $X_{j,k} = 1$, and thus, $X|_F$ is the all ones matrix (up to diagonal congruence), and so $X|_F$ is rank 1 and PSD.
\end{proof}

\section{Strongly Connected Maps}
Recall the definition of a strongly connected map.
\begin{definition}[Strongly Connected Map]\label{def:strong_connected_map}
We say a surjective map $\phi:\ASC \rightarrow \ASCb$ is \textbf{strongly connected} if it satisfies the following two properties:
\begin{enumerate}
    \item If $a, b \in \VER \ASC$, and $\phi(a) = \phi(b) = c \in \VER{{\ASCb}}$, then there is a sequence $a = x_1 ,x_2,x_3,\dots, x_k = b \in \VER \ASC$ so that $\phi(x_i)=c$ and $\{x_i, x_{i+1}\} \in \ASC$ for each $i \in [k-1]$.
    \item If $a, b \in \EDG{\ASC}$, and $\phi(a) = \phi(b) = c \in {\stskel 1 {\ASCb}}$, then there is a sequence $a = e_1 ,e_2,e_3,\dots, e_k = b \in \EDG{\ASC}$ so that $\phi(e_i) = c$, $e_i \cup e_{i+1} \in \ASC$, and $e_i \cap e_{i+1} \neq \varnothing$ for each $i \in [k-1]$.
\end{enumerate}
\end{definition}

\begin{thm}\label{thm:strong_conn}
    If $\phi : \ASC \rightarrow \ASCb$ is strongly connected, then the image of $\inphi$ is a face of $\POS(\ASC)$.
\end{thm}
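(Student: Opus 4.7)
The plan is to use Lemma~\ref{lem:image_of_inphi}, which characterizes the image of $\phi^*$ as the set of $r \in \POS(\ASC)$ represented by partial matrices $X$ that are \emph{fiber-constant}: $X_{i,j} = X_{k,\ell}$ whenever $\phi(\{i,j\}) = \phi(\{k,\ell\})$. The image is automatically a convex subcone, so to prove it is a face it suffices to show that if $X = Y + Z$ with $X$ fiber-constant and $Y, Z$ representing forms in $\POS(\ASC)$, then $Y$ (and hence $Z$) is fiber-constant. There are two cases to check, corresponding to the two clauses of Definition~\ref{def:strong_connected_map}.

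The vertex clause handles the case $\phi(i) = \phi(j)$. Suppose $\phi(a) = \phi(b) = c$, and let $a = x_1, \dots, x_k = b$ be the path supplied by condition~(1). For each $i$, fiber-constancy of $X$ forces $X|_{\{x_i, x_{i+1}\}}$ to equal $X_{c,c}\,\mathbf{1}\mathbf{1}^\intercal$, a rank-one PSD matrix. Since rank-one PSD matrices span extreme rays of the PSD cone, the decomposition $X|_{\{x_i, x_{i+1}\}} = Y|_{\{x_i, x_{i+1}\}} + Z|_{\{x_i, x_{i+1}\}}$ forces $Y|_{\{x_i, x_{i+1}\}} = \alpha_i X|_{\{x_i, x_{i+1}\}}$ for some $\alpha_i \in [0,1]$. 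Hence $Y_{x_i, x_i} = Y_{x_{i+1}, x_{i+1}} = Y_{x_i, x_{i+1}}$, and chaining along the path yields the fiber-constancy of $Y$ on all pairs whose $\phi$-image is a single vertex.

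The edge clause handles the case $\phi(\{a,b\}) = \phi(\{a',b'\}) = \{c,d\}$ with $c \neq d$. Apply condition~(2) to obtain a sequence of edges $e_1 = \{a,b\}, \dots, e_k = \{a',b'\}$ all mapping to $\{c,d\}$, with consecutive edges $e_i, e_{i+1}$ meeting at a common vertex and spanning a $2$-face of $\ASC$. Writing such a pair as $e_i = \{u,v\}$ and $e_{i+1} = \{u,w\}$, the previous paragraph (applied to the edge $\{v,w\} \in \ASC$, whose endpoints share a $\phi$-image) gives $Y_{v,v} = Y_{v,w} = Y_{w,w}$. In any Gram factorization $Y|_{\{u,v,w\}} = G(\eta_u, \eta_v, \eta_w)$, which exists because $Y|_{\{u,v,w\}}$ is PSD, one then has
\[
    \|\eta_v - \eta_w\|^2 \;=\; Y_{v,v} - 2\, Y_{v,w} + Y_{w,w} \;=\; 0,
\]
so $\eta_v = \eta_w$ and thus $Y_{u,v} = \langle \eta_u, \eta_v\rangle = \langle \eta_u, \eta_w \rangle = Y_{u,w}$. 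Iterating along the edge sequence yields $Y_{a,b} = Y_{a',b'}$.

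The main obstacle is the edge case: the submatrix $X|_{\{u,v,w\}}$ is typically rank two, not rank one, so one cannot directly invoke extremality of rank-one PSD matrices to control its decomposition into $Y|_{\{u,v,w\}} + Z|_{\{u,v,w\}}$. The Gram-vector argument circumvents this by first using the vertex case on the $\{v,w\}$ block to pin down the relevant equalities there, and then exploiting the rigidity of Gram matrices (two Gram vectors agreeing on self- and cross-inner products are literally equal) to propagate the equality to the entries $(u,v)$ and $(u,w)$.
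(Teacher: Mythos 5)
Your proof is correct and essentially identical to the paper's: both reduce to Lemma~\ref{lem:image_of_inphi}, establish the two local equalities (the rank-one extremality argument on a $2\times 2$ block for the vertex clause, and the Gram-vector/Cauchy--Schwarz rigidity argument on a $3\times 3$ block for the edge clause), and then chain them along the paths provided by the two conditions of strong connectivity. The paper factors the two local steps out as Lemmas~\ref{lem:edge_equation} and~\ref{lem:triangle_equation}, but the content is the same as what you wrote.
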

\begin{proof}
    Let $q \in \POS(\ASC)$, and suppose that $\inphi(q) = q_1 + q_2$, for some $q_1, q_2 \in \POS(\ASC)$.
    Our goal is to show that $q_1$ must be in $\im \inphi$.

    Let $X, Y$ be partial matrices representing $\inphi(q)$ and $q_1$ respectively.

    Our goal will be to apply lemma \ref{lem:image_of_inphi}.
    For this, we would need to show that if $\{a,b\}, \{c,d\} \in \skel 1 {\ASC}$, and $\phi(\{a,b\}) = \phi(\{c,d\})$, then 
    \[
        Y_{a,b} = Y_{c, d}. \tag{1}
    \]
    The strongly connected hypothesis, together with the following two lemmas (proven afterwards) will imply equation 1 in the relevant cases.

    \begin{lemma}\label{lem:edge_equation}
        Suppose that $a,b \in \VER{{\ASC}}$, $\{a,b\} \in \ASC$, and $\phi(a) = \phi(b) \in \VER{{\ASCb}}$.
        Then $Y_{a,a} = Y_{a,b} = Y_{b,b}$.
    \end{lemma}
    \begin{lemma}\label{lem:triangle_equation}
        Suppose that $a,b,c \in \VER{{\ASC}}$, $\{a,b,c\} \in \ASC$, and $\phi(b) = \phi(c) \in \VER{{\ASCb}}$.
        Then, $Y_{a,b} = Y_{a,c}$.
    \end{lemma}
    
    To show that equation 1 holds whenever $\phi(\{a,b\}) = \phi(\{c,d\})$, we divide this into the cases that $\phi(\{a,b\}) = \phi(\{c,d\}) \in \VER{{{\ASC}}}$, and $\phi(\{a,b\}) = \phi(\{c,d\}) \in \EDG{{{\ASC}}}$.

    \textbf{Case 1: }Suppose that $\phi(\{a,b\}) = \phi(\{c,d\}) \in \VER{{{\ASC}}}$.
    Note that this implies that $\phi(a) = \phi(b)$, so that $X_{a,a} = X_{a,b} = X_{b,b}$ by lemma \ref{lem:edge_equation}, and identically, $X_{c,c} = X_{c,d} = X_{d,d}$.
    So, it remains to show that $X_{a,a} = X_{c,c}$ to complete this chain of equations.

    By property (1) of being strongly connected, there is a sequence 
    \[
        a = x_1, x_2, \dots, x_k = c,
    \]
    so that $\{x_i, x_{i+1}\} \in \ASC$ and $\phi(x_i) = \phi(a) = \phi(c) \in \VER{{\ASCb}}$, for each $i \in [k-1]$.
    Applying lemma \ref{lem:edge_equation}, we have that for each $i$,
    \[
        Y_{x_i, x_{i}} = Y_{x_{i}, x_{i+1}} = Y_{x_{i+1}, x_{i+1}}.
    \]
    Chaining these equalities together, we obtain that $Y_{a,b} = Y_{c,d}$, satisfying this case for lemma \ref{lem:image_of_inphi}.

    \textbf{Case 2: }Suppose that $\phi(\{a,b\}) = \phi(\{c,d\}) \in \EDG{{\ASCb}}$.
    By property 2 of being strongly connected, there is a sequence
    \[
        \{a,b\} = e_1, e_2, \dots, e_k = \{c,d\},
    \]
    so that $\phi(e_i) = \phi(\{a,b\}) = \phi(\{c,d\})$, $e_i \cup e_{i+1} \in \ASC$, and $e_i \cap e_{i+1} \neq \varnothing$.

    We see then that $e_i \cup e_{i+1} \in \skel 3 \ASC$, and so there is some $\{x_i, y_i, z_i\} \in \ASC$ so that $e_i = \{x_i, y_i\}$ and $e_{i+1} = \{x_i, z_i\}$.
    The fact that $\phi(e_i) = \phi(e_{i+1})$ then implies that $\phi(y_i) = \phi(z_i)$, so we can apply lemma  \ref{lem:triangle_equation} to assert that 
    \[
        Y_{x_i, y_i} = Y_{x_i, z_i}. 
    \]
    Since $\{a,b\} = \{x_1, y_1\}$ and $\{c,d\} = \{x_{k-1}, z_{k-1}\}$, by chaining these equations together from $i = 1$ up to $i = k-1$, we obtain that
    \[
        Y_{a,b} = Y_{c,d}.
    \]
    As we desired.

    We see then that $Y$ represents a form in $\im \inphi$, and an identical argument shows that $Z$ also represents a form in $\im \inphi$.
    Therefore, $\im \inphi$ is a face of $\POS(\ASC)$.
\end{proof}

\begin{proof}(of lemma \ref{lem:edge_equation})

    Recall our state, where $X$ represents a quadratic form $\inphi(q)$, and $X = Y + Z$, where $Y, Z$ both represent forms in $\POS(\ASC)$.

    Let $A$ be a partial matrix representing $q$, so that for $\{a,b\} \in \ASC$,
    \[
        X_{a,b} = A_{\phi(a), \phi(b)}.
    \]

    If we have that $\phi(\{a,b\}) = i \in \VER{\ASC}$, then
    \[
        X|_{\{a,b\}} =
        \begin{pmatrix}
            X_{a,a} & X_{a,b}\\
            X_{a,b} & X_{b,b}
        \end{pmatrix} =
        \begin{pmatrix}
            A_{\phi(a),\phi(a)} & A_{\phi(a),\phi(b)}\\
            A_{\phi(a),\phi(b)} & A_{\phi(b),\phi(b)}
        \end{pmatrix} = 
        \begin{pmatrix}
            A_{i,i} & A_{i,i}\\
            A_{i,i} & A_{i,i}
        \end{pmatrix}.
    \]
    We see that $X|_{\{a,b\}}$ is rank 1, so $X|_{\{a,b\}}$ is extreme in the cone of $2\times 2$ PSD matrices.
    Since $X = Y + Z$, $X|_{\{a,b\}} = Y|_{\{a,b\}} + Z|_{\{a,b\}}$, and since $Y|_{\{a,b\}}$ is PSD, we have that $Y|_{\{a,b\}}$ is a multiple of $X|_{\{a,b\}}$.
    In particular, since $X_{a,a} = X_{a,b} = X_{b,b}$, we have that $Y_{a,a} = Y_{a,b} = Y_{b,b}$.
\end{proof}

\begin{proof}(of lemma \ref{lem:triangle_equation})

    Recall our state, where $X$ represents a quadratic form $\inphi(q)$, and $X = Y + Z$, where $Y, Z$ both represent forms in $\POS(\ASC)$.

    Let $A$ be a partial matrix representing $q$, so that for $\{a,b\} \in \ASC$,
    \[
        X_{a,b} = A_{\phi(a), \phi(b)}.
    \]

    For $\{a,b,c\} \in \ASC$ so that $\phi(a) = e$, and $\phi(b) = \phi(c) = f$, we have that
    \[
        X|_{\{a,b,c\}} =
        \begin{pmatrix}
            X_{a,a} & X_{a,b} & X_{a,c}\\
            X_{a,b} & X_{b,b} & X_{b,c}\\
            X_{a,c} & X_{b,c} & X_{c,c}\\
        \end{pmatrix} = 
        \begin{pmatrix}
            A_{e,e} & A_{e,f} & A_{e,f}\\
            A_{e,f} & A_{f,f} & A_{f,f}\\
            A_{e,f} & A_{f,f} & A_{f,f}
        \end{pmatrix}.
    \]
    Notice that $X|_{\{b,c\}}$ is rank 1, and since $X|_{\{b,c\}} = Y|_{\{b,c\}} + Z|_{\{b,c\}}$, $Y|_{\{b,c\}}$ is a multiple of $X|_{\{b,c\}}$.

    Now, we know that $Y|_{\{a,b,c\}}$ is a $3\times 3$ PSD matrix, where $Y_{b,b}=Y_{b,c} = Y_{c,c}$, so that we can write
    \[
        Y|_{\{a,b,c\}} =
        \begin{pmatrix}
            Y_{a,a} & Y_{a,b} & Y_{a,c}\\
            Y_{a,b} & Y_{b,b} & Y_{b,b}\\
            Y_{a,c} & Y_{b,b} & Y_{b,b}
        \end{pmatrix},
    \]

    To see that this implies that $Y_{a,b} = Y_{a,c}$, use the Gram matrix characterization so that $Y|_{\{a,b,c\}} = G(v_a, v_b, v_c)$, then the fact that $Y_{b,b} = Y_{b,c} = Y_{c,c}$ implies that $\|v_b\|^2 = \langle v_b, v_c\rangle = \|v_c\|^2$, which by Cauchy-Schwarz implies that $v_b = v_c$. 
    In particular, $Y_{a,b} = \langle v_a, v_b\rangle = \langle v_a, v_c \rangle = Y_{a,c}$, as was desired.

\end{proof}

Let us make a few remarks about this theorem.
Because we assumed that $\phi$ is surjective in the definition of strongly connected maps, $\phi^*$ is injective.
Therefore, if $q$ spans an extreme ray of $\POS(\ASCb)$, and $\phi : \ASC \rightarrow \ASCb$ is strongly connected, then $\phi^*(q)$ spans an extreme ray.
Also, since $\phi$ is surjective, $\phi^*$ preserves local rank, so that if $q$ is an extreme ray of $\POS(\ASCb)$ with local rank $\elocr(\ASCb)$, $\phi^*(q)$ spans an extreme ray of $\POS(\ASC)$ so that $\locr(\phi^*(q)) = \elocr(\ASCb)$.

\begin{cor}
    If $\phi : \ASC \rightarrow \ASCb$ is a strongly connected map, then $\elocr(\ASC) \ge \elocr(\ASC)$.
\end{cor}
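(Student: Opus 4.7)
The plan is to exhibit an extreme ray of $\POS(\ASC)$ whose local rank matches $\elocr(\ASCb)$, by pulling back an optimal extreme ray of $\POS(\ASCb)$ along $\phi^*$. The three properties I need for such a pullback are: it stays extreme, it keeps the same local rank, and it does not accidentally become sum-of-squares. Two of these follow almost immediately from Theorem \ref{thm:strong_conn} and surjectivity of $\phi$; the third is the genuine piece of work.

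First, I would dispose of the case when $\ASCb$ is chordal, where $\elocr(\ASCb) = 0$ by definition and the inequality is vacuous. Otherwise, pick $q \in \POS(\ASCb) \setminus \SOS(\ASCb)$ spanning an extreme ray with $\locr(q) = \elocr(\ASCb) \ge 1$. Since $\phi$ is surjective, $\phi^*$ is injective, and Theorem \ref{thm:strong_conn} says $\im\phi^*$ is a face of $\POS(\ASC)$. Hence $\phi^*$ restricts to a linear isomorphism of $\POS(\ASCb)$ onto the face $\im\phi^*$, carrying extreme rays to extreme rays of $\im\phi^*$, and those are extreme rays of $\POS(\ASC)$ because they live in a face. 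Thus $\phi^*(q)$ spans an extreme ray of $\POS(\ASC)$.

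Next I would verify $\locr(\phi^*(q)) = \locr(q)$. For one direction, given a face $F \in \ASCb$ achieving $\locr(q)$, surjectivity of $\phi$ on faces produces $T \in \ASC$ with $\phi(T) = F$; after passing to a subset we may take $\phi|_T$ to be a bijection, and then $\iota_T^*(\phi^*(q))$ corresponds to $\iota_F^*(q)$ under the induced relabeling, so the ranks agree. For the reverse direction, for any $T \in \ASC$, writing $\iota_{\phi(T)}^*(q)$ as a Gram matrix of vectors in some $k$-dimensional Hilbert space with $k = \locr(q)$, Lemma \ref{lem:ind_coor} expresses $\iota_T^*(\phi^*(q))$ as the Gram matrix of those same vectors (reindexed by $\phi$), whose rank is bounded by $k$.

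The main obstacle is showing $\phi^*(q) \notin \SOS(\ASC)$; this is where I must actually use the first clause of Definition \ref{def:restate_strong_connected_map} (connectivity of vertex fibers). Suppose for contradiction that $\phi^*(q)$ is SOS. Let $X$ represent $q$ and $Y$ represent $\phi^*(q)$, so $Y_{a,b} = X_{\phi(a),\phi(b)}$ by Lemma \ref{lem:ind_coor}. Take a PSD completion $\hat Y = G(v_a : a \in \VER{\ASC})$ in a Hilbert space $H$. Whenever $\{a,b\} \in \skel 1 \ASC$ with $\phi(a) = \phi(b) = c$, the $2\times 2$ submatrix reads $\langle v_a, v_b\rangle = \|v_a\|^2 = \|v_b\|^2 = X_{c,c}$, forcing $v_a = v_b$ by Cauchy--Schwarz. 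Using strong connectivity, any two preimages of $c$ are joined by such a chain of edges, so $v_a$ depends only on $\phi(a)$; set $u_c := v_a$ for any $a \in \phi^{-1}(c)$. For $\{c,d\} \in \skel 1 \ASCb$, surjectivity of $\phi$ on faces lets us pick $\{a,b\} \in \skel 1 \ASC$ with $\phi(\{a,b\}) = \{c,d\}$, giving $\langle u_c, u_d\rangle = \langle v_a, v_b \rangle = X_{c,d}$. Thus $G(u_c : c \in \VER{\ASCb})$ is a PSD completion of $X$, so $q \in \SOS(\ASCb)$, a contradiction.

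Combining the three steps, $\phi^*(q)$ is an extreme non-SOS ray of $\POS(\ASC)$ with $\locr(\phi^*(q)) = \locr(q) = \elocr(\ASCb)$, whence $\elocr(\ASC) \ge \elocr(\ASCb)$.
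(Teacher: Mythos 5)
Your proof is correct and follows the same overall strategy as the paper's terse remark preceding the corollary: pull back an extreme ray of $\POS(\ASCb)$ of maximal local rank along $\phi^*$, using Theorem \ref{thm:strong_conn} to preserve extremality and surjectivity of $\phi$ to preserve local rank. However, you supply a step that the paper skips entirely and that is genuinely necessary: verifying that $\phi^*(q) \notin \SOS(\ASC)$, since $\elocr$ is by definition a maximum only over non-SOS extreme rays. Your argument here — take a Gram representation $G(v_a : a \in \VER{\ASC})$ of a hypothetical PSD completion, use Cauchy--Schwarz on the $2\times 2$ blocks over fiber-edges to force $v_a = v_b$ whenever $\{a,b\}\in\ASC$ with $\phi(a)=\phi(b)$, propagate equality along fiber-paths via property (1) of strong connectivity, and descend to a PSD completion of $q$ on $\ASCb$ — is correct and not a mere formality. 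Surjectivity alone does not give this: for instance, two disjoint paths mapping onto a $4$-cycle give a surjective simplicial map under which the standard non-SOS nonnegative form on $C_4$ pulls back to a form on a chordal complex, hence SOS; that map fails the vertex-fiber connectivity condition, which is exactly what your argument exploits. Your local-rank-preservation argument mirrors the one in the proof of Lemma \ref{lem:local_rank_cover}, so no issue there. (Minor point: the corollary as printed has a typo on the right-hand side; as you correctly infer and as the earlier restatement Corollary \ref{cor:local_rank_sconn} shows, it should read $\elocr(\ASCb)$.)
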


The idea for this theorem stems from a proof of Laurent concerning the way that Gram dimension interacts with edge contraction, found in \cite{laurent2012gram}.
Though the Laurent's proof neither is implied by nor implies our results, it has a spiritually connected to our results by the following lemma (which we do not prove).
\begin{lemma}
    Let $G$ be a graph.
    Let $e$ be an edge of $G$ which is contained in no 4-cycles, and let $ G / e$ be the result of contracting the edge $e$ in $G$.
    $\chi(G)$ and $\chi(G/e)$ are the clique complexes of $G$ and $G/e$ respectively. 
    Then the natural quotient map $\phi^* : \chi(G) \rightarrow \chi(G/e)$ is strongly connected.
\end{lemma}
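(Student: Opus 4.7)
The plan is to verify surjectivity together with the two path-lifting conditions of Definition~\ref{def:strong_connected_map} for the natural simplicial map $\phi:\chi(G)\to\chi(G/e)$. Write $e=\{u,v\}$ and let $w$ denote the identified vertex of $G/e$, so that $\phi(u)=\phi(v)=w$ and $\phi$ fixes every other vertex. The two path-lifting conditions turn out to be essentially immediate from the fact that $\{u,v\}$ is itself an edge of $G$; the real content of the lemma lies in verifying that $\phi$ is surjective as a simplicial map, and that is where the no-4-cycle hypothesis enters.

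For condition (1) of Definition~\ref{def:strong_connected_map}, the only pair of distinct vertices of $\chi(G)$ identified by $\phi$ is $\{u,v\}$ itself, and the length-two sequence $u,v$ is valid because $\{u,v\}\in\EDG{\chi(G)}$. For condition (2), two distinct edges $a,b\in\EDG{G}$ with $\phi(a)=\phi(b)=c\in\stskel{1}{\chi(G/e)}$ must have the form $a=\{u,x\}$, $b=\{v,x\}$ with $x\notin\{u,v\}$---any other way two edges of $G$ could collide under $\phi$ would either leave $\phi(a)\ne\phi(b)$ or force $c=\{w\}$, which is not an edge. In this situation $\{u,x\}$, $\{v,x\}$, and $\{u,v\}$ are all edges of $G$, so $\{u,v,x\}$ is a triangle, hence a face of $\chi(G)$; then $a\cup b=\{u,v,x\}\in\chi(G)$ and $a\cap b=\{x\}\ne\varnothing$, so the two-term sequence $a,b$ witnesses condition (2).

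The main step is surjectivity: for each face $S\in\chi(G/e)$ one must produce a face of $\chi(G)$ mapping onto $S$. If $w\notin S$, then $S$ is a clique in $G$ untouched by $\phi$, so $S\in\chi(G)$ and $\phi(S)=S$. Otherwise write $S=\{w,x_1,\dots,x_k\}$ and set $A=\{x_i:\{u,x_i\}\in\EDG{G}\}$ and $B=\{x_i:\{v,x_i\}\in\EDG{G}\}$; since each $\{w,x_i\}$ is an edge of $G/e$, $A\cup B=\{x_1,\dots,x_k\}$. Suppose for contradiction that both $A\setminus B$ and $B\setminus A$ are nonempty, and pick $x_i\in A\setminus B$ and $x_j\in B\setminus A$. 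Then $\{u,x_i\},\{x_i,x_j\},\{x_j,v\},\{v,u\}$ are all edges of $G$, forming a 4-cycle on the vertices $u,x_i,x_j,v$ that contains $e$, contradicting the hypothesis. Hence one of $A,B$ equals $\{x_1,\dots,x_k\}$, so either $\{u,x_1,\dots,x_k\}$ or $\{v,x_1,\dots,x_k\}$ is a clique in $G$, giving the desired face of $\chi(G)$ mapping onto $S$.

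The main obstacle is this partition-and-4-cycle argument for surjectivity; once surjectivity is established, both path-lifting conditions reduce to the single observation that the presence of the edge $\{u,v\}$ in $G$ forces a triangle through any common neighbor of $u$ and $v$, trivializing the required connectivity.
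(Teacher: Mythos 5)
The paper states this lemma without proof---it is introduced with the parenthetical ``which we do not prove''---so there is no argument in the text to compare against; your proof fills that gap, and it is correct. The crux is the surjectivity step, which is exactly where the no-4-cycle hypothesis enters: for a clique $S\ni w$ in $G/e$, partitioning $S\setminus\{w\}$ into $A$ (neighbors of $u$) and $B$ (neighbors of $v$), one sees that $x_i\in A\setminus B$ and $x_j\in B\setminus A$ would give the 4-cycle $u,x_i,x_j,v$ through $e$; hence one of $A$, $B$ is all of $S\setminus\{w\}$, and correspondingly $\{u\}\cup(S\setminus\{w\})$ or $\{v\}\cup(S\setminus\{w\})$ is a clique of $G$ mapping onto $S$. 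The two path-lifting conditions are then short, as you observe: the only nontrivial vertex collision is $u\sim v$, witnessed by the edge $\{u,v\}$ itself, and the only nontrivial edge collision is $\{u,x\}\sim\{v,x\}$, witnessed by the triangle $\{u,v,x\}$ (which lies in $\chi(G)$ precisely because $\{u,v\}$ is an edge). One small remark worth making: since you choose $x_i\notin B$ and $x_j\notin A$, the 4-cycle you produce is in fact induced, so your argument proves the marginally stronger statement that the paper gives informally in its discussion of strongly connected maps (``as long as $e$ is not contained in any induced 4-cycles''), which reconciles the two slightly different phrasings appearing in the paper.
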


We will also see in our next section that this idea has some connections to algebraic topology.
\section{Maps to Spheres and Combinatorial Manifolds}
\begin{thm}[Maps to Spheres]\label{thm:map_to_sphere}
    Suppose that $\ASC$ is a simplicial complex, and $F$ is a facet of $\ASC$ of size $d+1 > 1$ with the following properties.
    (Use $F^c$ to denote $E-F$.)
    \begin{enumerate}
        \item For any $a, b \in F^c$, there is a sequence $x_1 ,\dots, x_k \in F^c$ so that $x_1 = a$, $x_k = b$ and $\{x_i, x_{i+1}\} \in \ASC$ for each $i \in [k-1]$.
        \item For each $c \in F$, and for any $a, b \in F^c$ so that $\{a,c\}, \{b,c\} \in \ASC$, there is a sequence $x_1 ,\dots, x_k \in S^c$ so that $x_1 = a$, $x_k = b$ and $\{x_i, x_{i+1}, c\} \in \ASC$ for each $i \in [k-1]$.
        \item For each $c \in F$, there is some $w \in F^c$ so that $F - c + w \in \ASC$.
    \end{enumerate}

    Then there is strongly connected map from $\ASC$ to $S_d$.
\end{thm}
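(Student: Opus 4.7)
The plan is to construct the map $\phi : \ASC \to S_d$ explicitly and then verify the two conditions of Definition~\ref{def:restate_strong_connected_map} one by one. Fix an enumeration $F = \{v_1, \ldots, v_{d+1}\}$ and define
\[
    \phi(v) = \begin{cases} i & \text{if } v = v_i \in F, \\ d+2 & \text{if } v \in F^c. \end{cases}
\]
My first step is to check that $\phi$ is a well-defined simplicial map from $\ASC$ to $S_d$, i.e.\ that $|\phi(G)| \le d+1$ for every $G \in \ASC$. If $F \not\subseteq G$ this is immediate since $|\phi(G)| \le |G \cap F| + 1 \le d + 1$; if $F \subseteq G$ then the facet hypothesis forces $G = F$ and $\phi(G) = [d+1]$.

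Next I would verify surjectivity, which is the only place where hypothesis (3) is really used. Any face $S \in S_d$ either avoids $d+2$, in which case the corresponding subset of $F$ is a face mapping to $S$, or contains $d+2$, in which case $T := S \setminus \{d+2\} \subsetneq [d+1]$ (because $|S| \le d+1$). Choose any $c \in F$ whose label lies outside $T$; by hypothesis (3) pick $w \in F^c$ with $F - c + w \in \ASC$, and then the subset of $F - c + w$ indexed by $T \cup \{w\}$ is a face of $\ASC$ mapping onto $S$.

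Now I would check strong connectivity, handling the two clauses separately. For clause (1), if $\phi(a) = \phi(b) = c \in \VER{S_d}$, the only nontrivial case is $c = d+2$, in which case $a, b \in F^c$ and hypothesis (1) supplies the required walk in the $1$-skeleton restricted to $F^c$. For clause (2), write $c \in \stskel 1 {S_d}$; edges of $\ASC$ lying inside $F^c$ are irrelevant since they map to a vertex of $S_d$, and edges inside $F$ are singletons in the fiber so the condition is trivial. The remaining case is $c = \{i, d+2\}$, where every edge in $\phi^{-1}(c)$ has the form $\{v_i, w\}$ with $w \in F^c$ and $\{v_i, w\} \in \ASC$. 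Given two such edges $\{v_i, w\}$ and $\{v_i, w'\}$, hypothesis (2) (applied with the cone vertex $c := v_i$) produces a sequence $w = w_1, \ldots, w_k = w'$ with $\{v_i, w_j, w_{j+1}\} \in \ASC$; then $e_j := \{v_i, w_j\}$ is the required chain of edges, since $e_j \cup e_{j+1}$ is the triangle $\{v_i, w_j, w_{j+1}\} \in \ASC$ and $e_j \cap e_{j+1} \supseteq \{v_i\}$.

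The main obstacle I anticipate is keeping the case analysis straight, especially making sure that hypothesis (3) is only needed for surjectivity (not for the clause (2) verification) and that edges lying entirely in $F^c$ do not contribute to clause (2). Once these cases are laid out, each bullet of the hypothesis matches precisely one bullet of the strong-connectivity definition, so no additional combinatorial lemma should be required.
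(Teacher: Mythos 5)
Your proposal is correct and follows essentially the same route as the paper: define the collapse map sending $F$ bijectively to $[d+1]$ and everything else to $d+2$, then verify simpliciality (via the facet hypothesis), surjectivity (via hypothesis (3)), and the two clauses of strong connectivity (via hypotheses (1) and (2) respectively). Your case analysis for clause (2) is, if anything, slightly more explicit than the paper's in separating out the edges lying entirely inside $F$ or entirely inside $F^c$, but the underlying argument is identical.
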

\begin{proof}
    Enumerate the elements of $F = \{x_1 ,\dots, x_{d+1}\}$.
    We consider the map 
    \[
        \phi : \ASC \rightarrow S_d
    \]
    which sends $x_i$ to $i \in \VER{S_d}$, and for any $x \not \in F$, we send $x$ to $d+2 \in \VER{S_d}$.

    To see that this map is simplicial, we need only check that for all $T \in \ASC$, $|\phi(T)| \le d+1$.
    This follows because $F$ is a facet; if we had that $|\phi(T)| = d+2$, then $T$ would have to strictly contain $F$.

    It is also surjective, because facets of $S_d$ are precisely the sets of the form $[d+2] - i$, for some $i \in [d+2]$.
    Now, note that if $i = d+2$, then $\phi(F) = [d+2] - i$, and for any other $i$, there exists some $w \in F^c$ so that $F - x_i + w \in \ASC$, and $\phi(F- x_i + w) = [d+2] - i$.

    We now need to check that $\phi$ satisfies properties (1) and (2) in the definition of strong connectivity.

    Firstly, note that if $a, b \in \VER{\ASC}$ where $a\neq b$, then $\phi(a) = \phi(b)$ if and only if $a, b \not \in F$.
    Then by property (2) of $\ASC$, we have that there is a sequence $a = x_1, \dots, x_k = b$ so that $\{x_i, x_{i+1}\} \in \ASC$, and $\phi(x_i) = \phi(b)$ for each $i \in [k-1]$.
    The existence of such path implies that the map $\phi$ satisfies property (1) of being strongly connected.

    Secondly, if we see that if $e, f \in \EDG{\ASC}$, then $\phi(e) = \phi(f)$ if and only if there is some $a,b,c$ so that $c \in F$, and $a, b \not \in F$, so that $e = \{a,c\}$ and $f = \{b,c\}$.
    In this case, by property (2) of the face $F$, we have a sequence $a = x_1, \dots, x_k = b$ so that $\{x_i, x_{i+1}, c\} \in \ASC$ for each $i \in [k-1]$.
    Again, this can easily be seen to imply that $\phi$ satisfies property (2) of being strongly connected.
\end{proof}

Recall that a combinatorial manifold of dimension $d$ is a connected simplicial complex where for each $i \in \VER{\ASC}$, the link $\lnk_i(\ASC)$ has a topological realization which is homemomorphic to a sphere of dimension $d-1$, and also $|\ASC|$ is homeomorphic to a closed topological manifold.
The following proof was inspired by a Math StackExchange comment\cite{stackoverflow_comment}.

Because our main goal is to highlight the usage of theorem \ref{thm:map_to_sphere}, we will not be entirely rigorous in our treatment of the topological aspects of the proof.
In particular, we will require the following topological fact, which we will assert without proof.
\begin{lemma}\label{lem:topological}
    If $M$ is a closed, connected manifold, and $F$ is a closed subset of $M$ which is homeomorphic to a Euclidean ball, then the set $M - F$ is connected.
\end{lemma}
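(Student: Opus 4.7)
The plan is to use a regular-neighborhood thickening of $F$ in $M$, exploiting that a ball is collapsible and therefore has a ball-shaped neighborhood whose boundary sphere is connected.

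Let $d = \dim M$. The low-dimensional cases are dispatched directly: if $d = 0$ then $F = M$ and there is nothing to prove, and if $d = 1$ then $M \cong S^1$ with $F$ a closed arc whose complement is an open arc. So assume $d \geq 2$. Produce a closed regular neighborhood $N$ of $F$ in $M$. Because $F$ is contractible (being a ball), it collapses to a point, and the regular neighborhood of such a collapsible subset of a PL manifold is itself a PL $d$-ball with $F \subset \mathrm{int}(N)$ and boundary sphere $\partial N \cong S^{d-1}$.

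Now write
\[
M \setminus F \;=\; (N \setminus F) \cup (M \setminus \mathrm{int}(N)),
\]
a union of two closed subsets of $M \setminus F$ whose intersection is exactly $\partial N$. Three observations then finish the argument. First, $N \setminus F$ deformation retracts onto $\partial N \cong S^{d-1}$, which is connected since $d \geq 2$, so $N \setminus F$ is connected. Second, every connected component $C$ of $M \setminus \mathrm{int}(N)$ must meet $\partial N$: otherwise $C$ would be simultaneously open and closed in $M$ (open because it lies in the interior of $M \setminus \mathrm{int}(N)$ as a subset of $M$, closed because it is a component of a closed set), forcing $C = M$ and contradicting the fact that $N$ is a proper subset of the closed manifold $M$ (a $d$-ball cannot coincide with a closed $d$-manifold for $d \geq 1$, since one has boundary and the other does not). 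Third, since $N \setminus F$ is connected and contains $\partial N$, attaching each component of $M \setminus \mathrm{int}(N)$ along their shared boundary yields a connected set, and all such attachments share $N \setminus F$, so $M \setminus F$ is connected.

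The main obstacle is Step 2 --- producing the regular neighborhood $N$ with the geometric properties asserted. In the combinatorial-manifold setting where the lemma is applied, $F$ arises as a PL subcomplex of a triangulation of $M$, and the existence of such an $N$ is exactly classical regular-neighborhood theory (see, e.g., Rourke--Sanderson). For a fully general topological embedding one would instead invoke Alexander duality with $\mathbb{Z}/2$-coefficients, yielding $\tilde H_0(M \setminus F; \mathbb{Z}/2) \cong \check H^{d-1}(F; \mathbb{Z}/2) = 0$ because $F$ is contractible.
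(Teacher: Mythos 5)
The paper does not actually prove this lemma: immediately before stating it, the authors write that they ``will require the following topological fact, which we will assert without proof,'' and indeed no argument appears. So there is no proof in the paper to compare your proposal against; what follows is an assessment of your argument on its own merits.

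Your regular-neighborhood argument is the right idea and works in the PL setting, which (as you correctly observe) is the only setting in which the paper applies the lemma, since $F$ there is the geometric realization of a simplex and hence a PL subcomplex. Two points should be corrected. First, the inference ``$F$ is contractible (being a ball), therefore it collapses to a point'' is not valid: contractibility does not imply collapsibility (the dunce hat is the standard counterexample). What you actually need is the specific fact that a PL $d$-ball collapses to a point, which is true, but is a statement about PL balls rather than a consequence of contractibility; that fact is what lets you cite the theorem that a regular neighborhood of a collapsible subcomplex is a ball. Second, the Alexander duality isomorphism you quote,
\[
\tilde H_0(M - F; \mathbb{Z}/2) \cong \check H^{d-1}(F; \mathbb{Z}/2),
\]
is the version for the ambient sphere $M = S^d$ and does not hold literally for a general closed $d$-manifold. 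The correct tool is Alexander--Lefschetz duality,
\[
\check H^{d-1}(F; \mathbb{Z}/2) \cong H_1(M, M-F; \mathbb{Z}/2),
\]
combined with the long exact sequence of the pair $(M, M-F)$: vanishing of $H_1(M,M-F;\mathbb{Z}/2)$ gives injectivity of $H_0(M-F;\mathbb{Z}/2) \to H_0(M;\mathbb{Z}/2)$, and surjectivity is automatic since $M$ is connected and $M - F$ is nonempty for $d \geq 1$. Neither issue destroys the argument, but both should be fixed before the proof is complete; with those repairs, and with a slightly more careful justification that a component of $M \setminus \operatorname{int}(N)$ disjoint from $\partial N$ is open in $M$ (it is open in the locally connected closed set $M\setminus\operatorname{int}(N)$ and contained in the open set $M \setminus N$), the proof is correct.
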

\begin{thm}
    If $\ASC$ is a combinatorial manifold, then there is a strongly connected map from $\ASC$ to $S_n$.
\end{thm}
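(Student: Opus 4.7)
My plan is to verify the three hypotheses of Theorem \ref{thm:map_to_sphere} for an arbitrary facet $F$ of $\ASC$. Since $\ASC$ is a combinatorial $d$-manifold, every facet has size $d+1$, so the size requirement is automatic (assuming $d \geq 1$; the $d=0$ case is trivial). The three hypotheses each turn out to be manifold-theoretic statements in disguise.

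Condition (3) is the pseudomanifold property: every $(d-1)$-face of $\ASC$ is contained in exactly two facets. This is a standard consequence of $|\ASC|$ being a closed $d$-manifold (the link of a $(d-1)$-face of a triangulated closed $d$-manifold is forced to be $S^0$, i.e., two points). Applied to the $(d{-}1)$-face $F-c$, one of the two facets containing it is $F$ itself, and the other has the form $F-c+w$ with $w \notin F$, which gives $w \in F^c$ as required.

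Condition (1) is the assertion that the 1-skeleton of the induced subcomplex $\ASC|_{F^c}$ is connected. The topological input is Lemma \ref{lem:topological}: since $|F|$ is a closed $d$-ball in the closed connected $d$-manifold $|\ASC|$, the complement $|\ASC|-|F|$ is connected. To pass from topological connectivity to combinatorial connectivity of the 1-skeleton, I will use the dual-graph connectivity of the pseudomanifold $\ASC$: given any $v \in F^c$, pick a facet $T_k$ containing $v$ and a sequence $F = T_0, T_1, \ldots, T_k = T_k$ of facets with $|T_i \cap T_{i+1}| = d$. I can assume $T_i \neq F$ for $i \geq 1$, so each $T_i$ for $i \geq 1$ contains a vertex in $F^c$, and by the pseudomanifold property each shared $(d{-}1)$-face $T_i \cap T_{i+1}$ for $i \geq 1$ likewise contains a vertex in $F^c$ (otherwise $T_i \cap T_{i+1} = F - c$ for some $c$, forcing one of $T_i, T_{i+1}$ to equal $F$). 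Choosing such vertices and using that any two vertices of a common simplex $T_i$ are adjacent in $\ASC$ yields a walk in $\ASC|_{F^c}$ from some $w_1 \in T_1 \cap F^c$ to $v$. Since all such walks land in the simplex $T_1$ (adjacent to $F$), fixing $T_1$ once and for all shows every vertex of $F^c$ is connected via the 1-skeleton of $\ASC|_{F^c}$ to some vertex of $T_1 \cap F^c$, and these are mutually adjacent; so $\ASC|_{F^c}$ is 1-skeleton connected.

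Condition (2) is handled by repeating the same argument one dimension down inside $\lnk_c(\ASC)$. The realization $|\lnk_c(\ASC)|$ is a $(d{-}1)$-sphere by the combinatorial-manifold hypothesis, and $F - c$ is a facet of $\lnk_c(\ASC)$ (since $F \in \ASC$). The condition $\{a,c\}, \{b,c\} \in \ASC$ says $a,b$ are vertices of $\lnk_c(\ASC)$, and $a,b \in F^c$ says they lie outside the facet $F-c$. Applying the same strategy as for condition (1) to the pair $(\lnk_c(\ASC), F-c)$ produces a walk from $a$ to $b$ in the 1-skeleton of $\lnk_c(\ASC)|_{F^c}$, and each edge of this walk together with $c$ is a face of $\ASC$, which is exactly the triangle condition. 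For $d = 1$ the link is $S^0$ with $F - c$ a single vertex, so there is at most one possible choice for $a$ and $b$ and they are equal, making the condition vacuous.

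The main technical obstacle is the topology-to-combinatorics bridge, since the manifold hypothesis is inherently topological while Theorem \ref{thm:map_to_sphere} requires a combinatorial path in a specific 1-skeleton. The key idea that resolves this is to lean on the pseudomanifold dual-graph connectivity (a purely combinatorial consequence of the manifold hypothesis) rather than trying to simplicially approximate topological paths; this sidesteps subtleties about regular neighborhoods of $|F|$ which are not themselves disjoint unions of open stars of its vertices.
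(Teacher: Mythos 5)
Your high-level strategy is the same as the paper's: verify the three hypotheses of Theorem~\ref{thm:map_to_sphere} for an arbitrary facet $F$. Your handling of condition (3) via the pseudomanifold property is fine and is essentially what the paper does (phrased there as a contradiction with $|\ASC|$ being a topological manifold). Where you diverge is on conditions (1) and (2): the paper takes a continuous path in $M-F$ (resp.\ in the link minus $F-c$) granted by Lemma~\ref{lem:topological} and passes to a simplicial approximation, whereas you attempt to stay combinatorial by walking through the dual graph of facets. That replacement is an appealing idea, since the paper itself flags that the simplicial-approximation step is not fully rigorous.

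However, there is a genuine gap at the step ``fixing $T_1$ once and for all.'' Dual-graph connectivity of the whole pseudomanifold, plus truncating after the last visit to $F$, gives you, for each $v\in F^c$, a walk in $\ASC|_{F^c}$ from $v$ to \emph{some} vertex $w_1 \in T_1\cap F^c$ where the facet $T_1$ adjacent to $F$ \emph{depends on $v$}. Two different $v$'s may yield two different facets $T_1, T_1'$ adjacent to $F$, and $T_1\cap F^c$ and $T_1'\cap F^c$ are single vertices that need not be adjacent. To conclude that $\ASC|_{F^c}$ has connected 1-skeleton you need one of: (i) the dual graph of $\ASC$ with $F$ deleted is connected, so that the second facet in the dual walk can always be taken to be your fixed $T_1$; or (ii) a separate argument that the vertices $\{T_1\cap F^c : T_1 \text{ adjacent to } F\}$ are mutually connected in $\ASC|_{F^c}$. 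Neither is established; (i) in particular is exactly the kind of statement that requires Lemma~\ref{lem:topological} (or some replacement) to justify, so you have not actually sidestepped the topology-to-combinatorics bridge as claimed --- you have relocated it. (A purely combinatorial fix via links of codimension-2 faces of $F$ being circles is plausible for $d \ge 2$, but that requires links of all faces to be spheres, which is stronger than the paper's definition that only constrains vertex links.) The same gap recurs in your treatment of condition (2), since it ``repeats the same argument one dimension down.''
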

\begin{proof}
    Fix any $F \in \ASC$. We want to show that $F$ satisfies the conditions of theorem \ref{thm:map_to_sphere}, which will imply the existence of the desired map.

    To show property (1), we let $M = |\ASC|$ be the topological realization of $\ASC$, and note that $F \subseteq M$ is closed and homeomorphic to a ball, so that by lemma \ref{lem:topological}, we have that $M - F$ is connected.
    Therefore, we have that $M - F$ is actually path connected, so that for any $a, b \in \VER{\ASC} - F$, there is a continuous path from $a$ to $b$ in $M$, $p$.
    Taking a simplicial approximation to $p$, we obtain a sequence of vertices in $\ASC$ so that $x_1, \dots, x_k \in F^c$ so that $x_1 = a$, $x_k = b$, and $\{x_i, x_{i+1}\} \in \ASC$, as desired.

    To show property (2), we can apply our assumption that for any $c \in \VER{\ASC}$, $N = |\lnk_c(\ASC)|$ is a topological sphere.
    Applying this to $c \in F$, we notice that $F-c$ is a face of $\lnk_c(\ASC)$.
    Once again applying lemma \ref{lem:topological} we see that $N - (F-c)$ is connected, and therefore, for any $a, b \in \VER{\ASC} - (F-c)$, we obtain a continuous path from $a$ to $b$ in $N - (F-c)$.
    Once again, taking a simplicial approximation of $p$ gives us a sequence of vertices $a = x_1, \dots, x_k = b \in \VER{\lnk_c(\ASC)} - (F-c)$ so that $\{x_i, x_{i+1}\} \in \lnk_c(\ASC)$.
    By definition of the link, this implies that $\{x_i, x_{i+1}, c\} \in \ASC$.
    Thus, for any $\{a,c\}, \{b,c\} \in \ASC$, where $a,b \in F^c$, we obtain $a = x_1, \dots, x_k = b \in F^c$ so that $\{x_i, x_{i+1}, c\} \in \ASC$ for all $i \in [k-1]$, showing property (2).

    To show property (3), let $c \in F$, and suppose that there are no vertices $w \in F^c$ so that $F-c + w \in \ASC$. 
    We see then that if we take a point in $x$ in the relative interior of the face $F - c \subseteq M$, then every sufficiently small neighborhood of $x$ which is homeomorphic to a half-space, and in particular, is not homeomorphic to a Euclidean ball.
    This contradicts the fact that $M$ is a topological manifold.
\end{proof}

\section{Acknowledgements}
We would like to acknowledge Greg Blekherman, Justin Chen and Daniel Minahan at the Georgia Institute of Technology for productive conversations.

\bibliographystyle{plain}
\bibliography{main}
\end{document}